\theoremstyle{plain}
\newtheorem{thm}{Theorem}[section]
\newtheorem{lem}[thm]{Lemma}
\newtheorem{prop}[thm]{Proposition}
\newtheorem{cor}[thm]{Corollary}
\theoremstyle{definition}
\newtheorem{defi}[thm]{Definition}
\newtheorem{exmp}[thm]{Example}
\newtheorem{rem}[thm]{Remark}
\newcommand{\order}[1]{\xrightarrow[{\raisebox{.4ex}[0pt][0pt]{$\scriptstyle #1$}}]{\; o \; }}
\newcommand{\R}{\mathbb{R}}
\newcommand{\N}{\mathbb{N}}
\title{Bipolar Theorems for Sets of Non-negative Random Variables\footnote{funded by the Deutsche Forschungsgemeinschaft (DFG, German Research Foundation) – 471178162}}
\author{Johannes Langner\footnote{johannes.langner@insurance.uni-hannover.de} \, \& Gregor Svindland\footnote{gregor.svindland@insurance.uni-hannover.de}}
\affil{\small Institute of Actuarial and Financial Mathematics and House of Insurance\\ Gottfried Wilhelm Leibniz Universit\"at Hannover}
\date{01.10.2025}
\begin{document}

\maketitle

\begin{abstract}
    This paper assumes a robust, in general not dominated, probabilistic framework and provides necessary and sufficient conditions for a bipolar representation of subsets of the set of all quasi-sure equivalence classes of non-negative random variables, without any further conditions on the underlying measure space. This generalizes and unifies existing bipolar theorems proved under stronger assumptions on the robust framework. Applications are in areas of robust financial modeling. \\
    {\bf Keywords:} robust financial models, non-dominated set of probability measures, bipolar theorem, sensitivity, convergence and closure on robust function space \\
    {{\bf MSC2020}: 46A20, 46E30, 46N10, 46N30, 60B11, 91G80} \\
    {{\bf JEL}: C65, D80}
\end{abstract}

\section{Introduction}

The well-known bipolar theorem proved in \cite{BS1999} provides necessary and sufficient conditions for the existence of a bipolar representation of a set $\mathcal{C} \subseteq L^{0}_{P+}$ by means of elements of $L^{0}_{P+}$ itself, see Theorem~\ref{thm:BS}. Here, $L^{0}_{P+} := L^{0}_{+}(\Omega, \mathcal{F}, P)$ denotes the positive cone of $L^{0}(\Omega, \mathcal{F}, P)$, which is endowed with the topology induced by convergence in probability, and $(\Omega, \mathcal{F}, P)$ is a probability space. An important application of this result is the dual characterization of solutions to utility maximization problems, see \cite{KS1999, KS2003}.

\cite{BS1999} show that $\mathcal{C}$ allows a bipolar representation in $L^{0}_{P+}$ if and only if $\mathcal{C}$ is convex, solid, and closed in probability. The aim of this paper is to generalize this result to a so-called robust framework, where the probability measure $P$ is replaced by a family of probability measures $\mathcal{P}$ which is not necessarily dominated. Such extensions have already been studied in \cite{BK2019}, \cite{GM2020}, and \cite{ LMS2022} where sufficient conditions for a bipolar representation in very particular robust frameworks are given. 

In this paper, without further assumptions on the underlying measure space, we provide necessary and sufficient conditions for a bipolar representation of $\mathcal{C} \subseteq L^{0}_{c+}$, where $c$ is the upper probability induced by the set of probability measures $\mathcal{P}$, and $L^{0}_{c+}$ denotes the robust counterpart of $L^{0}_{P+}$. As a byproduct we obtain a common framework for and unify the already mentioned bipolar results of  \cite{ BK2019}, \cite{ BS1999}, \cite{GM2020}, and \cite{ LMS2022}.

Of course, convexity and solidity of $\mathcal C$ are necessary conditions for a bipolar representation, also in robust frameworks. A key observation, however, is that any bipolar representation requires a property called \emph{$\mathcal{P}$-sensitivity}  of $\mathcal C$, see Definition~\ref{def:sensitivity}, a property which is trivially satisfied in the classical dominated case and only reveals itself in a non-dominated robust framework. $\mathcal{P}$-sensitivity as an essential property to handle sets of random variables over robust model space also appeared in \cite{BM2020} and \cite{MMS2018}. Essentially, a set is $\mathcal{P}$-sensitive if membership to that set is determined by separate evaluations under (all) probability measures which are absolutely continuous to $\mathcal{P}$. This allows a 'localizing' strategy when deriving results for such sets, where localizing means verifying some statement separately under each probability measure. Not only is $\mathcal{P}$-sensitivity necessary for any bipolar representation, but the mentioned localizing strategy allows to \textit{lift} bipolar theorems known within a dominated framework to the robust model space, see Sections~\ref{sec:lifting} and \ref{sec:bip:thm}. Indeed, in this way we derive our main results on bipolar representations for sets $\mathcal{C}\subseteq L^0_{c+}$ by lifting the mentioned bipolar theorem of \cite{BS1999} from $L^0_{P+}$ to $L^0_{c+}$. In fact, we will pay more attention to lifting a related bipolar theorem  given in \cite{KS2011} which has the advantage of the more manageable dual space $L^\infty_P$, see Section~\ref{sec:bip:thm}.

Regarding $\mathcal{P}$-sensitivity, we give a number of conditions that guarantee $\mathcal{P}$-sensitivity of sets in Section~\ref{sec:pSensi:rel}. In particular, we will show that $\mathcal{P}$-sensitivity is equivalent to the aggregation property known from robust statistics, see, e.g., \cite[Section 1.5]{T1991}, or robust stochastic (financial) models, see, e.g., \cite{STZ2011}. 

Let us revisit the list of necessary properties for a bipolar representation. It is clear that this list must also include some kind of closedness of $\mathcal C$. In this respect it turns out that, in contrast to dominated models where closedness with respect to convergence in probability under the dominating probability measure is the canonical choice, in the non-dominated case there are a variety of notions of closedness which offer themselves as necessary and reasonable requirements depending on the point of view on the problem.  For a solid set, all of them may be seen as robust generalizations of closedness with respect to convergence in probability.
A main contribution of this paper is to relate the underlying notions of convergence on $L^0_c$, and thus the different types of closedness, to each other, see Section~\ref{sec:clsd}. Eventually, we identify sequential order closedness with respect to the quasi-sure order as the appropriate equivalent of a number of notions of closedness for solid sets which are necessary, and in fact sufficient in combination with the other properties mentioned above, for a bipolar representation of $\mathcal C$.

In Section~\ref{sec:bip:thm}, we collect versions of the bipolar theorem on $L^0_{c+}$ obtained by the lifting procedure described earlier. The versions differ in the choice of the class of dual elements. Appropriate classes of dual elements turn out to be combinations of probability measures and test functions, or simply the set of finite measures. In Section~\ref{sec:appl}, we collect several applications of the bipolar representations given in Section~\ref{sec:bip:thm}. In particular we show how our results generalize the bipolar theorems of \cite{BK2019}, \cite{GM2020}, and \cite{LMS2022}. Moreover, we sketch applications to mathematical finance which are part of ongoing research, and finally we provide a mass transport type duality adopted from \cite{BK2019}. 

A widely studied source of uncertainty leading to robust models is uncertainty about the volatility of (continuous time) price processes, see, e.g., \cite{C2012} and \cite{STZ2011}. In that respect \cite{DHP2011} investigate capacities and robust function spaces based on sublinear expectations, in particular $G$-expectation. \cite{BK2012} study risk measures under model uncertainty with a focus on dual representations. Robust duality has further been explored by \cite{BD2018}, who examine general equilibrium theory under Knightian uncertainty. \cite{LN2022} study robust Orlicz spaces and their duals. As regards the robust bipolar theorems mentioned above, \cite{BK2019} prove a pointwise bipolar theorem within a model-free setting, see Section~\ref{sec:appli:3}. Applications of this result, as provided in \cite{BK2019}, include robust hedging in discrete time and the aforementioned mass transport type duality result. In their investigation of surplus-invariant risk measures, \cite{GM2020} derive another robust bipolar theorem, see Section~\ref{sec:appli:1}. Lastly, \cite{LMS2022} study model uncertainty from a reverse perspective, trying to understand which conditions the probabilistic model has to satisfy in order to obtain robust analogues of useful properties of the model space known in dominated frameworks. In this regard the bipolar theorem discussed in Section~\ref{sec:appli:2} appears.

The paper is organized as follows. In Section~\ref{sec:prelim:not}, we introduce some notation and preliminary results, including a first discussion of $\mathcal{P}$-sensitivity. Section~\ref{sec:bip:rep} recalls the bipolar theorems of \cite{BS1999} and \cite{KS2011}, and provides a reverse study of bipolar representations which already establishes the necessity of $\mathcal{P}$-sensitivity. In Section~\ref{sec:clsd}, we discuss the mentioned different concepts of closedness of sets in $L^0_c$. Then, in Section~\ref{sec:pSensi:rel}, we give conditions which imply $\mathcal{P}$-sensitivity of sets, and we also provide an equivalent characterization of $\mathcal{P}$-sensitivity in terms of the aggregation property mentioned above. Section~\ref{sec:bip:thm} collects our main results, which are versions of the bipolar theorem under uncertainty. Lastly, in Section~\ref{sec:appl}, we collect some applications.

\section{Preliminaries and Notation} \label{sec:prelim:not}
\subsection{Basics}\label{sec:basics}

Throughout this paper $(\Omega, \mathcal{F})$ denotes an arbitrary measurable space. By $ca$ we denote the real vector space of all countably additive finite variation set functions $\mu \colon \mathcal{F} \rightarrow \mathbb{R}$, and by $ca_+$ its positive elements ($\mu\in ca_+ \Leftrightarrow \forall A \in \mathcal{F} \ \mu[A] \geq 0$), that is all finite measures on $(\Omega, \mathcal{F})$.
Given non-empty subsets $\mathfrak{G}$ and $\mathfrak{I}$ of $ca_+$, we say that $\mathfrak{I}$ dominates $\mathfrak{G}$ ($\mathfrak{G} \ll \mathfrak{I}$) if for all $N \in \mathcal{F}$ satisfying $\sup_{\nu \in \mathfrak{I}}  \nu[N] = 0$, we have $\sup_{\mu \in \mathfrak{G}}  \mu[N] = 0$.  $\mathfrak{G}$ and $\mathfrak{I}$ are equivalent ($\mathfrak{G} \approx \mathfrak{I}$) if $\mathfrak{G} \ll \mathfrak{I}$ and $\mathfrak{I} \ll \mathfrak{G}$. For the sake of brevity, for $\mu \in ca_+$ we shall write $\mathfrak{G} \ll \mu$, $\mu \ll \mathfrak{I}$, and $\mu \approx \mathfrak{G}$ instead of $\mathfrak{G} \ll \{\mu\}$, $\{\mu\} \ll \mathfrak{I}$, and $\{\mu\} \approx \mathfrak{G}$, respectively. \\
$\mathfrak{P}(\Omega) \subseteq ca_+$ denotes the set of probability measures on $(\Omega, \mathcal{F})$ and the letters $\mathcal{P}$ and $\mathcal{Q}$ are used to denote non-empty subsets of $\mathfrak{P}(\Omega)$. 
Fix such a set $\mathcal{P}$. We then write $c$ for the induced upper probability $c \colon \mathcal{F} \rightarrow [0, 1]$ defined by
\begin{equation*}
    c[A] := \sup_{P \in \mathcal{P}} P[A], \quad A\in \mathcal{F}.
\end{equation*}
An event $A \in \mathcal{F}$ is called $\mathcal{P}$-polar if $c[A] = 0$. A property holds $\mathcal{P}$-quasi surely (q.s.)\ if it holds outside a $\mathcal{P}$-polar event. We set $ca_{c} := \{\mu\in ca \colon \mu \ll \mathcal P\}$, $ca_{c+} :=ca_{+}\cap ca_{c}$, and $\mathfrak{P}_c(\Omega):=\mathfrak{P}(\Omega)\cap ca_{c}$. 

Consider the $\R$-vector space $\mathcal{L}^{0} := \mathcal{L}^{0}(\Omega, \mathcal{F})$ of all real-valued random variables $f \colon \Omega \rightarrow \R$ as well as its subspace $\mathcal{N} := \{f \in \mathcal{L}^{0} \colon c[\lvert f \rvert > 0] = 0\}$. The quotient space $L^{0}_{c} := \mathcal{L}^{0}/\mathcal{N}$ consists of equivalence classes $X$ of random variables up to $\mathcal{P}$-q.s.\ equality comprising representatives $f \in X$. The equivalence class induced by $f\in \mathcal{L}^0$ in $L^{0}_{c}$ is denoted by $[f]_c$. The space $L^{0}_{c}$ carries the so-called $\mathcal{P}$-quasi-sure order $\preccurlyeq_{\mathcal P}$ as a natural vector space order: $X, Y \in L^{0}_{c}$ satisfy $X \preccurlyeq_{\mathcal P} Y$ if for $f \in X$ and $g \in Y$, $f \leq g$ $\mathcal{P}$-q.s., that is $\{f>g\}$ is $\mathcal P$-polar. In order to facilitate the notation, we suppress the dependence of $\preccurlyeq_{\mathcal P}$ on $\mathcal{P}$ and simply write $\preccurlyeq$ if there is no risk of confusion.  
$(L^{0}_{c}, \preccurlyeq)$ is a vector lattice, and for $X, Y \in L^{0}_{c}$, $f \in X$, and $g \in Y$, the minimum $X \wedge Y$ is the equivalence class $[f \wedge g]_{c}$ generated by the pointwise minimum $f \wedge g$, whereas the maximum $X \vee Y$ is the equivalence class $[f \vee g]_{c}$ generated by the pointwise maximum $f \vee g$. For an event $A \in \mathcal{F}$, $\chi_{A}$ denotes the indicator of the event (i.e. $\chi_{A}(\omega) = 1$ if and only if $\omega \in A$, and $\chi_{A}(\omega) = 0$ otherwise) while $\mathbf{1}_{A} := [\chi_{A}]_{c}$ denotes the generated equivalence class in $L^{0}_{c}$. Throughout the paper, for convenience, we identify the constants $m\in \R$ with the (equivalence classes of) constant random variables they induce, in particular $m=[m]_c=m\cdot \mathbf{1}_\Omega$. \\
A subspace of $L^{0}_{c}$ which will turn out to be important for our studies is the space $L^{\infty}_{c}$ of equivalence classes of $\mathcal{P}$-q.s.\ bounded random variables, i.e.,
\begin{equation*}
    L^{\infty}_{c} := \{X \in L^{0}_{c} \colon \exists m > 0 \ \lvert X \rvert \preccurlyeq m\}.
\end{equation*}
$L^{\infty}_{c}$ is a Banach lattice when endowed with the norm
\begin{equation*}
    \lVert X \rVert_{L^{\infty}_{c}} := \inf \{ m > 0 \colon \lvert X \rvert \preccurlyeq m\}, \quad X \in L^{0}_{c}.
\end{equation*}
$L^{0}_{c+}$ and $L^{\infty}_{c+}$ denote the positive cones of $L^{0}_{c}$ and $L^{\infty}_{c}$, respectively. If $\mathcal{P} = \{P\}$ is given by a singleton and thus $c=P$, we write $L^{0}_{P}$, $L^{\infty}_{P}$, and $[f]_P$ instead of $L^{0}_c$, $L^{\infty}_{c}$, and $[f]_{c}$, and similarly for other expressions where $c$ appears. Also, the $\mathcal P$-q.s.\ order in this case is the $P$-almost-sure (a.s.)\ order which we will also denote by $\leq_P$ when we are working with both the $\mathcal P$-q.s.\ order $\preccurlyeq$ for some set $\mathcal P\subseteq \mathfrak{P}(\Omega)$ and the $P$-a.s.\ order for some $P\in  \mathfrak{P}(\Omega)$ (typically $P\ll \mathcal{P}$). \\
Often we will, as is common practice, identify equivalence classes of random variables with their representatives. However, sometimes it will be helpful to distinguish between them to avoid confusion. Let us clarify this further: $X$ is an equivalence class of random variables if there exists an equivalence relation $ \sim$ on $\mathcal L^0$ such that $X = \{f\in \mathcal{L}^0 \colon f \sim g\}$ for some $g \in \mathcal{L}^0$. A measure $P\in \mathfrak{P}(\Omega)$ is consistent with the equivalence relation $\sim$ if
\begin{equation*}
    \forall f,g\in \mathcal{L}^0 \quad f \sim g \Longrightarrow P[f = g] = 1.
\end{equation*}
In that case we, for instance, write $E_P[X]$ for the expectation of $X$ under $P$, which actually means $E_P[f]$ for any $f\in X$ provided the latter integral is well-defined. Also, we will write expressions like $P[X = Y]$, where $Y$ is another equivalence class of random variables with respect to the same equivalence relation $\sim$, actually meaning $P[f = g]$ for arbitrary $f \in X$ and $g \in Y$. The difference here to the usual convention of identifying equivalence classes of random variables with their representatives is that the equivalence relation $\sim$ might not be given by $P$-a.s.\ equality, but $P$ is only assumed to be consistent with that equivalence relation in the above sense. A typical example is the equivalence relation given by $\mathcal P$-q.s.\ equality of random variables and $P\in\mathfrak{P}_c(\Omega)$.

\subsection{Supported Measures and Class (S) Robustness} \label{sec:support}

Supported measures $\mu \in ca_{c}$ play a key role in handling robustness. This concept is also known in statistics, see \cite{LMS2022} for a detailed review.

\begin{defi} \label{defi:supp}
    \begin{enumerate}[itemindent=25pt, leftmargin=0pt, nosep]
        \item A measure $\mu\in ca_{c+}$ is called supported if there is an event $S(\mu) \in \mathcal{F}$ such that
        \begin{enumerate}[itemindent=47pt, leftmargin=0pt, nosep]
            \item $\mu[S(\mu)^{c}] = 0$;
            
            \item whenever $N \in \mathcal{F}$ satisfies $\mu[N \cap S(\mu)] = 0$, then $N \cap S(\mu)$ is $\mathcal{P}$-polar.
        \end{enumerate}
        The set $S(\mu)$ is called the (order) support of $\mu$.
        
        \item A signed measure $\mu \in ca_{c}$ is supported if  $\lvert \mu \rvert$  is supported where
        \begin{equation*}
            \lvert \mu \rvert(A) := \sup \{\mu[B] - \mu[A \setminus B] \colon B \in \mathcal{F}, B \subseteq A\}, \quad  A\in \mathcal F,
        \end{equation*} is the total variation of $\mu$.
        
        \item We set
        \begin{equation*}
            sca_{c} := \{\mu \in ca_{c} \colon \mu \text{ is supported}\},
        \end{equation*}
        the space of all supported signed measures in $ca_{c}$, and $sca_{c+} := sca_{c} \cap ca_{c+}$.
    \end{enumerate}
\end{defi}

Note that if two sets $S, S' \in \mathcal{F}$ satisfy conditions (a) and (b) in Definition \ref{defi:supp}(1), then $\chi_{S} = \chi_{S'}$ $\mathcal{P}$-q.s. (${\bf 1}_{S}={\bf 1}_{S'}$), i.e., the symmetric difference $S \bigtriangleup S'$ is $\mathcal{P}$-polar. The order support $S(\mu)$ is thus usually not unique as an event, but only unique up to $\mathcal{P}$-polar events. In the following $S(\mu)$ therefore denotes an arbitrary version of the order support.

\begin{defi}
    A net $(X_{\alpha})_{\alpha \in I} \subset L^{0}_{c}$ is order convergent to $X \in L^{0}_{c}$, denoted $X_{\alpha} \order{c} X$,  if there is another net $(Y_{\alpha})_{\alpha \in I} \subset L^{0}_{c}$ with the same index set $I$ which is decreasing ($\alpha, \beta \in I$ and $\alpha \leq \beta$ imply $Y_{\beta} \preccurlyeq Y_{\alpha}$), satisfies $\inf_{\alpha \in I} Y_{\alpha} = 0$, and for all $\alpha \in I$ it holds that $ \lvert X_{\alpha} - X \rvert \preccurlyeq Y_{\alpha}$. Here, as usual, $\inf_{\alpha \in I} Y_{\alpha}$ denotes the largest lower bound of the net $(Y_{\alpha})_{\alpha \in I}$.
\end{defi}

Note that if $\mathcal P=\{P\}$, then $c=P$, and hence order convergence on $L^0_P$ with respect to the $P$-a.s.\ order is naturally denoted by $X_{\alpha} \order{P} X$. 

For $\mu\in ca_c$ consider the functional 
\begin{equation} \label{eq:sca:ordercont}
    L^\infty_c\ni X\mapsto \int X d\mu.
\end{equation}
This functional is called $\sigma$-order continuous if for every sequence $(X_n)_{n\in \N}\subset L^\infty_{c}$ such that $X_n\order{c} X\in L^\infty_c$ we have $\int X_n d\mu \to \int X d\mu$, and order continuous if for every net $(X_\alpha)_{\alpha\in I}\subset L^\infty_{c}$ such that $X_\alpha\order{c} X\in L^\infty_c$ we have $\int X_\alpha d\mu \to \int X d\mu$.
Indeed, the space of order continuous linear functionals may be identified with $sca_c$ via \eqref{eq:sca:ordercont}. In the same way $ca_c$ is identified with the space of all $\sigma$-order continuous functionals, and any $\mu\in ca_c\setminus sca_c$ induces a linear $\sigma$-order continuous functional which is not order continuous. Note that in robust frameworks $ca_c\setminus sca_c\neq \emptyset$ is often the case. We refer to \cite{LMS2022} for the latter facts, and, in general, for a concise but comprehensive discussion of the spaces $ca_c$ and $sca_c$.

Stochastic models, for instance of financial markets, which do not assume a dominating probability measure are often referred to as being robust; see \cite{BN2015, BFM2017, N2014, STZ2011} and the references therein. In \cite{LMS2022} an important subclass of such robust models, namely the models of class (S) defined next, are discussed:
\begin{defi}
    $\mathcal{P}$ is of class (S) if there exists a set of supported probability measures $\mathcal{Q}$ (i.e.\ $\mathcal{Q}\subseteq \mathfrak{P}_c(\Omega)\cap sca_{c} $) such that $\mathcal{Q} \approx \mathcal{P}$. In that case we call $\mathcal{Q}$ a supported alternative of $\mathcal{P}$.
\end{defi}

Let us briefly comment on the significance of the class (S) property: Suppose that $\mathcal P$ is of class (S) and let $\mathcal{Q}$ be a supported alternative of $\mathcal{P}$. As $\mathcal{Q}\approx \mathcal{P}$, the $\mathcal{Q}$-q.s.\ order coincides with the $\mathcal{P}$-q.s.\ order $\preccurlyeq$. Hence, when arguing by means of the $\mathcal{P}$-q.s.\ order---think of robust superhedging, for instance---which means to prove some statement for each $P\in \mathcal{P}$, we may indeed switch to $\mathcal{Q}$ and prove the corresponding statement for each $Q\in \mathcal{Q}$. Here we often benefit from $Q$ being supported. In \cite{LMS2022} it is shown how the class (S) property is important, and indeed necessary, in many situations to handle robustness in non-dominated frameworks. 

\begin{defi}
    Let $\mathcal{Q} \subseteq \mathfrak{P}_c(\Omega) \cap sca_{c}$. We say that $\mathcal{Q}$ has disjoint supports if, for all $Q, Q' \in \mathcal{Q}$ such that $Q \neq Q'$,  $\mathbf{1}_{S(Q)} \wedge \mathbf{1}_{S(Q')} = 0$, that is $S(Q)\cap S(Q')$ is a $\mathcal{P}$-polar event.
\end{defi}

\begin{lem}[{see \cite[Lemma 3.7]{LMS2022}}]\label{lem:disjoint:alt}
    Suppose $\mathcal{P}$ is of class (S). Then there exists a supported alternative $\mathcal{Q} \approx \mathcal{P}$ with disjoint supports. $\mathcal{Q}$ will be referred to as a disjoint supported alternative.
\end{lem}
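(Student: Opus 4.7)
The plan is to construct the disjoint supported alternative via a maximality argument based on Zorn's lemma, using a ``conditioning'' construction to contradict maximality whenever equivalence to $\mathcal P$ fails. Let $\mathcal Q_0$ be any supported alternative of $\mathcal P$ guaranteed by the class (S) property. Consider the collection
\begin{equation*}
    \mathfrak R := \{\mathcal R \subset \mathfrak P_c(\Omega)\cap sca_c \mid \mathcal R \ll \mathcal P \text{ and } \mathcal R \text{ has disjoint supports}\},
\end{equation*}
partially ordered by set inclusion. I would first observe that $\mathfrak R$ is non-empty (take $\{Q_0\}$ for any $Q_0\in\mathcal Q_0$) and that every totally ordered chain has an upper bound, since the union of a chain of disjoint-support families is again disjoint-supported (any two elements live together in some chain member). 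Zorn's lemma then yields a maximal element $\mathcal Q\in\mathfrak R$.

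It remains to verify $\mathcal Q\approx\mathcal P$. The inclusion $\mathcal Q\ll\mathcal P$ is built into the definition of $\mathfrak R$. For the reverse, I would argue by contradiction: assume there exists $M\in\mathcal F$ with $\sup_{Q\in\mathcal Q}Q(M)=0$ but $c(M)>0$. Since $\mathcal Q_0\approx\mathcal P$, one can choose $Q_0\in\mathcal Q_0$ with $Q_0(M)>0$, and then set
\begin{equation*}
    \widetilde Q(A) := \frac{Q_0(A\cap M)}{Q_0(M)},\qquad A\in\mathcal F.
\end{equation*}
This yields a probability measure with $\widetilde Q\ll Q_0\ll\mathcal P$ and $\widetilde Q(M)=1$.

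Next I would check that $\widetilde Q$ is supported with $S(\widetilde Q) := S(Q_0)\cap M$: property (a) is immediate, and if $N\cap S(\widetilde Q)$ has $\widetilde Q$-mass zero, then $Q_0(N\cap S(Q_0)\cap M)=0$, so by property (b) for $Q_0$ this set is $\mathcal P$-polar, which is exactly property (b) for $\widetilde Q$. Finally, for any $Q\in\mathcal Q$ we have $Q(M)=0$, hence $Q(M\cap S(Q))=0$, and property (b) applied to $Q$ shows that $M\cap S(Q)$ is $\mathcal P$-polar; therefore $S(\widetilde Q)\cap S(Q)\subset M\cap S(Q)$ is $\mathcal P$-polar, i.e.\ the support of $\widetilde Q$ is disjoint (modulo $\mathcal P$-polar sets) from every $S(Q)$. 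Since $\widetilde Q(M)=1\neq 0=Q(M)$ for $Q\in\mathcal Q$, the measure $\widetilde Q$ is not already contained in $\mathcal Q$, so $\mathcal Q\cup\{\widetilde Q\}$ strictly enlarges $\mathcal Q$ within $\mathfrak R$, contradicting maximality.

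The main obstacle is avoiding set-theoretic complications from taking unions of supports over a possibly uncountable family $\mathcal Q$; the construction sidesteps this by never forming such a union and instead exploiting property (b) of each individual support $S(Q)$ separately. A secondary subtlety is verifying that the conditioned measure $\widetilde Q$ inherits the support property from $Q_0$, which is a direct but crucial reduction to the hypothesis that $\mathcal Q_0\subset sca_c$.
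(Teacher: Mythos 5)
Your argument is correct: the Zorn's lemma maximality argument over families with pairwise disjoint supports, combined with conditioning a supported measure $Q_0$ on a set $M$ that the maximal family misses, is exactly the standard route to this result, and each verification (that $\widetilde Q$ is supported with support $S(Q_0)\cap M$, and that $M\cap S(Q)$ is $\mathcal P$-polar for every $Q$ in the maximal family) checks out against Definition~\ref{defi:supp}. Note that the paper itself does not prove this lemma but imports it from \citet[Lemma 3.7]{LMS2022}, where the proof proceeds along essentially the same lines, so your proposal matches the intended argument.
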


The following Example~\ref{exmp:classS} serves as a simple illustration of the preceding discussion. In Example~\ref{exmp:lit} below, we collect prominent examples of class (S) models in the literature. The class (S) property of the latter examples is extensively discussed in \cite{LMS2022}.  

\begin{exmp} \label{exmp:classS}
    Consider the unit interval $\Omega = [0, 1]$ equipped with the Borel-$\sigma$-algebra $\mathcal{F} = \mathcal{B}(\Omega)$. Let $\mathcal{P} := \mathfrak{P}(\Omega)$, and let $\mathcal{Q} := \{\delta_{\omega} \colon \omega \in \Omega\}$ be the set of all Dirac probability measures. Clearly, $\mathcal{Q} \approx \mathcal{P}$. In particular, the  upper probability $c$ induced by $\mathcal{P}$ satisfies
    \begin{equation*}
        c[A] = \sup_{P\in \mathcal{P}} P[A]= \sup_{\omega\in \Omega}\delta_\omega[A] =
        \begin{cases}
            1, &\text{ if } A \neq \emptyset, \\
            0, &\text{ if } A = \emptyset,
        \end{cases}
        \quad A\in \mathcal{F}.
    \end{equation*}
    Thus, the only $\mathcal{P}$-polar set is $\emptyset$. For each $\omega \in \Omega$, let $S(\delta_{\omega}) := \{\omega\}$. We have:  
    \begin{enumerate}[itemindent=25pt, leftmargin=0pt, nosep]
        \item[(a)] $\delta_{\omega}[\Omega \setminus \{\omega\}] = 0$.

        \item[(b)] Let $N \in \mathcal{F}$ such that $\delta_{\omega}[N \cap S(\delta_{\omega})] = \delta_{\omega}[N \cap \{\omega\}] = 0$. Then, $\omega \not \in N$ and therefore $N \cap \{\omega\}=\emptyset$ is $\mathcal{P}$-polar. 
    \end{enumerate}
    Hence, for each $\omega \in \Omega$, $\delta_{\omega}$ is supported with support $S(\delta_{\omega}) = \{\omega\}$. Moreover, those supports are obviously disjoint. In sum, we have shown that $\mathcal{Q}$ is a disjoint supported alternative to $\mathcal{P}$. In particular, $\mathcal{P}$ is of class (S). \\
    Also note that in this case $ca_{c} \neq sca_{c}$. Indeed consider, for instance, the Lebesgue measure $\lambda$ on $(\Omega,\mathcal{F})$. For any $S \in \mathcal{F}$ with $\lambda[S^{c}] = 0$, and for any non-empty countable subset $N$ of $S$, we have $\lambda[N \cap S] = \lambda[N] = 0$, but $N \cap S = N$ is not $\mathcal{P}$-polar. Hence, $\lambda$ is not supported, that is $\lambda\in ca_{c}\setminus sca_{c}$.
\end{exmp}

\begin{exmp} \label{exmp:lit}
    The underlying robust probabilistic models of the following financial models are all of class (S).  We refer to \cite[Section 3.2]{LMS2022} for the details and in particular the proofs of the class (S) property. 
    \begin{enumerate}[itemindent=25pt, leftmargin=0pt, nosep]
        \item The financial models on product spaces given in \cite{C2024} and \cite{CFR2022}, see \cite[Section~3.2.1]{LMS2022}.
        
        \item The volatility uncertainty models discussed in \cite{C2012} and \cite{STZ2011}, see \cite[Section~3.2.2]{LMS2022}.
        
        \item A model of innovation considered in \cite{AGP2017}, see \cite[Section~3.2.3]{LMS2022}.
        
        \item The models applied to study the superhedging problem in \cite{BKN2020}, \cite{BFH2019}, \cite{HO2018}, and \cite{M2003}, see \cite[Section~3.2.4]{LMS2022}.
        
        \item The robust binomial model considered in \cite{BC2020}, see \cite[Section~3.2.5]{LMS2022}.
    \end{enumerate}
\end{exmp}

\subsection{$\mathcal{P}$-sensitive Sets}

Let $\mathcal{P} \subseteq \mathfrak{P}(\Omega)$. A property that will play a major role in our studies is the so-called $\mathcal{P}$-sensitivity of subsets of $L^0_c$ defined in the following, see also \cite{MMS2018}. To this end, recall that $[f]_c$ denotes the equivalence class in $L^0_c$ generated by $f\in \mathcal{L}^0$, whereas $[f]_Q$ is the equivalence class generated by $f$ in $L^0_Q$, that is under $Q$-a.s.\ equality. The following map identifies any $X,Y\in L^0_c$ which appear to coincide under $Q$, that is $Q[f = g] = 1$ for $f \in X$ and $g \in Y$:
\begin{equation*}
    j_{Q} \colon L^{0}_{c} \rightarrow L^{0}_{Q}, \quad [f]_c \mapsto [f]_Q.
\end{equation*}

\begin{defi}\label{def:sensitivity}
    A set $\mathcal{C} \subseteq L^0_c$ is called $\mathcal{P}$-sensitive if
    \begin{equation*}
        \mathcal{C} = \bigcap_{Q \in \mathfrak{P}_c(\Omega)} j_{Q}^{-1} \circ j_{Q}(\mathcal{C}).
    \end{equation*}
\end{defi}

$\mathcal{P}$-sensitivity means that the set $\mathcal C$ is completely determined by its image under each model $Q\in \mathfrak{P}_c(\Omega)$, so if $X\in L^0_c$ looks like a member of $\mathcal{C}$ under each $Q\in \mathfrak{P}_c(\Omega)$ (i.e.\ $j_{Q}(X) \in j_{Q}(\mathcal{C})$ for all $Q \in  \mathfrak{P}_c(\Omega)$), then in fact $X \in \mathcal{C}$. Note that always $ \mathcal{C} \subseteq \bigcap_{Q \in \mathfrak{P}_c(\Omega)} j_{Q}^{-1} \circ j_{Q}(\mathcal{C})$, so the nontrivial inclusion is $ \bigcap_{Q \in \mathfrak{P}_c(\Omega)} j_{Q}^{-1} \circ j_{Q}(\mathcal{C})\subseteq \mathcal C$. Trivially, if $\mathcal{P}=\{P\}$, then every set $\mathcal{C}\subseteq L^0_P$ is $P$-sensitive. 

It will sometimes turn out to be useful to know a stronger sensitive representation of $\mathcal{C}$:
 
\begin{defi}
    Let $\mathcal{C} \subseteq L^0_c$. $\mathcal{Q} \subseteq \mathfrak{P}_c(\Omega)$ is called a reduction set for $\mathcal{C}$ if $\mathcal{Q}\neq \emptyset$ and  \begin{equation}\label{eq:reduction}
        \mathcal{C} = \bigcap_{Q \in\mathcal{Q}} j_{Q}^{-1} \circ j_{Q}(\mathcal{C}).
    \end{equation}
\end{defi}
 
By definition, any $\mathcal{P}$-sensitive set admits the reduction set $\mathfrak{P}_c(\Omega)$. The following lemma relates reduction sets to each other and in particular shows that any set satisfying \eqref{eq:reduction} is indeed $\mathcal{P}$-sensitive.  

\begin{lem}
    Let $\mathcal{C}\subseteq L^0_c$. 
    \begin{enumerate}[itemindent=25pt, leftmargin=0pt, nosep]
        \item Consider a reduction set $\mathcal{Q}_1$ for $\mathcal{C}$ and any other set of probability measures $\mathcal{Q}_{2} \subseteq \mathfrak{P}_c(\Omega)$ such that $\mathcal{Q}_{1} \subseteq \mathcal{Q}_{2}$. Then $\mathcal{Q}_{2}$ is also a reduction set for $\mathcal C$. 
        
        \item If $\mathcal C$ satisfies \eqref{eq:reduction} for some non-empty set $\mathcal{Q} \subseteq \mathfrak{P}_c(\Omega)$, then $\mathcal C$ is $\mathcal{P}$-sensitive.

        \item If $\mathcal{C}$ is $\mathcal{P}$-sensitive and $\tilde{\mathcal P}\subseteq \mathfrak{P}(\Omega)$ dominates $\mathcal P$, i.e., $\mathcal P \ll \tilde{\mathcal P}$, then $\mathcal C$ is $\tilde{\mathcal P}$-sensitive, where $\tilde{\mathcal P}$-sensitive means that \begin{equation*}
        \mathcal{C} = \bigcap_{\{P\in \mathfrak{P}(\Omega)\mid P \ll \tilde{\mathcal P}\}} j_{P}^{-1} \circ j_{P}(\mathcal{C}).
    \end{equation*}
    \end{enumerate}
\end{lem}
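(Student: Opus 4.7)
My plan is to prove the three parts in order, exploiting one common observation throughout: for any $\mathcal{C} \subset L^0_c$ and any $Q$, the trivial inclusion $\mathcal{C} \subset j_Q^{-1} \circ j_Q(\mathcal{C})$ holds (each member of $\mathcal{C}$ agrees with itself $Q$-a.s.), so the content of each assertion sits entirely in the reverse inclusion.

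For Part 1 I would argue by pure monotonicity of intersections: if $X$ lies in $\bigcap_{Q \in \mathcal{Q}_2} j_Q^{-1} \circ j_Q(\mathcal{C})$, then since $\mathcal{Q}_1 \subset \mathcal{Q}_2$ it also lies in $\bigcap_{Q \in \mathcal{Q}_1} j_Q^{-1} \circ j_Q(\mathcal{C})$, which by hypothesis equals $\mathcal{C}$. Combined with the trivial inclusion above this proves $\mathcal{Q}_2$ is a reduction set. Part 2 I would then obtain as an immediate application of Part 1 with $\mathcal{Q}_1 := \mathcal{Q}$ and $\mathcal{Q}_2 := \mathfrak{P}_c(\Omega)$; this is legitimate since $\mathcal{Q} \subset \mathfrak{P}_c(\Omega)$ by assumption, and the resulting identity is precisely $\mathcal{P}$-sensitivity.

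For Part 3 the key step I would carry out first is the set-theoretic inclusion $\mathfrak{P}_c(\Omega) \subset \{P \in \mathfrak{P}(\Omega) \mid P \ll \tilde{\mathcal{P}}\}$. This comes straight from the definitions by chaining absolute continuities: given the hypothesis $\mathcal{P} \ll \tilde{\mathcal{P}}$, every $\tilde{\mathcal{P}}$-polar event is $\mathcal{P}$-polar, and hence every $P \ll \mathcal{P}$ is automatically $\ll \tilde{\mathcal{P}}$. With this inclusion in hand, the sandwich $\mathcal{C} \subset \bigcap_{P \ll \tilde{\mathcal{P}}} j_P^{-1} \circ j_P(\mathcal{C}) \subset \bigcap_{Q \in \mathfrak{P}_c(\Omega)} j_Q^{-1} \circ j_Q(\mathcal{C}) = \mathcal{C}$ forces equality throughout, which is exactly the $\tilde{\mathcal{P}}$-sensitivity claimed.

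The one point I would be careful about, and what I view as the only real subtlety of Part 3, is that for $P \ll \tilde{\mathcal{P}}$ with $P \not\ll \mathcal{P}$ the map $j_P\colon L^0_c \to L^0_P$ is not literally well defined on equivalence classes; such $P$ generally exist, since $\mathcal{P} \ll \tilde{\mathcal{P}}$ only forces $\tilde{\mathcal{P}}$-polar sets to be $\mathcal{P}$-polar and not the reverse. I would address this by reading $j_P^{-1} \circ j_P(\mathcal{C})$ throughout as the set $\{X \in L^0_c \mid \exists Y \in \mathcal{C},\ P(f = g) = 1 \text{ for some representatives } f \in X,\ g \in Y\}$, which is unambiguous for every $P \in \mathfrak{P}(\Omega)$ and coincides with the literal image-preimage whenever $j_P$ itself is well defined. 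Under this reading the sandwich argument goes through verbatim and the proof is complete.
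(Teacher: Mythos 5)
Your argument is correct and follows essentially the same route as the paper: the sandwich $\mathcal{C} \subset \bigcap_{Q \in \mathcal{Q}_2} j_Q^{-1}\circ j_Q(\mathcal{C}) \subset \bigcap_{Q \in \mathcal{Q}_1} j_Q^{-1}\circ j_Q(\mathcal{C}) = \mathcal{C}$ for Part 1, Part 2 as the special case $\mathcal{Q}_1 = \mathcal{Q}$, $\mathcal{Q}_2 = \mathfrak{P}_c(\Omega)$, and Part 3 via the inclusion $\mathfrak{P}_c(\Omega) \subset \{P \in \mathfrak{P}(\Omega) \mid P \ll \tilde{\mathcal{P}}\}$ followed by the same sandwich. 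Your additional remark on how to read $j_P^{-1}\circ j_P(\mathcal{C})$ when $P \ll \tilde{\mathcal{P}}$ but $P \not\ll \mathcal{P}$ is a legitimate point of care that the paper passes over silently, and your existential reading resolves it without altering the argument.
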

\begin{proof} The first statement follows from
    \begin{equation}\label{eq:help:sen:1}
        \mathcal{C} \subseteq \bigcap_{Q \in \mathcal{Q}_{2}} j_{Q}^{-1} \circ j_{Q}(\mathcal{C}) \subseteq \bigcap_{Q \in \mathcal{Q}_{1}} j_{Q}^{-1} \circ j_{Q}(\mathcal{C}) = \mathcal{C}.
    \end{equation}
    The second assertion follows from 1.\ by choosing $\mathcal{Q}_1=\mathcal{Q}$ and $\mathcal{Q}_2=\mathfrak{P}_c(\Omega)$. Finally,  $\mathcal P \ll \tilde{\mathcal P}$ implies that $\mathfrak{P}_c(\Omega) \subseteq \{P\in \mathfrak{P}(\Omega) \colon P \ll \tilde{\mathcal P}\}$, so we may argue as in \eqref{eq:help:sen:1}. 
\end{proof}

The reason for considering other reduction sets than simply $\mathfrak{P}_c(\Omega)$ will become evident throughout the paper. As we will see next, $\mathcal{P}$-sensitive sets are stable under intersection.

\begin{lem}\label{lem:intersect:Psensi}
    Let $I$ be a non-empty index set and let $\mathcal{C}_{\alpha} \subseteq L^{0}_{c}$, $\alpha\in I$, be $\mathcal{P}$-sensitive. Then
    \begin{equation*}
        \mathcal{C} := \bigcap_{\alpha \in I} \mathcal{C}_{\alpha}
    \end{equation*}
    is also $\mathcal{P}$-sensitive. If $\mathcal{Q}_\alpha \subseteq \mathfrak{P}_c(\Omega)$ is a reduction set for $\mathcal{C}_{\alpha}$ for each $\alpha\in I$, then $\mathcal Q := \bigcup_{\alpha\in I} \mathcal{Q}_\alpha$ is a reduction set for $\mathcal C$. 
\end{lem}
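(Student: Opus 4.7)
The plan is to prove the stronger reduction-set statement first and then derive $\mathcal{P}$-sensitivity as an immediate corollary via Lemma~\ref{lem:pSensi:redSet}(2). So set $\mathcal{Q} := \bigcup_{\alpha\in I}\mathcal{Q}_\alpha$ and aim to verify
\begin{equation*}
    \mathcal{C} = \bigcap_{Q \in \mathcal{Q}} j_{Q}^{-1} \circ j_{Q}(\mathcal{C}).
\end{equation*}
The inclusion ``$\subset$'' holds trivially since in general $\mathcal{D} \subset j_Q^{-1}\circ j_Q(\mathcal{D})$ for any set $\mathcal{D}\subset L^0_c$ and any $Q$.

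For the nontrivial inclusion ``$\supset$'', I would take $X$ in the right-hand side and show $X\in \mathcal{C}_\alpha$ for every $\alpha \in I$. Fix $\alpha$. Since $\mathcal{Q}_\alpha \subset \mathcal{Q}$, for each $Q\in \mathcal{Q}_\alpha$ the hypothesis yields some $Y_Q\in \mathcal{C}$ with $j_Q(X)=j_Q(Y_Q)$. Because $\mathcal{C}\subset \mathcal{C}_\alpha$, this gives $j_Q(X)\in j_Q(\mathcal{C}_\alpha)$ for every $Q\in \mathcal{Q}_\alpha$, i.e., $X\in \bigcap_{Q\in \mathcal{Q}_\alpha} j_Q^{-1}\circ j_Q(\mathcal{C}_\alpha)=\mathcal{C}_\alpha$, where the last equality is the reduction property of $\mathcal{Q}_\alpha$ for $\mathcal{C}_\alpha$. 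As $\alpha$ was arbitrary, $X\in \bigcap_{\alpha\in I}\mathcal{C}_\alpha =\mathcal{C}$.

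Having established that $\mathcal{Q}$ is a reduction set for $\mathcal{C}$, Lemma~\ref{lem:pSensi:redSet}(2) immediately gives that $\mathcal{C}$ is $\mathcal{P}$-sensitive, which settles the first assertion. Alternatively one can argue directly: each $\mathcal{C}_\alpha$ being $\mathcal{P}$-sensitive means $\mathfrak{P}_c(\Omega)$ is a reduction set for every $\mathcal{C}_\alpha$, and the already proved reduction-set claim then yields that $\mathfrak{P}_c(\Omega)=\bigcup_{\alpha\in I}\mathfrak{P}_c(\Omega)$ is a reduction set for $\mathcal{C}$, which is exactly the definition of $\mathcal{P}$-sensitivity.

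I do not expect a real obstacle here; this is essentially a set-theoretic manipulation. The only point worth being careful about is that the witness $Y_Q\in \mathcal{C}$ obtained from $X\in j_Q^{-1}\circ j_Q(\mathcal{C})$ may depend on $Q$ and on $\alpha$, but we do not need any coherence across different $Q$: it suffices that, for each fixed $\alpha$, $j_Q(X)$ lies in $j_Q(\mathcal{C}_\alpha)$ for every $Q\in \mathcal{Q}_\alpha$, which the reduction property of $\mathcal{Q}_\alpha$ then upgrades to $X\in \mathcal{C}_\alpha$.
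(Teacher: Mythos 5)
Your proof is correct and follows essentially the same route as the paper: the paper's (very terse) argument likewise observes that $j_Q(X)\in j_Q(\mathcal{C})\subset j_Q(\mathcal{C}_\alpha)$ for all $Q\in\mathcal{Q}_\alpha$ and invokes the reduction property of $\mathcal{Q}_\alpha$ for $\mathcal{C}_\alpha$ to get $X\in\mathcal{C}_\alpha$ for every $\alpha$. Your version merely spells out the witnesses $Y_Q$ and the derivation of the first assertion from the second, which the paper leaves implicit.
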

\begin{proof}
    Suppose that $j_Q(X)\in j_Q(\mathcal C)$ for all $Q\in \mathcal Q$. Then in particular $j_Q(X)\in j_Q(\mathcal C)$ for all $Q\in \mathcal Q_\alpha$ and all $\alpha\in I$. Hence, $X\in \mathcal{C}_\alpha$ for all $\alpha\in I$.
\end{proof}

In the following Example~\ref{exmp:pSensi}, we present first simple examples of $\mathcal{P}$-sensitive sets. 
For a more detailed discussion of $\mathcal{P}$-sensitivity, and further examples, we refer to Section~\ref{sec:pSensi:rel}.

\begin{exmp} \label{exmp:pSensi}
For $a\in \R$ consider the sets
    \begin{equation*}
        \mathcal{C}_-^a := \{X \in L^{0}_{c} \colon X \preccurlyeq a\} \quad \text{and} \quad \mathcal{C}_+^a := \{X \in L^{0}_{c} \colon a \preccurlyeq X\}.
    \end{equation*}
Let us show that $\mathcal{C}_-^a$ and $\mathcal{C}_+^a$ are $\mathcal{P}$-sensitive, and that $\mathcal{P}$ serves as a reduction set in both cases. To this end, fix $X \in L^{0}_{c}$ such that $j_{P}(X) \in j_{P}(\mathcal{C}_-^a)$ for every $P \in \mathcal{P}$. Then, for each $P \in \mathcal{P}$, there exists $Y^{P} \in \mathcal{C}_-^a$ such that $j_{P}(X) = j_{P}(Y^{P})$, meaning that $P[X = Y^{P}] = 1$. Hence, $P[X \leq a] = P[Y^{P} \leq a] = 1$ for every $P \in \mathcal{P}$. Therefore, $X \in \mathcal{C}_-^a$. We have thus shown that  $\mathcal{C}_-^a$ is $\mathcal{P}$-sensitive with  reduction set $\mathcal{P}$. The same reasoning proves $\mathcal{P}$-sensitivity of $\mathcal{C}_+^a$. 
\end{exmp}

\section{Bipolar Representations} \label{sec:bip:rep}

We recall the well-known bipolar theorem on $L^0_{P+}$ given in \cite[Theorem 1.3]{BS1999} and used in the seminal study by \cite{KS1999} of the utility maximization problem: 

\begin{thm} \label{thm:BS}
    Let $P\in \mathfrak{P}(\Omega)$ and $\mathcal{C}\subseteq L^0_{P+}$ be non-empty. Define the polar of $\mathcal{C}$ as
    \begin{equation*}
        \mathcal{C}^\circ := \{Y \in L^0_{P+} \colon \forall X \in \mathcal{C} \ E_P[XY] \leq 1\}.
    \end{equation*}
    Then $\mathcal{C}^\circ$ is a non-empty, $P$-closed, convex, and solid subset of $L^0_{P+}$, and the bipolar
    \begin{equation} \label{eq:BSrep}
        \mathcal{C}^{\circ\circ} := \{X \in L^0_{P+} \colon \forall Y \in \mathcal{C}^\circ \ E_P[XY] \leq 1\}
    \end{equation}
    of $\mathcal C$ is the smallest $P$-closed, convex, solid set in $L^0_{c+}$ containing $\mathcal C$. In particular if $\mathcal C$ is $P$-closed, convex, and solid, then $\mathcal{C} = \mathcal{C}^{\circ\circ}$.   
\end{thm}

$P$-closedness in Theorem~\ref{thm:BS} means that the respective set is closed under convergence in probability with respect to $P$. The definition of solidness is recalled next: 

\begin{defi}
    Let $\mathcal{C} \subseteq L^{0}_{c}$. $\mathcal{C}$ is called solid in $L^{0}_{c}$ if $X \in \mathcal{C}$, $Y\in L^0_c$ and $|Y| \preccurlyeq |X|$ imply $Y \in \mathcal{C}$. $\mathcal{C}$ is solid in $L^{0}_{c+}$ if $\mathcal{C} \subseteq L^{0}_{c+}$ and $X \in \mathcal{C}$, $Y\in L^0_{c+}$ and $Y \preccurlyeq X$ imply $Y \in \mathcal{C}$. We simply say that $\mathcal C$ is solid, if $\mathcal C$ is either solid in $L^0_c$ or solid in $L^0_{c+}$.
\end{defi}

Note that our usage of the term 'solid' should not cause confusion, because, apart from the empty set, a set which is solid in $L^0_{c+}$ cannot be solid in $L^0_c$ and vice versa.  In fact, a non-empty solid set in $L^0_{c+}$ only comprises non-negative elements while a non-empty solid set $\mathcal{C}$ in $L^0_c$ is symmetric in the sense that $X\in \mathcal{C}$ implies $-X\in \mathcal{C}$ (even $YX\in \mathcal{C}$ for all $Y\in L^0_c$ which take values in $\{-1,1\}$). Indeed, one verifies that  the intersection of a solid set in $L^0_c$ with the positive cone $L^0_{c+}$ is a solid set in $L^0_{c+}$.

In Theorem~\ref{thm:BS} we have $\mathcal{P}=\{P\}$, and the subset $\mathcal{C} \subseteq L^{0}_{P+}$ is solid if and only if $X \in \mathcal{C}$, $Y\in L^0_{P+}$, and $Y \leq_P X$  imply $Y \in \mathcal{C}$.

We also like to mention a useful strengthening of Theorem~\ref{thm:BS}, still with ambient space $L^0_{P+}$, given in \cite[Corollary 2.7]{KS2011}:

\begin{thm} \label{thm:KS}
    Let $P\in \mathfrak{P}(\Omega)$ and $\mathcal{C}\subseteq L^0_{P+}$ be non-empty. Define the polar of $\mathcal{C}$ as
    \begin{equation*}
        \mathcal{C}^\circ := \{Y \in L^\infty_{P+} \colon \forall X \in \mathcal{C} \ E_P[X Y] \leq 1\}.
    \end{equation*}
    Then $\mathcal{C}^\circ$ is a non-empty, $\sigma(L^\infty_{P},L^\infty_{P})$-closed, convex, solid subset of $L^\infty_{P}$, and the bipolar
    \begin{equation} \label{eq:KSrep}
        \mathcal{C}^{\circ\circ} := \{X \in L^0_{P+} \colon \forall Y \in \mathcal{C}^\circ \ E_P[X Y] \leq 1\}
    \end{equation}
    of $\mathcal C$ is the smallest $P$-closed, convex, solid set in $L^0_{c+}$ containing $\mathcal C$. In particular if $\mathcal C$ is $P$-closed, convex, and solid, then $\mathcal{C}=\mathcal{C}^{\circ\circ}$.  
\end{thm}

The important difference between Theorems~\ref{thm:BS} and \ref{thm:KS} is that the latter replaces the dual cone $L^0_{P+}$ of Theorem~\ref{thm:BS} by $L^\infty_{P+}$. The boundedness of elements in $L^\infty_{P+}$ will prove helpful when deriving robust bipolar theorems on $L^0_{c+}$ by {\em lifting} those on $L^0_{P+}$ for $P\in \mathcal P$, see Section~\ref{sec:bip:thm}. Note that by solidness and monotone convergence one directly verifies that the sets in \eqref{eq:BSrep} and \eqref{eq:KSrep} indeed coincide.

\subsection{A Reverse Perspective}

In this section we collect some simple observations on necessary conditions for a bipolar representation which will, however, set the direction of our further studies.

\begin{prop} \label{prop:pSensi}
    Let $\mathcal{X}\subseteq L^0_c$ be a non-empty convex subset and suppose that the non-empty set $\mathcal C\subseteq \mathcal{X}$ admits a representation
    \begin{equation} \label{eq:bipolar:1}
        \mathcal{C} = \{X \in \mathcal{X} \colon \forall h \in \mathcal{K} \ h(X) \leq 1\}
    \end{equation}
    where $\mathcal{K}$ denotes a non-empty set of functions $h:\mathcal{X}\to \R\cup\{-\infty,\infty\}$. 
    \begin{enumerate}[itemindent=25pt, leftmargin=0pt, nosep]
        \item If each $h\in \mathcal{K}$ is dominated by a probability measure $Q\in \mathfrak{P}_c(\Omega)$ in the sense that
        \begin{equation*}
            \forall X, Y \in \mathcal{X} \quad Q[X = Y] = 1 \Longrightarrow h(X) = h(Y),
        \end{equation*}
        then $\mathcal{C}$ is $\mathcal P$-sensitive. Any set $\mathcal Q\subseteq \mathfrak{P}_c(\Omega)$ such that every $h\in \mathcal K$ is dominated by some $Q\in \mathcal Q$ serves as reduction set for $\mathcal C$.
        
        \item If the functions $h$ are convex, then $\mathcal{C}$ is necessarily convex.
        
        \item If the functions $h$ are monotone with respect to some partial order $\triangleleft$ on $\mathcal X$, i.e., for all $X, Y\in \mathcal{X}$ we have that $X\triangleleft Y$ implies $h(X)\leq h(Y)$, then $\mathcal{C}$ is monotone with respect to $\triangleleft$, i.e., $Y\in \mathcal C$, $X\in \mathcal{X}$ and $X\triangleleft Y$ imply $X\in \mathcal C$.
        
        \item If the functions $h$ are (sequentially) lower semi-continuous with respect to some topology $\tau$ on $\mathcal X$, then $\mathcal C$ is necessarily (sequentially) $\tau$-closed.
    \end{enumerate}  
\end{prop}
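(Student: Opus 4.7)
The common thread across the four parts is to take a candidate $X \in \mathcal{X}$ satisfying the structural hypothesis in question and to verify $X \in \mathcal{C}$ by checking the defining inequalities $h(X) \leq 1$ one $h \in \mathcal{K}$ at a time. All four hypotheses on $\mathcal{K}$ in (1)--(4) are pointwise conditions on individual $h$'s, so the verification reduces to a direct application of the respective property combined with the fact that every $Y \in \mathcal{C}$ already satisfies $h(Y) \leq 1$. The containment $X \in \mathcal{X}$ needed to evaluate each $h$ will come either from the convexity of $\mathcal{X}$ (parts 2, 3) or from the candidate being constructed inside $\mathcal{X}$ (parts 1, 4).

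Part (1) is the substantive claim. Fix a set $\mathcal{Q} \subset \mathfrak{P}_c(\Omega)$ with the property that for each $h \in \mathcal{K}$ there is some $Q_h \in \mathcal{Q}$ dominating $h$, and take $X \in \mathcal{X}$ with $j_Q(X) \in j_Q(\mathcal{C})$ for every $Q \in \mathcal{Q}$. For each such $Q$, select a representative $Y^Q \in \mathcal{C}$ with $Q(X = Y^Q) = 1$. For any fixed $h \in \mathcal{K}$ the domination assumption then gives $h(X) = h(Y^{Q_h})$, which is bounded by $1$ because $Y^{Q_h} \in \mathcal{C}$. Taking the intersection over $h$, we obtain $X \in \mathcal{C}$, showing that $\mathcal{C}$ satisfies \eqref{eq:reduction} with reduction set $\mathcal{Q}$; Lemma~\ref{lem:pSensi:redSet}(2) then upgrades this to $\mathcal{P}$-sensitivity.

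Parts (2)--(4) are one-line verifications of the same template. For (2), convexity of $\mathcal{X}$ places $\lambda X + (1-\lambda)Y$ in $\mathcal{X}$ for $X, Y \in \mathcal{C}$ and $\lambda \in [0,1]$, and convexity of each $h$ then propagates the bound $h \leq 1$ along the convex combination. For (3), the assumed monotonicity of each $h$ yields $h(X) \leq h(Y) \leq 1$ whenever $Y \in \mathcal{C}$, $X \in \mathcal{X}$, and $X \triangleleft Y$. For (4), for any (sequential) $\tau$-convergent net $X_n \to X$ in $\mathcal{X}$ with $X_n \in \mathcal{C}$, lower semicontinuity of every $h \in \mathcal{K}$ gives $h(X) \leq \liminf_n h(X_n) \leq 1$.

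The only step I expect to require care is the quantifier management in (1): we do not need a single $Q$ that dominates all of $\mathcal{K}$ at once, but rather a set $\mathcal{Q}$ in which each individual $h$ finds its own dominating measure. This matching is what allows us to swap the representative $Y^{Q_h}$ in for $X$ inside $h$ on an $h$-by-$h$ basis, and it is what determines the admissible reduction sets appearing in the statement.
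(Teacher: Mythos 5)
Your proposal is correct and follows essentially the same route as the paper: for part (1) you fix $h$, pick its dominating $Q\in\mathcal Q$, swap in a representative $Y^{Q}\in\mathcal C$ with $Q(X=Y^{Q})=1$ to get $h(X)=h(Y^{Q})\leq 1$, and conclude the reduction-set property (the paper likewise leaves parts (2)--(4) as the same one-line verifications you spell out). The only cosmetic difference is that you invoke Lemma~\ref{lem:pSensi:redSet}(2) to pass from the reduction set to $\mathcal P$-sensitivity, which the paper treats as immediate.
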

\begin{proof}
    1. Let $\mathcal Q\subseteq \mathfrak{P}_c(\Omega)$ be such that every $h\in \mathcal K$ is dominated by some $Q\in \mathcal Q$. We have to prove that if $X\in \mathcal X$ satisfies $j_Q(X)\in j_Q(\mathcal{C})$ for all $Q\in \mathcal Q$, then $X\in \mathcal C$. To this end, fix such an $X$ and let $h\in \mathcal{K}$ be arbitrary and choose $Q\in \mathcal Q$ which dominates $h$. There is $Y\in \mathcal {C}$ such that $j_Q(Y)=j_Q(X)\in j_Q(\mathcal{C})$. As $Q[X = Y] = 1$, we obtain
    \begin{equation*}
        h(X) = h(Y) \leq 1.
    \end{equation*}
    Since $h\in \mathcal{K}$ was arbitrarily chosen, we conclude that $X\in \mathcal{C}$.
    
    2., 3. and 4. are easily verified. 
\end{proof}

As our focus lies on bipolar representations for subsets of $\mathcal{X}=L^0_{c+}$, let us further refine the implications of Proposition~\ref{prop:pSensi} in that setting. If  $\mathcal{X}=L^0_{c+}$ it seems natural that the functions $h$ appearing in the representation \eqref{eq:bipolar:1} are of type $h(X)=E_P[XZ]$ for some $P\in \mathfrak{P}(\Omega)$ and $Z\in L^0_{c+}$. Under this assumption the following Corollary~\ref{cor:l0:bipolar} provides more  information. However, before we are able to state the corollary we need to introduce some further notation: Let $X_{n}$, ${n \in \N}$, and $X$ be equivalence classes of random variables with respect to the same equivalence relation on $\mathcal{L}^0$, and let $P\in \mathfrak{P}(\Omega)$ be consistent with that equivalence relation, see Section~\ref{sec:basics}.
We will write $X_n\overset{P}{\longrightarrow} X$ to indicate that $(X_{n})_{n \in \N}$ converges to $X$ in probability with respect to $P$, that is for any choice $f_n\in X_n$ and $f\in X$ the sequence of random variables $(f_{n})_{n \in \N}$ converges to $f$ in probability with respect to $P$. For a subset $\mathcal Q$ of $ \mathfrak{P}(\Omega)$ we write $X_n\overset{\mathcal Q}{\longrightarrow} X$  to indicate that every $Q\in \mathcal Q$ is consistent with the equivalence relation defining $X_n$, $n\in \N$, and $X$, and $X_n\overset{Q}{\longrightarrow} X$  for all $Q\in \mathcal Q$. 

\begin{defi}
    Let $\mathcal Q\subseteq \mathfrak{P}(\Omega)$ be non-empty.
    A set $\mathcal{C}\subseteq L^0_c$  is called $\mathcal Q$-closed if $(X_{n})_{n \in \N} \subseteq \mathcal{C}$ and  $X_n\overset{\mathcal Q}{\longrightarrow} X$ for some $X\in L^0_{c}$ implies that $X\in \mathcal C$.
\end{defi}

Note that if $\tilde{ \mathcal{Q}}\subseteq \mathcal{Q}\subseteq \mathfrak{P}(\Omega)$ and if $\mathcal C$ is $\tilde{ \mathcal{Q}}$-closed, then $\mathcal C$ is also $\mathcal Q$-closed. In particular, any $\mathcal{Q}$-closed set is $\mathfrak{P}(\Omega)$-closed, and if $\mathcal{Q}\subseteq \mathfrak{P}_c(\Omega)$, any $\mathcal{Q}$-closed set is $\mathfrak{P}_c(\Omega)$-closed.

\begin{cor} \label{cor:l0:bipolar}
    Suppose that the non-empty set $\mathcal{C} \subseteq L^0_{c+}$ admits a bipolar representation of the form
    \begin{equation*}
        \mathcal{C} = \{X \in L^{0}_{c+} \colon \forall (P, Z) \in \mathcal{K} \ E_{P}[Z X] \leq 1\}
    \end{equation*}
    where $\mathcal{K} \subseteq \mathfrak{P}_c(\Omega)\times L^{0}_{c+}$ is non-empty. Then $\mathcal{C}$ is $\mathcal P$-sensitive, convex, solid, and $\mathfrak{P}_c(\Omega)$-closed.  Let $\mathcal Q \subseteq \mathfrak{P}_c(\Omega)$ denote any set of probabilities such that for all $(P, Z) \in \mathcal{K}$ there is $Q\in \mathcal Q$ with $P\ll Q$. Then $\mathcal Q$ serves as reduction set for $\mathcal{C}$ and  $\mathcal{C}$ is in fact $\mathcal{Q}$-closed.
\end{cor}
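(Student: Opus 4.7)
My plan is to view the corollary as a special case of Proposition~\ref{prop:pSensi}, with $\mathcal{X}=L^0_{c+}$ and the family of functionals $h_{(P,Z)}\colon L^0_{c+}\to [0,\infty]$, $h_{(P,Z)}(X):=E_P[ZX]$, indexed by $(P,Z)\in \mathcal{K}$. Three of the four required properties are immediate. First, if $Q\in \mathfrak{P}_c(\Omega)$ satisfies $P\ll Q$, then $Q(X=Y)=1$ forces $P(X=Y)=1$ and hence $E_P[ZX]=E_P[ZY]$; thus each $h_{(P,Z)}$ is dominated, in the sense of Proposition~\ref{prop:pSensi}(1), by any such $Q$. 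By that part of the proposition, any set $\mathcal{Q}\subset \mathfrak{P}_c(\Omega)$ as in the statement is a reduction set for $\mathcal{C}$; choosing $\mathcal{Q}=\mathfrak{P}_c(\Omega)$ (which is legitimate since $P\ll P$ for every $P$ appearing in $\mathcal{K}$) yields $\mathcal{P}$-sensitivity. Second, each $h_{(P,Z)}$ is linear in $X$, so Proposition~\ref{prop:pSensi}(2) gives convexity of $\mathcal{C}$. Third, since $P\ll \mathcal{P}$, the relation $X\preccurlyeq Y$ forces $X\leq Y$ $P$-a.s., and combined with $Z\in L^0_{c+}$ this makes each $h_{(P,Z)}$ monotone with respect to $\preccurlyeq$ on $L^0_{c+}$; Proposition~\ref{prop:pSensi}(3) then delivers solidity of $\mathcal{C}$ in $L^0_{c+}$.

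The only remaining piece is $\mathcal{Q}$-closedness, from which $\mathfrak{P}_c(\Omega)$-closedness then follows by specialization. I would argue this by hand rather than through Proposition~\ref{prop:pSensi}(4), because $\mathcal{Q}$-closedness is a sequential notion referring to convergence under a whole family of probabilities and not to a single topology on $\mathcal{X}$. Fix $(X_n)_{n\in \N}\subset \mathcal{C}$ with $X_n\overset{\mathcal{Q}}{\longrightarrow} X\in L^0_{c+}$, pick an arbitrary $(P,Z)\in \mathcal{K}$ and a corresponding $Q\in \mathcal{Q}$ with $P\ll Q$. Then $X_n\to X$ in $Q$-probability; the absolute continuity of the integral with density $dP/dQ$ (equivalently, a diagonal subsequence argument) transfers this to convergence in $P$-probability, so along some subsequence $X_{n_k}\to X$ $P$-a.s. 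Since $Z,X_{n_k}\geq 0$ $P$-a.s., Fatou's lemma gives
\begin{equation*}
E_P[ZX]\leq \liminf_{k\to\infty}E_P[ZX_{n_k}]\leq 1.
\end{equation*}
As $(P,Z)\in \mathcal{K}$ was arbitrary, $X\in \mathcal{C}$.

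I do not anticipate any genuine obstacle here: once the identification with Proposition~\ref{prop:pSensi} is written out, each property reduces to an inspection of the bilinear pairing $(X,Z)\mapsto E_P[ZX]$, and closedness is a one-line application of Fatou. The only mildly delicate point is the passage from $Q$-convergence to $P$-convergence for $(P,Z)\in \mathcal{K}$, which is exactly where the hypothesis on $\mathcal{Q}$ (that every $P$ occurring in $\mathcal{K}$ be dominated by some $Q\in \mathcal{Q}$) enters.
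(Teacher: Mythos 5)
Your proposal is correct and follows essentially the same route as the paper: Proposition~\ref{prop:pSensi} handles $\mathcal{P}$-sensitivity (via domination of $h_{(P,Z)}$ by any $Q$ with $P\ll Q$), convexity, and solidness, while $\mathcal{Q}$-closedness is obtained by passing to a $P$-a.s.\ convergent subsequence and applying Fatou's lemma. The only cosmetic difference is that the paper phrases the closedness step as sequential lower semi-continuity of $X\mapsto E_P[ZX]$ with respect to $\mathcal{Q}$-convergence and invokes Proposition~\ref{prop:pSensi}(4), whereas you argue it directly; the underlying computation is identical.
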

\begin{proof}
    Convexity, solidness,  and $\mathcal{P}$-sensitivity with reduction set $\mathcal{Q}$ immediately follow from Proposition~\ref{prop:pSensi}. Also $\mathcal{Q}$-closedness is a consequence of  Proposition~\ref{prop:pSensi} since for any $(P, Z) \in \mathcal{K}$ the function $X\ni L^0_{c+}\mapsto E_{P}[Z X]$ is sequentially lower semi-continuous with respect to $\mathcal{Q}$-convergence. Indeed, consider any $r\in \R$ and let  $(X_{n})_{n \in \N} \subseteq L^0_{c+}$ and $X \in L^{0}_{c+}$ such that $X_n\overset{\mathcal Q}{\longrightarrow} X$ and $E_{P}[Z X_{n}]\leq r$ for all $n\in \N$.  As $P\ll Q$ for some $Q\in \mathcal Q$ and $X_n\overset{Q}{\longrightarrow} X$,  there is a subsequence $(X_{n_k})_{k\in \N}$ of $(X_{n})_{n \in \N}$ converging $Q$-a.s.\ and thus $P$-a.s.\ to $X$.
    Hence, by Fatou's lemma 
    $$E_{P}[Z X] \leq  \liminf_{k \rightarrow \infty} E_{P}[Z X_{n_k}]\leq r.$$ 
\end{proof}

Note the relation between the reduction set and the closedness of $\mathcal C$ stated in Corollary~\ref{cor:l0:bipolar}.

\subsection{Lifting Bipolar Representations}\label{sec:lifting}

As we have seen above, $\mathcal{P}$-sensitivity is necessary for a bipolar representation. In this section we will see how $\mathcal{P}$-sensitivity can be used to obtain a robust bipolar representation by lifting known bipolar theorems in dominated frameworks to the robust model $L^0_c$.

Throughout this section let $\mathcal X$ be a convex subset of $L^0_c$, and let $\mathcal{C}\subseteq \mathcal{X}$ be a non-empty $\mathcal{P}$-sensitive set with reduction set $\mathcal{Q} \subseteq \mathfrak{P}_c(\Omega)$. Further, let $\mathcal{X}_Q:=j_Q(\mathcal{X})$ and $\mathcal{C}_Q:=j_Q(\mathcal{C})$ for all $Q\in \mathcal{Q}$. For each $Q\in \mathcal{Q}$, we denote by $\mathcal{Y}_Q$ a non-empty set of mappings $l:\mathcal{X}_Q\to \R\cup\{-\infty,\infty\}$ and let
\begin{equation*}
    \mathcal{C}^\circ_Q := \{l \in \mathcal{Y}_Q \colon \forall X \in \mathcal{C}_Q \ l(X) \leq 1\}
\end{equation*}
and
\begin{equation*}
    \mathcal{C}^{\circ\circ}_Q := \{X \in \mathcal{X}_Q \colon \forall l \in \mathcal{C}^\circ_Q \ l(X) \leq 1\}.
\end{equation*}
Set
\begin{equation*}
    \mathcal{C}^\circ:=\bigcup_{Q\in \mathcal{Q}}\{l\circ j_Q \colon l \in \mathcal{C}^\circ_Q\}
\end{equation*}
and
\begin{equation*}
    \mathcal{C}^{\circ\circ} := \{X \in \mathcal{X} \colon \forall h \in \mathcal{C}^\circ \ h(X) \leq 1\}.
\end{equation*}

\begin{thm} \label{thm:bipolar:ext}
    Suppose that $\mathcal{C}_Q=\mathcal{C}^{\circ\circ}_Q$ for all $Q\in \mathcal{Q}$. Then $\mathcal{C}=\mathcal{C}^{\circ\circ}$.
\end{thm}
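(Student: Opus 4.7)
The plan is to prove the two set inclusions $\mathcal{C}\subset \mathcal{C}^{\circ\circ}$ and $\mathcal{C}^{\circ\circ}\subset \mathcal{C}$ separately. The first inclusion should be completely routine and essentially follows from the construction in \eqref{eq:lifting:polar} and \eqref{eq:lifting:bipolar} without using the hypothesis $\mathcal{C}_Q=\mathcal{C}^{\circ\circ}_Q$: for any $X\in \mathcal{C}$ and any $h=l\circ j_Q \in \mathcal{C}^{\circ}$ with $l\in \mathcal{C}^{\circ}_Q$, the element $j_Q(X)$ lies in $\mathcal{C}_Q$ by definition, and hence $h(X)=l(j_Q(X))\leq 1$ by the defining property of $l\in \mathcal{C}^{\circ}_Q$. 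Quantifying over $h\in \mathcal{C}^{\circ}$ yields $X\in \mathcal{C}^{\circ\circ}$.

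For the nontrivial inclusion $\mathcal{C}^{\circ\circ}\subset \mathcal{C}$, the key is to run the argument $Q$-wise and then invoke the reduction-set property of $\mathcal{Q}$. Concretely, I take $X\in \mathcal{C}^{\circ\circ}$, fix an arbitrary $Q\in \mathcal{Q}$, and aim to show $j_Q(X)\in \mathcal{C}_Q$. For any $l\in \mathcal{C}^{\circ}_Q$ the function $h:=l\circ j_Q$ belongs to $\mathcal{C}^{\circ}$ by \eqref{eq:lifting:polar}, so $l(j_Q(X))=h(X)\leq 1$. Since $l\in \mathcal{C}^{\circ}_Q$ was arbitrary, we deduce $j_Q(X)\in \mathcal{C}^{\circ\circ}_Q$. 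By hypothesis $\mathcal{C}^{\circ\circ}_Q=\mathcal{C}_Q$, so $j_Q(X)\in \mathcal{C}_Q= j_Q(\mathcal{C})$ for every $Q\in \mathcal{Q}$.

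Finally I use that $\mathcal{Q}$ is a reduction set for $\mathcal{C}$: the displayed identity $\mathcal{C}=\bigcap_{Q\in \mathcal{Q}} j_Q^{-1}\circ j_Q(\mathcal{C})$ immediately gives $X\in \mathcal{C}$, which closes the inclusion. This step is really where both the $\mathcal{P}$-sensitivity assumption (via the reduction-set identity) and the compatibility built into \eqref{eq:lifting:polar} interact, and it is the only place where one genuinely needs more than the definitions unpacked. No topological, convexity, or measure-theoretic input is required; the statement is a purely formal transfer result, with the only mild subtlety being the bookkeeping that $l\circ j_Q\in\mathcal{C}^{\circ}$ is well defined on all of $\mathcal{X}$ (which is ensured because $j_Q$ is defined on all of $L^0_c$, and in particular on $\mathcal{X}$).
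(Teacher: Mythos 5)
Your proposal is correct and follows essentially the same route as the paper's own proof: the easy inclusion $\mathcal{C}\subset\mathcal{C}^{\circ\circ}$ by unpacking \eqref{eq:lifting:polar}--\eqref{eq:lifting:bipolar}, then the $Q$-wise argument showing $j_Q(X)\in\mathcal{C}^{\circ\circ}_Q=\mathcal{C}_Q$ for each $Q\in\mathcal{Q}$, and finally the reduction-set identity to conclude $X\in\mathcal{C}$. No gaps.
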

\begin{proof}
    Let $X\in \mathcal{C}$, then $j_Q(X)\in \mathcal{C}_Q$ and thus $l(j_Q(X))\leq 1$ for all $l\in \mathcal{C}_Q^\circ$ and $Q\in \mathcal{Q}$. Hence, $X\in \mathcal{C}^{\circ\circ}$. 
    Now let $X\in \mathcal{C}^{\circ\circ}$. Then for any $Q\in \mathcal{Q}$ we have that $l(j_Q(X))\leq 1$ for all $l\in \mathcal{C}_Q^\circ$, that is $j_Q(X)\in \mathcal{C}^{\circ\circ}_Q$. Since, by assumption, $\mathcal{C}_Q=\mathcal{C}^{\circ\circ}_Q$ for all $Q \in \mathcal{Q}$, we obtain $j_Q(X)\in \mathcal{C}_Q$ for all $Q \in \mathcal{Q}$. As $\mathcal{Q}$ is a reduction set for $\mathcal{C}$, we conclude that $X\in \mathcal C$.
\end{proof}

Clearly, the assumption of Theorem~\ref{thm:bipolar:ext} that $\mathcal{C}_{Q} = \mathcal{C}_{Q}^{\circ\circ}$ holds for all $Q \in \mathcal{Q}$ is rather abstract and does not provide a good bipolar theorem at first sight. However, we will use Theorems~\ref{thm:BS} and \ref{thm:KS} to conclude that under some conditions on $\mathcal C\subseteq L^0_{c+}$, each $\mathcal{C}_{Q}\subseteq L^0_{Q+}$ admits a bipolar representation $\mathcal{C}_{Q} = \mathcal{C}_{Q}^{\circ\circ}$. Then Theorem~\ref{thm:bipolar:ext} allows to lift this bipolar representation to $L^0_{c+}$. The required conditions on $\mathcal C$ will, of course, comprise convexity and solidness with respect to the $\mathcal P$-quasi-sure order (Corollary~\ref{cor:l0:bipolar}), and  we also need to discuss reasonable closure properties (again Corollary~\ref{cor:l0:bipolar}). The latter is the purpose of the next section.  

\section{Concepts of Closedness under Uncertainty} \label{sec:clsd}

Recall the discussion from Section~\ref{sec:lifting}. If we want to apply  Theorem~\ref{thm:BS} or \ref{thm:KS}, we need to ensure that every $j_Q(\mathcal{C})$ is $Q$-closed. A straightforward way of achieving this is to assume that $\mathcal{C}$ is $Q$-closed for each $Q \in \mathcal{Q}$. Yet, a still sufficient and indeed also necessary property is 
the following weaker requirement:

\begin{defi}
    Let $\mathcal C \subseteq L^0_c$ and $Q\in \mathfrak{P}_c(\Omega)$. $\mathcal{C}$ is called locally $Q$-closed if for each sequence $(X_{n})_{n \in \N} \subseteq \mathcal{C}$ and $X \in L^{0}_{c}$ such that $X_{n} \overset{Q}{\longrightarrow} X$ there exists $Y \in \mathcal{C}$ such that $j_{Q}(X) = j_{Q}(Y)$.
\end{defi}

\begin{lem} \label{lem:loc:clsd}
    Let $\mathcal{C} \subseteq L^{0}_{c}$ and $Q \in \mathfrak{P}_c(\Omega)$. $\mathcal{C}$ is locally $Q$-closed if and only if  $j_Q(\mathcal{C})$ is $Q$-closed.
\end{lem}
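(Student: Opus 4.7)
The proof is essentially a direct unpacking of definitions, hinging on the observation that the convergence notion $\overset{Q}{\longrightarrow}$ transfers between $L^0_c$ and $L^0_Q$ via the projection $j_Q$, because any representative $f \in \mathcal{L}^0$ of an equivalence class $X \in L^0_c$ is simultaneously a representative of $j_Q(X) \in L^0_Q$.

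My plan is to handle the two implications separately. For the ``only if'' direction, I will start with a sequence $(A_n)_{n \in \N} \subset j_Q(\mathcal{C})$ and a limit $A \in L^0_Q$ with $A_n \overset{Q}{\longrightarrow} A$ (in $L^0_Q$), and select preimages $X_n \in \mathcal{C}$ with $j_Q(X_n) = A_n$. To produce a limit in $L^0_c$, I pick any representative $f \in A$ and set $X := [f]_c \in L^0_c$, so that $j_Q(X) = A$ by construction. The key step is then to verify $X_n \overset{Q}{\longrightarrow} X$ in $L^0_c$: choosing any $f_n \in X_n$, this same random variable is a representative of $A_n = j_Q(X_n)$, and $A_n \to A$ in $Q$-probability means $f_n \to f$ in $Q$-probability, which is precisely $X_n \overset{Q}{\longrightarrow} X$. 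Local $Q$-closedness of $\mathcal{C}$ then delivers some $Y \in \mathcal{C}$ with $j_Q(Y) = j_Q(X) = A$, so $A \in j_Q(\mathcal{C})$.

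For the ``if'' direction, I start with a sequence $(X_n) \subset \mathcal{C}$ and $X \in L^0_c$ satisfying $X_n \overset{Q}{\longrightarrow} X$, apply $j_Q$, and again use the representative-matching observation to deduce $j_Q(X_n) \overset{Q}{\longrightarrow} j_Q(X)$ in $L^0_Q$. Since $j_Q(X_n) \in j_Q(\mathcal{C})$ and $j_Q(\mathcal{C})$ is $Q$-closed, we get $j_Q(X) \in j_Q(\mathcal{C})$, i.e., there exists $Y \in \mathcal{C}$ with $j_Q(Y) = j_Q(X)$, as required.

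I do not anticipate any real obstacle; the only subtle point worth being explicit about is the compatibility of the two convergence notions under $j_Q$, which is immediate once one remembers that $\mathcal{P}$-q.s.\ equality is finer than $Q$-a.s.\ equality (so that $[f]_c \subset [f]_Q$ as sets of random variables, and every representative of $[f]_c$ is a representative of $[f]_Q$). Once that is noted, both directions reduce to a one-line invocation of the respective closure property.
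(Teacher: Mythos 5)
Your proposal is correct and follows essentially the same route as the paper's proof: in both directions you lift/push sequences through $j_Q$ by choosing representatives, use that convergence in $Q$-probability is preserved because every representative of $[f]_c$ is a representative of $[f]_Q$ (as $Q\ll\mathcal{P}$), and then invoke the respective closedness property. The explicit construction $X=[f]_c$ for $f\in A$ is exactly the paper's step of picking some $X\in L^0_c$ with $j_Q(X)=X^Q$, so there is nothing to add.
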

\begin{proof} We may assume that $\mathcal C\neq \emptyset$.
    Suppose that $\mathcal{C}$ is locally $Q$-closed. 
    Let $(X^{Q}_{n})_{n \in \N} \subseteq j_Q(\mathcal{C})$ and $X^Q\in L^0_{Q}$ such that
    $X^{Q}_{n} \overset{Q}{\longrightarrow} X^{Q}$.
        Pick $(X_{n})_{n \in \N} \subseteq \mathcal{C}$ such that $j_{Q}(X_{n})=X^{Q}_{n}$ and $X \in L^0_{c}$ such that $j_Q(X) = X^{Q}$. It follows that $X_{n} \overset{Q}{\longrightarrow} X$.
    As $\mathcal{C}$ is locally $Q$-closed, there exists $Y \in \mathcal{C}$ such that $ j_Q(\mathcal{C})\ni j_{Q}(Y)=  j_{Q}(X) = X^{Q}$. Thus, $\mathcal{C}_{Q}$ is $Q$-closed. \\
    Conversely, if $j_Q(\mathcal{C})$ is $Q$-closed and $(X_{n})_{n \in \N} \subseteq \mathcal{C}$ and $X\in L^0_{Q}$ such that
    $X_{n} \overset{Q}{\longrightarrow} X$, then $j_Q(X_{n}) \overset{Q}{\longrightarrow} j_Q(X)$ in $L^0_Q$ and thus $j_Q(X)\in j_Q(\mathcal{C})$. Now let $Y\in \mathcal C$ such that $j_Q(Y)=j_Q(X)$.
\end{proof}

So far we have encountered two concepts of closedness based on a reduction set $\mathcal{Q}$ for $\mathcal{C}$: $\mathcal{Q}$-closedness appeared as a necessary condition in Corollary~\ref{cor:l0:bipolar}, whereas local $Q$-closedness for all $Q\in \mathcal{Q}$  enables a lifting of Theorems~\ref{thm:BS} and \ref{thm:KS}. Interestingly, both notions are equivalent for $\mathcal{P}$-sensitive and solid sets:

\begin{prop} \label{prop:clsd:allQ:loc}
    Suppose that $\mathcal{C} \subseteq L^{0}_{c}$ is $\mathcal{P}$-sensitive with reduction set $\mathcal{Q} \subseteq \mathfrak{P}_c(\Omega)$. If $\mathcal C$ is locally $Q$-closed for all $Q \in \mathcal{Q}$, then $\mathcal C$ is $\mathcal Q$-closed. If additionally $\mathcal{C}$ is solid, then $\mathcal C$ is locally $Q$-closed for all $Q \in \mathcal{Q}$ if and only if $\mathcal C$ is $\mathcal Q$-closed.
\end{prop}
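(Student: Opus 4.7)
The forward implication is immediate: for any $(X_n)_{n \in \N} \subset \mathcal{C}$ and $X \in L^0_c$ with $X_n\overset{\mathcal Q}{\longrightarrow} X$, every $Q \in \mathcal Q$ satisfies $X_n\overset{Q}{\longrightarrow} X$, so local $Q$-closedness produces some $Y_Q \in \mathcal C$ with $j_Q(Y_Q) = j_Q(X)$. Hence $j_Q(X) \in j_Q(\mathcal{C})$ for every $Q \in \mathcal Q$, and since $\mathcal Q$ is a reduction set for $\mathcal C$, $X \in \mathcal{C}$.

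For the nontrivial converse (assuming additionally that $\mathcal C$ is solid and $\mathcal Q$-closed), fix $Q \in \mathcal Q$ and a sequence $(X_n)_{n \in \N} \subset \mathcal C$ with $X_n\overset{Q}{\longrightarrow} X$ for some $X \in L^0_c$. The plan is to build a sequence $(Y_n)_{n \in \N} \subset \mathcal C$ that converges \emph{pointwise} on $\Omega$---and hence is $Q'$-a.s., and a fortiori in $Q'$-probability, convergent for every $Q' \in \mathcal Q$---to some $Y \in L^0_c$ with $j_Q(Y) = j_Q(X)$. Then $\mathcal Q$-closedness will place $Y$ in $\mathcal C$, witnessing local $Q$-closedness.

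To carry this out, pick representatives $f \in X$, $f_n \in X_n$ and pass to a subsequence along which $f_n \to f$ $Q$-a.s., say on a measurable set $A \in \mathcal F$ with $Q(A) = 1$. The idea is to dominate the approximation in absolute value by $|f|$ and then restrict to $A$, which simultaneously yields solidness domination on all of $\Omega$ and pointwise convergence everywhere. Concretely, if $\mathcal C$ is solid in $L^0_{c}$, set $Y_n := \mathrm{sign}(f)\bigl(|f|\wedge |f_n|\bigr)\chi_A$, so that $|Y_n| \leq |f_n|$ and hence $[Y_n]_c \in \mathcal C$ by solidness; if $\mathcal C$ is solid in $L^0_{c+}$, we may assume $f_n \geq 0$ pointwise, so $f \geq 0$ on $A$, and set $Y_n := (f^+ \wedge f_n)\chi_A$, for which $0 \leq Y_n \leq f_n$ and again $[Y_n]_c \in \mathcal C$. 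In either case $Y_n \to f\chi_A$ pointwise on $\Omega$ (by construction on $A^c$, and by $Q$-a.s.\ convergence of $f_n$ to $f$ on $A$).

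Applying $\mathcal Q$-closedness then gives $[f\chi_A]_c \in \mathcal C$, and $Q(A)=1$ forces $j_Q([f\chi_A]_c) = j_Q(X)$, completing the proof. The chief obstacle I anticipate is exactly the asymmetry between $Q$-a.s.\ convergence (all we get for free from $X_n\overset{Q}{\longrightarrow} X$) and convergence in every $Q' \in \mathcal Q$ (what $\mathcal Q$-closedness needs): the multiplication by $\chi_A$ is the device bridging this gap, upgrading $Q$-a.s.\ behaviour into pointwise behaviour usable for all $Q'$ simultaneously, while the two versions of solidness differ only in whether absolute-value or positive-part truncation is appropriate.
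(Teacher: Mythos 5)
Your proof is correct and follows essentially the same route as the paper: pass to an a.s.\ convergent subsequence, multiply by the indicator of the ($Q$-full) convergence set so that convergence becomes pointwise and hence valid under every $Q'\in\mathcal Q$, keep the modified sequence in $\mathcal C$ via solidness, and invoke $\mathcal Q$-closedness together with the reduction-set property. The only (immaterial) difference is your additional truncation by $\lvert f\rvert$ (resp.\ $f^{+}$); the paper simply uses $X_{n_k}\mathbf{1}_{A}$, which is already dominated by $\lvert X_{n_k}\rvert$.
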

\begin{proof} Assume that $\mathcal C\neq \emptyset$.
    Suppose $\mathcal{C}$ is locally $Q$-closed for each $Q \in \mathcal{Q}$. Let $(X_{n})_{n \in \N} \subseteq \mathcal{C}$ and $X \in L^{0}_{c}$ such that $X_{n} \overset{\mathcal Q}{\longrightarrow} X$. By local $Q$-closedness, for each $Q \in \mathcal{Q}$, there exists $Y_{Q} \in \mathcal{C}$  such that $j_{Q}(X) = j_{Q}(Y_{Q})\in j_Q(\mathcal{C})$. Since $\mathcal Q$ is a reduction set for $\mathcal C$ we obtain $X \in \mathcal{C}$. Hence, $\mathcal{C}$ is $\mathcal Q$-closed. \\
    Now suppose that $\mathcal{C}$ is also solid and let $\mathcal{C}$ be $\mathcal{Q}$-closed. Fix $Q \in \mathcal{Q}$ and let $(X_{n})_{n \in \N} \subseteq \mathcal{C}$ such that $X_{n} \overset{Q}{\longrightarrow} X$ for some $X \in L^{0}_{c}$. Then there exists a subsequence $(X_{n_{k}})_{k \in \N}$ of $(X_{n})_{n \in \N}$ such that $X_{n_{k}} \longrightarrow X$ $Q$-a.s. For an arbitrary choice $g_{n_{k}} \in X_{n_{k}}$, $k \in \N$, and $g \in X$ set
    \begin{equation*}
        \{g_{n_{k}} \rightarrow g\} := \{\omega \in \Omega \colon \lim_{k\to \infty} g_{n_{k}}(\omega) = g(\omega)\}.
    \end{equation*}
    Note that $Q[\{g_{n_{k}} \to g\}] = 1$ and
    \begin{equation*}
        \forall \omega\in \Omega \quad g_{n_{k}}(\omega) \chi_{\{g_{n_{k}} \rightarrow g\}}(\omega) \to g(\omega) \chi_{\{g_{n_{k}} \rightarrow g\}}(\omega).
    \end{equation*}
    The latter and the fact that every $\tilde Q\in \mathcal Q$ is consistent with the $\mathcal P$-q.s.-order implies 
    \begin{equation*}
        X_{n_{k}} \mathbf{1}_{\{g_{n_{k}} \rightarrow g\}} \overset{\tilde Q}{\longrightarrow} X \mathbf{1}_{\{g_{n_{k}} \rightarrow g\}}
    \end{equation*}
    for all $\tilde Q\in \mathcal Q$. By solidness of $\mathcal{C}$ we have $X_{n_{k}} \mathbf{1}_{\{g_{n_{k}} \rightarrow g\}} \in \mathcal{C}$ for all $k\in \N$, and thus, by  $\mathcal Q$-closedness, $X \mathbf{1}_{\{g_{n_{k}} \rightarrow g\}} \in \mathcal{C}$. Since $Q[\{g_{n_{k}} \to g\}] = 1$ we have $j_{Q}(X) = j_{Q}(X \mathbf{1}_{\{g_{n_{k}} \rightarrow g\}})$. Therefore, $\mathcal{C}$ is locally $Q$-closed.
\end{proof}

One of the more commonly used closedness concepts in robust frameworks is order closedness, see for instance \cite{GM2020} or \cite{LMS2022}.
 
\begin{defi}
    \begin{enumerate}[itemindent=25pt, leftmargin=0pt, nosep]
        \item A set $\mathcal{C} \subseteq L^{0}_{c}$ is order closed if for any net $(X_{\alpha})_{\alpha \in I} \subseteq \mathcal{C}$ and $X \in L^{0}_{c}$ such that $X_{\alpha} \order{c} X$ we have $X \in \mathcal{C}$.
        
        \item A set $\mathcal{C} \subseteq L^{0}_{c}$ is sequentially order closed if for any sequence $(X_{n})_{n \in \N} \subseteq \mathcal{C}$  and $X \in L^{0}_{c}$ such that $X_{n} \order{c} X$ we have $X \in \mathcal{C}$.
    \end{enumerate}
\end{defi}

In the dominated case,  for $Q \in \mathfrak{P}(\Omega)$, we know by the super Dedekind completeness of $L^{0}_{Q}$ (see \cite[Definition~1.43]{AB2003})  that $\mathcal{C}\subset L^{0}_{Q}$ is  order closed if and only if $\mathcal{C}$ is sequentially order closed, and for solid sets this is well-known (see \cite[Lemma 4.1]{LMS2022}) to be equivalent to $Q$-closedness:

\begin{lem} \label{lem:Qclosed:dom}
    Let $Q \in \mathfrak{P}(\Omega)$ and $\mathcal{C}\subseteq L^{0}_{Q}$ be solid. Then the following are equivalent:
    \begin{enumerate}[itemindent=25pt, leftmargin=0pt, nosep]
        \item $\mathcal{C}$ is order closed (with respect to  the $Q$-a.s.\ order).
        
        \item $\mathcal{C}$ is sequentially order closed.
        
        \item $\mathcal{C}$ is $Q$-closed.
    \end{enumerate}
\end{lem}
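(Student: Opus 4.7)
The plan is to establish the cycle $(1) \Rightarrow (2) \Rightarrow (3) \Rightarrow (2) \Rightarrow (1)$. The implication $(1) \Rightarrow (2)$ is immediate because every sequence is a net indexed by the directed set $\mathbb{N}$, so all the substantive work lies in the equivalence $(2) \Leftrightarrow (3)$ and in the closing implication $(2) \Rightarrow (1)$. Solidness will actually play no role in what follows; the essential structural inputs are the relation between order convergence and $Q$-a.s.\ convergence in $L^{0}_{Q}$, together with the super Dedekind completeness of $L^{0}_{Q}$.

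For $(3) \Rightarrow (2)$, I would use that $X_n \order{Q} X$ forces $|X_n - X| \leq_Q Y_n$ for some $Y_n \downarrow 0$. Since $Y_n \downarrow 0$ in $L^{0}_{Q}$ is equivalent to $Y_n \to 0$ $Q$-a.s., the sequence $(X_n)$ converges to $X$ in probability, and $Q$-closedness gives $X \in \mathcal{C}$. Conversely, for $(2) \Rightarrow (3)$, given $(X_n) \subset \mathcal{C}$ with $X_n \overset{Q}{\longrightarrow} X$, I would pass to a subsequence $X_{n_k} \to X$ $Q$-a.s.\ and set $Y_k := \sup_{j \geq k}|X_{n_j} - X|$. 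The almost sure convergence of the subsequence ensures $Y_k < \infty$ $Q$-a.s., so $Y_k \in L^{0}_{Q}$, $Y_k \downarrow 0$, and $|X_{n_k} - X| \leq_Q Y_k$. Hence $X_{n_k} \order{Q} X$, and sequential order closedness delivers $X \in \mathcal{C}$.

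The main obstacle is $(2) \Rightarrow (1)$, which is where super Dedekind completeness of $L^{0}_{Q}$ is essential: it allows the reduction of an arbitrary order convergent net to an order convergent sequence. Given a net $(X_\alpha)_{\alpha \in I} \subset \mathcal{C}$ with $X_\alpha \order{Q} X$ witnessed by a decreasing net $Y_\alpha \downarrow 0$ satisfying $|X_\alpha - X| \leq_Q Y_\alpha$, super Dedekind completeness produces a countable sub-collection of $(Y_\alpha)$ with infimum still equal to $0$. Using the directedness of $I$ and the decreasingness of $(Y_\alpha)$, one then selects indices $\alpha_1 \leq \alpha_2 \leq \cdots$ with $Y_{\alpha_n} \downarrow 0$, so that $|X_{\alpha_n} - X| \leq_Q Y_{\alpha_n}$ shows $X_{\alpha_n} \order{Q} X$. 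Sequential order closedness then yields $X \in \mathcal{C}$, completing the cycle.
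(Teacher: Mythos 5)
Your proof is correct. Note that the paper does not actually prove this lemma --- it is quoted from \citet[Lemma 4.1]{LMS2022} and used as a black box --- so there is no in-paper argument to compare against; your write-up supplies the standard proof that the citation is standing in for. The two pillars you identify are exactly the right ones: the identification of sequential order convergence in $L^{0}_{Q}$ with $Q$-a.s.\ convergence (via $Y_k := \sup_{j \geq k}\lvert X_{n_j} - X\rvert$, which is $Q$-a.s.\ finite precisely because the subsequence converges a.s.), which together with the subsequence principle for convergence in probability gives $(2) \Leftrightarrow (3)$; and the super Dedekind completeness of $L^{0}_{Q}$, which extracts from a decreasing net $(Y_\alpha)$ with $\inf_\alpha Y_\alpha = 0$ a countable subfamily with the same infimum, after which directedness of $I$ and monotonicity of the net upgrade this to an increasing chain of indices, giving $(2) \Rightarrow (1)$. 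This is the same mechanism the paper itself alludes to in the sentence preceding the lemma. Your observation that solidity is never used is accurate in this dominated setting and worth noting: it is only in the robust analogues (Proposition~\ref{prop:clsd:allQ:loc} and Theorem~\ref{thm:clsd:seqO:allQ:locQ}) that solidity becomes essential, namely for truncating along the convergence set $\{g_{n_k} \to g\}$ to pass from $\mathcal{Q}$-closedness to local $Q$-closedness.
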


Possessing some appealing features, in robust frameworks, authors have tended to focus on order convergence as a generalization of $Q$-closedness, see \cite{GM2020} or \cite{LMS2022}. However, it turns out that in the non-dominated case order closedness is generally not equivalent to sequential order closedness, see for instance Examples~\ref{exmp:seqOrdNotOrdCont} and \ref{exmp:pSensi:oClsd}, and that, in fact, it is the latter notion which is closely related to the other natural robustifications of $Q$-closedness we have encountered so far, namely $\mathcal Q$-closedness or local $Q$-closedness for all $Q \in \mathcal{Q}$, see Theorem~\ref{thm:clsd:seqO:allQ:locQ} below. Before we state  Theorem~\ref{thm:clsd:seqO:allQ:locQ} we need two auxiliary results: 

\begin{lem}\label{lem:solid}
    Suppose that $\mathcal{C} \subseteq L^{0}_c$ is solid. Let $Q\in \mathfrak{P}_c(\Omega)$. Then $j_Q(\mathcal C)$ is solid.
\end{lem}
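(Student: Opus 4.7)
The plan is to exploit the fact that solidness of $\mathcal{C}$ is formulated with respect to the $\mathcal{P}$-quasi-sure order, while the candidate domination for an element of $j_Q(\mathcal{C})$ only lives at the $Q$-a.s.\ level. So given an element $X^Q\in j_Q(\mathcal{C})$ and a candidate $Y^Q\in L^0_Q$ dominated by $X^Q$ in $L^0_Q$, I would first lift both to representatives in $L^0_c$ and then apply a lattice truncation that forces the comparison to hold $\mathcal{P}$-q.s.\ without altering the $Q$-equivalence class of the candidate. Solidness of $\mathcal{C}$ then yields the truncation in $\mathcal{C}$, and applying $j_Q$ returns the original $Y^Q$.

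In the case $\mathcal{C}$ is solid in $L^0_c$, fix $X^Q\in j_Q(\mathcal{C})$ and $Y^Q\in L^0_Q$ with $|Y^Q|\leq_Q |X^Q|$. Choose $X\in\mathcal{C}$ with $j_Q(X)=X^Q$ and any $Y\in L^0_c$ with $j_Q(Y)=Y^Q$. Set
\[
    \tilde Y := (Y \wedge |X|) \vee (-|X|).
\]
Using the lattice operations in $L^0_c$ recalled in Section~\ref{sec:basics}, one immediately verifies $|\tilde Y|\preccurlyeq |X|$, so solidness of $\mathcal{C}$ gives $\tilde Y\in\mathcal{C}$. On the event $\{|Y|\leq |X|\}$, which has $Q$-measure one by construction, representatives of $\tilde Y$ and $Y$ coincide pointwise, so $j_Q(\tilde Y)=j_Q(Y)=Y^Q$. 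Hence $Y^Q\in j_Q(\mathcal{C})$.

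In the case $\mathcal{C}$ is solid in $L^0_{c+}$, note first that $\mathcal{C}\subset L^0_{c+}$ trivially yields $j_Q(\mathcal{C})\subset L^0_{Q+}$. Given $X^Q\in j_Q(\mathcal{C})$ and $Y^Q\in L^0_{Q+}$ with $Y^Q\leq_Q X^Q$, pick $X\in\mathcal{C}$ (so $X\succcurlyeq 0$) with $j_Q(X)=X^Q$ and $Y\in L^0_c$ with $j_Q(Y)=Y^Q$, and set $\tilde Y := (Y\vee 0)\wedge X$. Then $0\preccurlyeq \tilde Y\preccurlyeq X$, so $\tilde Y\in\mathcal{C}$ by solidness, while $\tilde Y=Y$ on the $Q$-full-measure event $\{0\leq Y\leq X\}$, giving $j_Q(\tilde Y)=Y^Q$.

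There is no serious obstacle here; the only subtle point is that an arbitrary representative $Y$ of $Y^Q$ need not satisfy the comparison $\mathcal{P}$-q.s., and the lattice truncation above is the natural device for repairing this without leaving the $Q$-equivalence class of $Y^Q$.
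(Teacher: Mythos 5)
Your proof is correct and follows essentially the same strategy as the paper's: modify the candidate $Y$ on a $Q$-null set so that the domination $|Y|\preccurlyeq|X|$ holds $\mathcal{P}$-q.s., apply solidness of $\mathcal{C}$, and note that the $Q$-equivalence class is unchanged. The only cosmetic difference is the repair device --- the paper multiplies representatives by the indicator $\chi_{\{|f|\geq|g|\}}$, while you clamp $Y$ via lattice operations; both are valid.
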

\begin{proof}
    Suppose that $\mathcal{C}\neq \emptyset$ is solid in $L^{0}_c$ and that  $X^Q\in j_Q(\mathcal C)$ and $Y^Q\in L^0_Q$ satisfy $|Y^Q| \leq_Q |X^Q|$. Pick $\tilde X\in \mathcal C$ such that $j_Q(\tilde X)=X^Q$. Further let $f\in \tilde X$ and $g\in Y^Q$ and set $X := [f\chi_{\{|f|\geq |g|\}}]_c$ and $Y := [g\chi_{\{|f|\geq |g|\}}]_c$. Note that $Q[|f|\geq |g|] = 1$ and therefore $j_Q(X) = X^Q$. We have $|Y| \preccurlyeq |X| \preccurlyeq |\tilde X|$, and thus $Y \in \mathcal{C}$. Since $j_Q(Y) = Y^Q$, we conclude that $Y^Q \in j_Q(\mathcal C)$, so $j_Q(\mathcal C)$ is indeed solid with respect to $\leq_Q$. A similar argument applies in the case where $\mathcal{C}$ is solid in $L^{0}_{c+}$.
\end{proof}

\begin{lem} \label{lem:OqsCl:OasCl}
    Suppose that $\emptyset \neq \mathcal{C} \subseteq L^{0}_c$ is solid and sequentially order closed, and let $Q \in \mathfrak{P}_c(\Omega)$. Then $j_Q(\mathcal C)$ is closed with respect to the $Q$-a.s.\ order in $L^0_Q$.
\end{lem}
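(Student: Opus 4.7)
The plan is to first reduce to proving sequential order closedness of $j_Q(\mathcal{C})$. By Lemma~\ref{lem:solid}, $j_Q(\mathcal{C})$ is solid in $L^0_Q$, so Lemma~\ref{lem:Qclosed:dom} implies that order closedness, sequential order closedness, and $Q$-closedness of $j_Q(\mathcal{C})$ all coincide. I will therefore aim for sequential order closedness of $j_Q(\mathcal C)$ with respect to $\leq_Q$, and then invoke the equivalence to get order closedness.

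To this end, I would fix a sequence $(X^Q_n)_{n\in \N}\subset j_Q(\mathcal{C})$ and $X^Q \in L^0_Q$ with $X^Q_n \order{Q} X^Q$, witnessed by $(Y^Q_n)_{n\in \N}\subset L^0_Q$ satisfying $Y^Q_n \downarrow 0$ and $|X^Q_n - X^Q| \leq_Q Y^Q_n$. The strategy is to lift this convergence into $L^0_c$ after an appropriate truncation. I would pick $X_n \in \mathcal{C}$ with $j_Q(X_n)=X^Q_n$, $X\in L^0_c$ with $j_Q(X)=X^Q$, representatives $f_n\in X_n$ and $f \in X$, and arbitrary representatives $h_n$ of $Y^Q_n$. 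Since the class-wise relation $Y^Q_{n+1}\leq_Q Y^Q_n$ need not be pointwise, I would pass to the pointwise running minima $g_n := h_1 \wedge \cdots \wedge h_n$, which agree $Q$-a.s.\ with $h_n$, are pointwise non-increasing, and tend to $0$ on a set $B$ with $Q(B)=1$.

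The key step is the truncation. I would define
\begin{equation*}
    A := \bigcap_{n \in \N} \{|f_n - f|\leq g_n\} \in \mathcal{F}, \qquad B := \{\omega \in \Omega \mid g_n(\omega) \to 0\} \in \mathcal{F},
\end{equation*}
both of $Q$-probability one. Solidness of $\mathcal{C}$ in $L^0_c$ yields $\tilde X_n := X_n \mathbf{1}_{A\cap B} \in \mathcal{C}$. Now set $Z_n := [g_n \chi_B]_c \in L^0_c$: pointwise monotonicity of $g_n$ and the choice of $B$ make $(Z_n)_{n\in \N}$ decreasing in $L^0_c$ with $\inf_n Z_n = 0$ (the pointwise infimum vanishes on $B$ because $g_n \to 0$ there and off $B$ because the indicator kills the term). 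A direct case check on $A\cap B$, $B \setminus A$, and $B^c$ establishes $|\tilde X_n - X\mathbf{1}_{A\cap B}| \preccurlyeq Z_n$ pointwise, hence $\tilde X_n \order{c} X\mathbf{1}_{A\cap B}$ in $L^0_c$.

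Finally, sequential order closedness of $\mathcal{C}$ forces $X \mathbf{1}_{A\cap B} \in \mathcal{C}$, and since $Q(A \cap B)=1$ I obtain $j_Q(X\mathbf{1}_{A\cap B}) = X^Q$, which proves $X^Q \in j_Q(\mathcal{C})$ and completes the argument. The principal obstacle is precisely the bridge built in the third paragraph: the order convergence information lives on the $L^0_Q$-side, where the dominating sequence merely decreases to zero $Q$-a.s., whereas applying sequential order closedness of $\mathcal{C}$ requires a dominating sequence in $L^0_c$ that decreases to zero $\mathcal{P}$-q.s. The combination of passing to the pointwise running minima $g_n$ and multiplying by the indicator $\chi_B$ is what bridges this gap.
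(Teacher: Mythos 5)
Your proof is correct and uses essentially the same key idea as the paper's: truncate by the indicator of a $Q$-full event so that the $Q$-a.s.\ order convergence becomes a genuine $\mathcal{P}$-q.s.\ (indeed pointwise) order convergence, stay inside $\mathcal{C}$ via solidness, and conclude with sequential order closedness of $\mathcal{C}$. The only difference is cosmetic: the paper first reduces to non-negative increasing sequences with existing supremum (via solidness of $j_Q(\mathcal{C})$ and \citet[Lemma 1.15]{AB2003}), which lets it avoid your explicit construction of the dominating sequence $Z_n$ from running minima, but both routes go through the same bridge.
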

\begin{proof}
    As $j_Q(\mathcal C)$ is solid according to Lemma~\ref{lem:solid}, in order to show (sequential) order closedness of $j_Q(\mathcal C)$ it suffices to consider non-negative increasing sequences $(X^{Q}_{n})_{n \in \N} \subseteq j_Q(\mathcal{C})$ (that is $0\leq _Q X^{Q}_{n}\leq_Q X^{Q}_{n+1}$ for all $n\in \N$) such that the supremum $X^Q\in L^{0}_{Q}$ of $(X^{Q}_{n})_{n \in \N} $ exists and to show that $X^Q\in j_Q(\mathcal C)$, see \cite[Lemma~1.15]{AB2003}. Pick $(\tilde{X}_{n})_{n \in \N} \subseteq \mathcal{C}$ such that $j_{Q}(\Tilde{X}_{n}) = X^{Q}_{n}$ for all $n\in \N$. Let $g \in X^{Q}$ and $g_{n} \in \Tilde{X}_{n}$ for all $n \in \N$. Consider the event
    \begin{equation*}
        A := \{\sup_{n\in \N} g_n = g\} \cap \{g_1\geq 0\} \cap \bigcap_{n\in \N} \{g_n\leq g_{n+1}\}.
    \end{equation*}
    Note that $Q[A]=1$. Set $X_{n} := [g_{n} \chi_{A}]_c$ for all $n \in \N$ and $X := [g \chi_{A}]_c$. Since $X_n\preccurlyeq \tilde X_n$ we conclude by solidness of $\mathcal C$ that $(X_{n})_{n \in \N} \subseteq \mathcal{C}$. As for all $\omega \in \Omega$ we have $$g_n(\omega)\chi_{A}(\omega)\leq g_{n+1}(\omega)\chi_{A}(\omega)\leq g(\omega)\chi_{A}(\omega), \; n\in \N,$$  and $g(\omega)\chi_{A}(\omega)=\sup_{n\in \N}g_n(\omega)\chi_{A}(\omega)$, we infer that  $X_n\preccurlyeq X_{n+1}\preccurlyeq X$, $n\in \N$, and  $X=\sup_{n\in \N} X_n$ in $(L^0_c,\preccurlyeq)$. Consequently, $X_n\order{c} X$.  
    Hence, by sequential order closedness of $\mathcal C$ we obtain $X\in \mathcal C$. Now  $j_Q(X)=X^Q$ implies $X^Q\in j_Q(\mathcal{C})$.
\end{proof}

\begin{thm} \label{thm:clsd:seqO:allQ:locQ}
    Suppose that $\mathcal{C} \subseteq L^{0}_{c}$ is solid and $\mathcal{P}$-sensitive. Let $\mathcal{Q} \subseteq \mathfrak{P}_c(\Omega)$ be a reduction set for $\mathcal C$. Then the following are equivalent:
    \begin{enumerate}[itemindent=25pt, leftmargin=0pt, nosep]
        \item $\mathcal{C}$ is sequentially order closed.
        
        \item $\mathcal{C}$ is $\mathcal{Q}$-closed.
        
        \item $\mathcal{C}$ is locally $Q$-closed for each $Q \in \mathcal{Q}$.
        
        \item $j_Q(\mathcal{C})$ is $Q$-closed in $L^0_Q$ for each $Q \in \mathcal{Q}$.
        
        \item $j_Q(\mathcal{C})$ is order closed with respect to the $Q$-a.s.\ order on $L^0_Q$ for each $Q \in \mathcal{Q}$.
        
        \item $j_Q(\mathcal{C})$ is sequentially order closed with respect to the $Q$-a.s.\ order on $L^0_Q$ for each $Q \in \mathcal{Q}$.
    \end{enumerate}
\end{thm}

For the proof of Theorem~\ref{thm:clsd:seqO:allQ:locQ} we need another auxiliary lemma:

\begin{lem} \label{lem:help1}
    Let $(X_n)_{n\in \N}\subseteq L^0_c$ and $Q\in \mathfrak{P}_c(\Omega)$.
    \begin{enumerate}[itemindent=25pt, leftmargin=0pt, nosep]
        \item Suppose that the  infimum (supremum) $X=\inf_{n\in \N} X_n$ ($X=\sup_{n\in \N} X_n$)  of $(X_n)_{n\in \N}$ in the $\mathcal P$-q.s.\ order  exists. Then, $j_Q(X)=\inf_{n\in \N}j_Q(X_n)$ ($j_Q(X)=\sup_{n\in \N}j_Q(X_n)$) in $L^0_Q$, i.e., $j_Q(X)$ is the infimum (supremum) of $(j_Q(X_n))_{n\in \N}$ in the $Q$-a.s.\ order. 
        
        \item Let $Y\in L^0_c$ and suppose that $X_n\order{c}Y$ in $L^0_c$. Then, $j_Q(X_n)\order{Q}j_Q(Y)$ in $L^0_Q$.
    \end{enumerate}
\end{lem}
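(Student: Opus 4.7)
The plan is: part 2 reduces directly to part 1, and within part 1 the supremum case reduces to the infimum case by applying the latter to $(-X_n)_{n\in\N}$ and invoking linearity of $j_Q$. The substantive work is therefore the infimum statement of part 1. The two enabling facts I would use throughout are that the $\mathcal P$-polar sets form a $\sigma$-ideal (countable unions of polar sets are polar because each $P\in\mathcal P$ is $\sigma$-additive) and that $Q \ll \mathcal P$ makes $j_Q$ a monotone lattice homomorphism from $(L^{0}_{c},\preccurlyeq)$ to $(L^{0}_{Q},\leq_Q)$.

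For the infimum case, I would suppose $X = \inf_{n\in\N} X_n$ in $L^0_c$ and fix representatives $f \in X$ and $f_n \in X_n$. The $\mathcal P$-polar sets $N_n := \{f > f_n\}$ have $\mathcal P$-polar union $N$, and off $N$ the pointwise infimum $\inf_n f_n$ is sandwiched between $f$ and $f_1$, hence real-valued; a polar modification $g$ of $\inf_n f_n$ therefore lies in $\mathcal L^0$ and coincides with $\inf_n f_n$ off $N$. From $g \leq f_n$ off $N$ one obtains $[g]_c \preccurlyeq X_n$ for every $n$, and the infimum property of $X$ forces $[g]_c \preccurlyeq X$; combined with $f \leq g$ off $N$ this gives $[g]_c = X$. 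Monotonicity of $j_Q$ then shows $j_Q(X) \leq_Q j_Q(X_n)$ for every $n$, so $j_Q(X)$ is a lower bound in $L^0_Q$. For the greatest-lower-bound property, any further lower bound $Z^Q = [z]_Q$ of $(j_Q(X_n))_n$ satisfies $Q(z \leq f_n) = 1$ for each $n$, and $\sigma$-additivity of $Q$ together with $Q(N) = 0$ yields $Q\bigl(\bigcap_n\{z \leq f_n\} \cap N^c\bigr) = 1$; on this intersection $z \leq \inf_n f_n = g$, so $Z^Q \leq_Q j_Q(g) = j_Q(X)$. This completes $j_Q(X) = \inf_{n\in\N} j_Q(X_n)$.

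For part 2, I would take a dominating sequence $(Y_n)_n$ witnessing $X_n \order{c} Y$, so $Y_n$ is decreasing with $\inf_n Y_n = 0$ and $|X_n - Y| \preccurlyeq Y_n$. Monotonicity and the lattice-homomorphism property of $j_Q$ make $(j_Q(Y_n))_n$ decreasing in $L^0_Q$ and yield $|j_Q(X_n) - j_Q(Y)| = j_Q(|X_n - Y|) \leq_Q j_Q(Y_n)$, while part 1 identifies $\inf_n j_Q(Y_n) = j_Q(\inf_n Y_n) = j_Q(0) = 0$ in $L^0_Q$. This is precisely the data required to conclude $j_Q(X_n) \order{Q} j_Q(Y)$. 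The main obstacle is the representative-level construction in part 1, namely producing a single representative of $X$ that coincides with the pointwise infimum of chosen representatives of $(X_n)_n$ off one polar set; once this identification is secured, everything else is routine book-keeping with $Q\ll\mathcal P$ and the $\sigma$-additivity of $Q$.
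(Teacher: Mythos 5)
Your proof is correct, and its overall architecture matches the paper's: part 2 is deduced from part 1 via the dominating sequence exactly as in the paper, the supremum case follows from the infimum case by negation, and the lower-bound half of part 1 is just monotonicity of $j_Q$ under $Q\ll\mathcal P$. Where you genuinely diverge is the greatest-lower-bound half of part 1. You first establish the representative-level identity $X=[g]_c$ with $g$ a polar modification of $\inf_{n}f_n$ off the single polar set $N=\bigcup_n\{f>f_n\}$ --- i.e., that a countable lattice infimum in $L^0_c$, when it exists, is realized by the pointwise infimum of representatives --- and then compare an arbitrary lower bound $Z^Q=[z]_Q$ with that pointwise infimum directly under $Q$. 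The paper never makes this identification: it takes a representative $g$ of $Z^Q$, forms the $Q$-full event $A=\bigcap_{n}\{f_n\geq g\}$, and pastes $Z:=[g]_c\mathbf{1}_{A}+X\mathbf{1}_{A^c}\in L^0_c$; then $Z\preccurlyeq X_n$ for all $n$, hence $Z\preccurlyeq X$ by the infimum property in $L^0_c$, and $Z^Q=j_Q(Z)\leq_Q j_Q(X)$. Both routes rest on the same two ingredients (polar sets form a $\sigma$-ideal, and $Q\ll\mathcal P$), and both are complete. Your version buys the reusable structural fact that countable infima in $L^0_c$ are pointwise; the paper's pasting trick avoids any pointwise identification and is precisely the device it redeploys in Lemma~\ref{lem:qsOasO} for arbitrary (uncountable) families, where a pointwise-infimum argument is no longer available and the support $S(Q)$ replaces the set $A$.
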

\begin{proof}
    \begin{enumerate}[itemindent=25pt, leftmargin=0pt, nosep]
        \item We only prove the case of the infimum. From $Q\ll \mathcal P$ it immediately follows that $j_Q(X)$ is a lower  bound for $(j_Q(X_n))_{n\in \N}$. Consider another lower bound $Z^Q\in L^0_Q$ of $(j_Q(X_n))_{n\in \N}$.  We have to show that $j_Q(X) \geq_Q Z^Q$. For any choice $f_n\in X_n$ and $g\in Z^Q$ we have that $Q[\{f_n\geq g\}] = 1$ and thus also the event
        \begin{equation*}
            A := \bigcap_{n\in \N} \{f_n \geq g\}
        \end{equation*}
        satisfies $Q[A]=1$. Let $Z:=[g]_c\mathbf{1}_{A} + X\mathbf{1}_{A^c}\in L^0_c$. Then $Z\preccurlyeq X_n$, $n\in \N$, and hence $Z\preccurlyeq X$ which implies $j_Q(X)\geq_Q j_Q(Z)=Z^Q$.
        
        \item By definition of order convergence, there exists a decreasing sequence $(Y_{n})_{n \in \N} \subseteq L^{0}_{c+}$ such that $\inf_{n \in \N} Y_{n} = 0$ in $L^0_c$ and for all $n \in \N$
        \begin{equation*}
            \lvert X_{n} - X \rvert \preccurlyeq Y_{n}.
        \end{equation*}
        Define $X^{Q} := j_{Q}(X)$ and $X^{Q}_{n} := j_{Q}(X_{n})$, $Y^{Q}_{n} := j_{Q}(Y_{n})$, $n\in \N$. As $Q \ll \mathcal{P}$, we have for all $n \in \N$
        \begin{equation*}
            \lvert X^{Q}_{n} - X^{Q} \rvert \leq_Q Y^{Q}_{n} \quad \mbox{and} \quad 
             0\leq_QY^{Q}_{n + 1}\leq _QY^{Q}_{n}. 
        \end{equation*}
        According to 1. $\inf_{n\in \N}Y^Q_n=0$ in $L^0_Q$. Hence, $X^Q_n\order{Q} X^Q$.
    \end{enumerate}
\end{proof}

\begin{proof}[Proof of Theorem~\ref{thm:clsd:seqO:allQ:locQ}]
    $2. \ \Leftrightarrow \ 3.\ \Leftrightarrow \ 4.$: see Lemma~\ref{lem:loc:clsd} and Proposition~\ref{prop:clsd:allQ:loc}.
    
    $4. \ \Leftrightarrow \ 5.\ \Leftrightarrow \ 6.$: follow from Lemma~\ref{lem:Qclosed:dom}.
    
    $1. \ \Rightarrow \ 6.$: Lemma~\ref{lem:OqsCl:OasCl}.
    
    $6. \ \Rightarrow \ 1.$: Assume $\mathcal C\neq \emptyset$ and let  $(X_{n})_{n \in \N} \subseteq \mathcal{C}$ such that $X_{n} \order{c} X \in L^{0}_{c}$. According to Lemma~\ref{lem:help1}, $j_Q(X_n)\order{Q} j_Q(X)$. As $j_Q(\mathcal{C})$ is closed in the $Q$-a.s.\ order for any $Q\in \mathcal{Q}$ we obtain $j_{Q}(X) \in j_Q(\mathcal{C})$ for all $Q \in \mathcal{Q}$. Since $\mathcal{Q}$ is a reduction set for $\mathcal{C}$ we infer $X \in \mathcal{C}$.     
\end{proof}

Interestingly, also in the robust case there are situations in which we may add order closedness to the list in Theorem~\ref{thm:clsd:seqO:allQ:locQ}. This is closely related to the existence of supports of probability measures as introduced in Section~\ref{sec:support}.

\begin{lem} \label{lem:qsOasO}
    Let $Q\in \mathfrak{P}_c(\Omega)$.
    \begin{enumerate}[itemindent=25pt, leftmargin=0pt, nosep]
        \item Suppose that $Q$ is supported. Let $\mathcal C\subseteq L^0_c$ and suppose that the infimum (supremum) $X:=\inf \mathcal C$ ($X:=\sup \mathcal C$) exists in $L^0_c$. Then $j_Q(X)= \inf j_Q(\mathcal{C})$ ($j_Q(X)= \sup j_Q(\mathcal{C})$) in $L^0_Q$. In particular, for any net $(X_\alpha)_{\alpha\in I}\subseteq L^0_c$ and $X\in L^0_c$ we have that $X_\alpha\order{c} X$ implies $j_Q(X_\alpha)\order{Q} j_Q(X)$. 
        
        \item Conversely, suppose that for any net $(X_\alpha)_{\alpha\in I}\subseteq L^0_c$ and $X\in L^0_c$ we have that $X_\alpha\order{c} X$ implies $j_Q(X_\alpha)\order{Q} j_Q(X)$, then $Q$ is supported.
    \end{enumerate}
\end{lem}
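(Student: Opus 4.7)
My plan is: Part~1 by constructive lifting of lower (resp.\ upper) bounds using the support $S(Q)$, and Part~2 by contraposition, invoking the characterization recalled in Section~\ref{sec:support} that $X \mapsto \int X\, dQ$ on $L^\infty_c$ is order continuous if and only if $Q \in sca_c$.

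For Part~1 I prove only the infimum statement, the supremum being dual. Let $X := \inf \mathcal{C}$. Since $Q \ll \mathcal{P}$ we immediately have that $j_Q(X)$ is a lower bound of $j_Q(\mathcal{C})$ in $L^0_Q$; the nontrivial point is that it is the \emph{greatest} lower bound. Given any lower bound $Z^Q \in L^0_Q$, pick representatives $g \in Z^Q$ and $f_X \in X$, and glue them along the support: set $g^{\ast} := g\,\chi_{S(Q)} + f_X\,\chi_{S(Q)^c}$ and $Z := [g^{\ast}]_c \in L^0_c$. Since $Q(S(Q)^c) = 0$ we have $j_Q(Z) = Z^Q$. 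To show $Z \preccurlyeq Y$ for every $Y \in \mathcal{C}$ with representative $f_Y$, decompose
\begin{equation*}
\{g^{\ast} > f_Y\} \;=\; \bigl(\{g > f_Y\} \cap S(Q)\bigr) \,\cup\, \bigl(\{f_X > f_Y\} \cap S(Q)^c\bigr).
\end{equation*}
On the first piece, $Z^Q \leq_Q j_Q(Y)$ gives $Q(\{g > f_Y\} \cap S(Q)) = 0$, and property (b) of Definition~\ref{defi:supp} upgrades this to $\mathcal{P}$-polarity---this is precisely where supportedness of $Q$ is essential and why an uncountable $\mathcal{C}$ causes no harm. The second piece lies in $\{f_X > f_Y\}$, which is $\mathcal{P}$-polar since $X \preccurlyeq Y$. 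Hence $Z \preccurlyeq Y$ for every $Y \in \mathcal{C}$, whence $Z \preccurlyeq X$, and applying $j_Q$ yields $Z^Q = j_Q(Z) \leq_Q j_Q(X)$. The net statement follows at once: if $X_\alpha \order{c} X$ with dominating decreasing net $(Y_\alpha)$ satisfying $\inf_\alpha Y_\alpha = 0$ in $L^0_c$, then $(j_Q(Y_\alpha))$ is decreasing in $L^0_Q$, $|j_Q(X_\alpha) - j_Q(X)| \leq_Q j_Q(Y_\alpha)$, and the just-proven infimum preservation gives $\inf_\alpha j_Q(Y_\alpha) = j_Q(0) = 0$, so $j_Q(X_\alpha) \order{Q} j_Q(X)$.

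For Part~2, suppose $Q$ is not supported. The characterization of $sca_c$ via order continuity produces a decreasing net $(X_\alpha)_{\alpha \in I} \subset L^\infty_c$ with $\inf_\alpha X_\alpha = 0$ in $L^\infty_c$ but $E_Q[X_\alpha] \not\to 0$; trivially $X_\alpha \order{c} 0$ in $L^0_c$ (take $Y_\alpha := X_\alpha$). It suffices to show $j_Q(X_\alpha) \not\order{Q} 0$, which I do by contradiction: assuming $j_Q(X_\alpha) \order{Q} 0$, fix $\alpha_0$ so that the tail $(j_Q(X_\alpha))_{\alpha \geq \alpha_0}$ is uniformly bounded by $j_Q(X_{\alpha_0}) \in L^\infty_Q$, and is itself a decreasing net with infimum zero in $L^0_Q$. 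Using super Dedekind completeness of $L^0_Q$, I extract a countable subset realizing this infimum and, by directedness of $I$, refine it to a decreasing sequence $j_Q(X_{\alpha_n})$ with the same infimum zero; classical bounded convergence then yields $E_Q[X_{\alpha_n}] \downarrow 0$. Since $(E_Q[X_\alpha])_{\alpha}$ is a decreasing net of reals, its limit equals its infimum, which is bounded above by $\lim_n E_Q[X_{\alpha_n}] = 0$, contradicting $E_Q[X_\alpha] \not\to 0$. The main technical step---and the one I anticipate as the principal obstacle---is precisely this bounded-convergence-for-nets argument in $L^0_Q$: without super Dedekind completeness of $L^0_Q$ one cannot reduce to a sequence, and classical bounded convergence is stated only sequentially.
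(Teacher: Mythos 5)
Your proof is correct and follows essentially the same route as the paper's: Part 1 is the identical gluing-along-$S(Q)$ argument (define the candidate lower bound as $[g]_c\mathbf{1}_{S(Q)}+X\mathbf{1}_{S(Q)^c}$ and use property (b) of the support to upgrade $Q$-nullity to $\mathcal{P}$-polarity), and Part 2 rests on the same identification of the order continuous dual of $L^\infty_c$ with $sca_c$. The only cosmetic difference is that you argue Part 2 contrapositively and re-derive the order continuity of $E_Q$ on $L^\infty_Q$ by hand via super Dedekind completeness, whereas the paper invokes this fact directly and argues in the forward direction.
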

\begin{proof}
    \begin{enumerate}[itemindent=25pt, leftmargin=0pt, nosep]
        \item We only prove the case of the infimum. From $Q \ll \mathcal P$ it immediately follows that $j_Q(X)$ is a lower  bound for $j_Q(\mathcal{C})$. Hence, we only have to show that any lower bound $Y^Q\in L^0_Q$ of $j_Q(\mathcal C)$ in $L^0_Q$ satisfies $j_Q(X)\geq_Q Y^Q$. Denote by $S(Q)$ a version of the $Q$-support. Similar to the proof of Lemma~\ref{lem:help1} we pick $f\in Y^Q$ and define $Y:=[f]_c\mathbf{1}_{S(Q)} + X1_{S(Q)^c}$. We have that $Y \preccurlyeq Z$  for all $Z\in \mathcal C$. Indeed, let $Z\in \mathcal C$ and $g\in Z$ (and thus also $g\in j_Q(Z)$). Since $0 = Q[f > g] = Q[S(Q) \cap \{f > g\}]$ we infer that $c[S(Q) \cap \{f > g\}] = 0$ (recall Definition~\ref{defi:supp}). Therefore $[f]_c \mathbf{1}_{S(Q)} \preccurlyeq  Z\mathbf{1}_{S(Q)}$. $X$ being a lower bound of $\mathcal C$ now yields $Y \preccurlyeq Z$. As $Z\in \mathcal C$ was arbitrary, and as $X$ is the largest lower bound of $\mathcal C$, we conclude that $Y\preccurlyeq X$. This in turn implies that $j_Q(X)\geq_Q j_Q(Y)=Y^Q$ where we have used that $Q[S(Q)] = 1$ for the latter equality. The remaining part of the assertion now follows along similar lines as presented in the proof of Lemma~\ref{lem:help1}.

        \item Note that by the dominated convergence theorem, for any measure $P\in  \mathfrak{P}(\Omega)$, the linear functional
        \begin{equation*}
            l_P \colon L^\infty_P \ni X \mapsto E_P[X]
        \end{equation*}
        is always $\sigma$-order continuous and thus also order continuous, because $L^\infty_P $ is super Dedekind complete. Under the assumption stated in 2.\ we thus have that
        \begin{equation*}
            L^\infty_c \ni X \mapsto E_Q[X],
        \end{equation*}
        which we may view as the composition $l_Q\circ j_Q$, is order continuous. Since the order continuous dual of $L^\infty_c$ may be identified with $sca_c$, see \cite[Proposition B.3]{LMS2022}, we find that $Q$ must be supported.
    \end{enumerate} 
\end{proof}

Combining Theorem~\ref{thm:clsd:seqO:allQ:locQ} with Lemma~\ref{lem:qsOasO} we obtain:

\begin{thm} \label{thm:clsd:ord:seqO:allQ:locQ}
    Suppose that $\mathcal{C} \subseteq L^{0}_{c}$ is solid and $\mathcal{P}$-sensitive and let $\mathcal{Q} \subseteq \mathfrak{P}_c(\Omega) \cap sca_{c}$ be a reduction set for $\mathcal C$. Then the following are equivalent:
    \begin{enumerate}[itemindent=25pt, leftmargin=0pt, nosep]
        \item $\mathcal{C}$ is order closed.
        
        \item $\mathcal{C}$ is sequentially order closed.
        
        \item $\mathcal{C}$ is $\mathcal{Q}$-closed.
        
        \item $\mathcal{C}$ is locally $Q$-closed for each $Q \in \mathcal{Q}$.
        
        \item $j_Q(\mathcal{C})$ is $Q$-closed in $L^0_Q$ for each $Q \in \mathcal{Q}$.
        \item $j_Q(\mathcal{C})$ is order closed with respect to the $Q$-a.s.\ order on $L^0_Q$ for each $Q \in \mathcal{Q}$.
        \item $j_Q(\mathcal{C})$ is sequentially order closed with respect to the $Q$-a.s.\ order on $L^0_Q$ for each $Q \in \mathcal{Q}$.
    \end{enumerate}
\end{thm}
\begin{proof}
    In the view of Theorem~\ref{thm:clsd:seqO:allQ:locQ} and as obviously $1. \ \Rightarrow \ 2.$, it suffices to prove that  $6. \ \Rightarrow \ 1.$ But if $\mathcal C\neq \emptyset$ and $(X_\alpha)_{\alpha\in I}\subseteq \mathcal C$ and $X\in L^0_c$ satisfy $X_\alpha\order{c} X$, then $j_Q(X_\alpha)\order{Q} j_Q(X)$ for all $Q\in \mathcal{Q}$ according to Lemma~\ref{lem:qsOasO}. Thus, 6.\ implies that $j_Q(X)\in j_Q(\mathcal{C})$ for all $Q\in \mathcal Q$, and $\mathcal Q$ being a reduction set for $\mathcal C$ now yields $X\in \mathcal C$.
\end{proof}

Note that in Theorem~\ref{thm:clsd:ord:seqO:allQ:locQ} it is important that we consider a reduction set $\mathcal Q$ for $\mathcal C$ which is strictly smaller than $\mathfrak{P}_c(\Omega)$ if $ca_c\neq sca_c$. In fact, $ca_c\neq sca_c$ is often the case according to \cite[Section 3.3]{LMS2022}, see also Example~\ref{exmp:classS}. In the sequel we will encounter more situations in which the existence of a suitable reduction set with further properties than $\mathfrak{P}_c(\Omega)$ is crucial.

The following example shows that the equivalence 1.\ $\Leftrightarrow$ 2.\ in Theorem~\ref{thm:clsd:ord:seqO:allQ:locQ} generally does not hold if the reduction set is not supported:

\begin{exmp} \label{exmp:seqOrdNotOrdCont}
    Recall that $ca_{c}$ and $sca_{c}$ can be identified with the $\sigma$-order and the order continuous dual of $L^{\infty}_{c}$, respectively, see, for instance, \cite{LMS2022}. Let us assume that $sca_{c} \neq ca_{c}$ (see Example~\ref{exmp:classS}) and let $\mu \in ca_{c+} \setminus sca_{c+}$. Consider 
    \begin{equation*}
        \mathcal{C}_r := \bigg\{X \in L^{\infty}_{c+} \colon \int X d\mu \leq r\bigg\}
    \end{equation*}
    where $r>0$. $\mathcal{C}_r$ is obviously convex and solid. Moreover,  $\mathcal{C}_r$ is $\mathcal{P}$-sensitive with reduction set $\mathcal Q=\{Q\}$ where $Q := \mu[\Omega]^{-1} \mu \in \mathfrak{P}_{c}(\Omega)$ is not supported. Since $L^\infty_c\ni X\mapsto \int X d\mu $ is not order continuous, there exists a decreasing net $(X_\alpha)_{\alpha\in I}\subseteq L^\infty_{c+}$ with $\inf_{\alpha\in I} X_\alpha =0 $ such that  $\inf_{\alpha\in I} \int X_\alpha d\mu =: b>0 $. Let $\beta \in I$. Then the net $Y_\alpha:=X_\beta-X_\alpha$, $\alpha\geq \beta$, is increasing and satisfies $0\preccurlyeq Y_\alpha$ and  $Y_\alpha \order{c} X_\beta$. We have $(Y_\alpha)_{\alpha\geq \beta}\subseteq \mathcal{C}_r $  for $r=\int X_\beta d\mu-b$, but $X_\beta\not\in \mathcal{C}_r$. Hence, $\mathcal{C}_r$ is sequentially order closed but not order closed.
\end{exmp}

\section{$\mathcal P$-Sensitivity Reloaded} \label{sec:pSensi:rel}

In this section we study necessary and sufficient conditions for $\mathcal{P}$-sensitivity of $\mathcal C\subseteq L^0_c$.  We start with some rather evident structural properties. 

\subsection{$\mathcal P$-Sensitivity by Local Defining Conditions}

\begin{prop}\label{prop:sensitivity:local}
    Let $\emptyset\neq \mathcal{Q} \subseteq \mathfrak{P}_c(\Omega)$ and fix a quantifier $\dagger\in \{\exists, \forall\}$. Suppose that 
    \begin{equation*}
        \mathcal{C} = \bigcap_{Q \in \mathcal{Q}} \{X \in L^{0}_{c} \colon \dagger H \in \mathcal{H} \ Q[A^{H}_{Q}(X)] = 1\},
    \end{equation*}
    where $\mathcal{H}$ is a non-empty set and for all $H\in \mathcal{H}$ and $Q\in \mathcal Q$ the function $A^{H}_{Q} \colon L^{0}_{c} \to \mathcal{F}$ satisfies
    \begin{equation*}
        Q[A^{H}_{Q}(X) \triangle A^{H}_{Q}(Y)] = 0,
    \end{equation*}
    whenever $Q[X = Y] = 1$. Then, $\mathcal{C}$ is $\mathcal{P}$-sensitive with reduction set $\mathcal{Q}$.
\end{prop}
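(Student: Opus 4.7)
The plan is to invoke Lemma~\ref{lem:pSensi:redSet}(2), which reduces the claim to verifying that $\mathcal{C}$ satisfies \eqref{eq:reduction} for the candidate reduction set $\mathcal{Q}$. The inclusion $\mathcal{C}\subset \bigcap_{Q\in\mathcal{Q}} j_Q^{-1}\circ j_Q(\mathcal{C})$ is automatic, so only the reverse inclusion needs attention. I would fix some $X\in L^{0}_{c}$ with $j_Q(X)\in j_Q(\mathcal{C})$ for every $Q\in\mathcal{Q}$, and for each such $Q$ select a witness $Y_Q\in\mathcal{C}$ with $Q(X=Y_Q)=1$; the goal is then to confirm that $X$ itself belongs to the $Q$-slice $\{Z\in L^0_c\mid \dagger\,H\in\mathcal{H}\colon Q(A^{H}_Q(Z))=1\}$.

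Since $Y_Q\in\mathcal{C}$, by the assumed representation $Y_Q$ meets the defining condition at $Q$, and the hypothesis $Q(A^{H}_Q(X)\triangle A^{H}_Q(Y_Q))=0$ whenever $Q(X=Y_Q)=1$ is precisely the device needed to transport this property from $Y_Q$ to $X$. The argument then splits on the quantifier $\dagger$: if $\dagger=\exists$, the witness supplies some $H\in\mathcal{H}$ with $Q(A^{H}_Q(Y_Q))=1$, and the invariance yields $Q(A^{H}_Q(X))=1$ for the same $H$; if $\dagger=\forall$, the same invariance applied uniformly in $H$ gives $Q(A^{H}_Q(X))=Q(A^{H}_Q(Y_Q))=1$ for every $H\in\mathcal{H}$. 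In either case, $X$ lies in the $Q$-slice.

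Letting $Q$ range over $\mathcal{Q}$ then places $X$ in $\mathcal{C}$, so $\mathcal{Q}$ is a reduction set and $\mathcal{C}$ is $\mathcal{P}$-sensitive by Lemma~\ref{lem:pSensi:redSet}(2). I do not anticipate any genuine obstacle: the argument is essentially bookkeeping once one notices that $\mathcal{P}$-sensitivity amounts to a condition checked separately on each $Q$-equivalence class and that the invariance hypothesis on $A^{H}_Q$ encodes exactly the compatibility required to carry the defining predicate across a $Q$-a.s.\ identification.
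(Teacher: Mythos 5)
Your argument is correct and coincides with the paper's own proof: fix $X$ with $j_Q(X)\in j_Q(\mathcal{C})$ for all $Q\in\mathcal{Q}$, pick a witness $Y_Q\in\mathcal{C}$ with $Q(X=Y_Q)=1$, and use the invariance hypothesis on $A^H_Q$ to transfer the defining condition (for either quantifier) from $Y_Q$ to $X$. The paper merely compresses the case split on $\dagger$ into a single line, so there is nothing to add.
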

\begin{proof}
    Assume $\mathcal C\neq \emptyset$ and let $X \in L^{0}_{c}$ such that $j_{Q}(X) \in j_Q(\mathcal{C})$ for all $Q \in \mathcal{Q}$. Fix $Q \in \mathcal{Q}$. Then there exists $Y \in \mathcal{C}$ such that $j_{Q}(X) = j_{Q}(Y)$, that is $Q[X = Y] = 1$. Hence, depending on the quantifier,
    \begin{equation*}
      \dagger H \in \mathcal{H} \quad Q[A^{H}_{Q}(X)] = Q[A^{H}_{Q}(Y)] = 1.
    \end{equation*}
    As $Q \in \mathcal{Q}$ was arbitrary, $X \in \mathcal{C}$.
\end{proof}

\begin{exmp}\label{ex:sensitive:sets}
    Let $\mathcal{Q} \subseteq \mathfrak{P}_c(\Omega)$ be non-empty. To conclude $\mathcal{P}$-sensitivity of the following examples of sets $\mathcal{C}$ we apply Proposition~\ref{prop:sensitivity:local} in conjunction with the facts that $L^0_{c+}$ is $\mathcal{P}$-sensitive (see Example~\ref{exmp:pSensi}) and that the intersection of $\mathcal{P}$-sensitive sets remains $\mathcal{P}$-sensitive (see Lemma~\ref{lem:intersect:Psensi}).
    \begin{enumerate}[itemindent=25pt, leftmargin=0pt, nosep]
        \item (local boundedness condition)  Set $\mathcal{H} := \N$ and $A^{n}_{Q}(X) := \{\omega \in \Omega \colon f(\omega) \leq n\}$ for some $f\in X$ and $n \in \N$. Then
        \begin{equation*}
            \mathcal{C} := \{X \in L^{0}_{c+} \colon  \forall Q \in \mathcal Q \, \exists n \in \N \ Q[X \leq n] = 1\}
        \end{equation*}
        is $\mathcal{P}$-sensitive and even convex and solid. However, $\mathcal{C}$ is not sequentially order closed, as we can easily see that $\mathcal C$ is not $\mathcal{Q}$-closed.
        
        \item (uniform local boundedness condition) Let $Y_{Q} \in L^{0}_{c+}$ for each $Q \in \mathcal{Q}$. Set $\mathcal{H} := \{0\}$ and $A^{0}_{Q}(X) := \{\omega \in \Omega \colon f(\omega) \leq g(\omega)\}$ for some $f \in X$ and $g \in Y_{Q}$. Then
        \begin{align*}
            \mathcal{C} :=  \{X \in L^{0}_{c+} \colon \forall Q \in \mathcal{Q} \ Q[X \leq Y_{Q}] = 1\}
        \end{align*}
        is $\mathcal{P}$-sensitive, convex, and solid. Clearly, $\mathcal{C}$ is also $\mathcal{Q}$-closed and hence sequentially order closed.

        \item (uniform martingale condition) Let $\mathcal H:=\{(Y,\mathcal{G})\}$ for some sub-$\sigma$-algebra $\mathcal{G}$ of $\mathcal{F}$ and some $Y\in L^0_{c+}$ which admits a $\mathcal{G}$-measurable representative $g\in Y$.  The set
        \begin{equation*}
            \mathcal{C} := \{X \in L^{0}_{c+} \colon \forall Q \in \mathcal Q \; \forall f \in E_Q[X\mid \mathcal G] \ f = g \ Q\text{-a.s.}\}
        \end{equation*}
        is $\mathcal P$-sensitive. Here, $E_Q[X\mid \mathcal G]\in L^0_c(\Omega, \mathcal G, Q)$ denotes the equivalence class of conditional expectations under $Q$ of (any representative of) $X$ given $\mathcal G$. Indeed, let
        \begin{equation*}
            A^{(Y,\mathcal{G})}_{Q}(X) := \{\omega \in \Omega \colon f(\omega) = g(\omega)\}
        \end{equation*}
        for some arbitrary choice $f\in E_Q[X\mid \mathcal G]$. Then
        \begin{equation*}
            \mathcal{C} := \{X \in L^{0}_{c+} \colon \forall Q \in \mathcal Q \ Q[A^{(Y,\mathcal{G})}_{Q}(X)] = 1\}
        \end{equation*}

        \item (uniform supermartingale condition) Again let $\mathcal H:=\{(Y,\mathcal{G})\}$ for some sub-$\sigma$-algebra $\mathcal{G}$ of $\mathcal{F}$ and some $Y\in L^0_{c+}$ which admits a $\mathcal{G}$-measurable representative $g\in Y$.  The set
        \begin{equation*}
            \mathcal{C} := \{X \in L^{0}_{c+} \colon \forall Q \in \mathcal Q \; \forall f \in E_Q[X \mid \mathcal G] \ f \leq g \; Q\text{-a.s.}\}
        \end{equation*}
        is $\mathcal P$-sensitive ($A^{(Y,\mathcal{G})}_{Q}(X) := \{\omega \in \Omega \colon f(\omega) \leq g(\omega)\}$ for some arbitrary choice $f \in E_Q[X \mid \mathcal G]$). Moreover, $\mathcal{C}$ is solid, convex, $\mathcal{Q}$-closed. Hence, by Theorems~\ref{thm:clsd:seqO:allQ:locQ} and \ref{thm:clsd:ord:seqO:allQ:locQ}, $\mathcal{C}$ is sequentially order closed and even order closed if $\mathcal{Q}\subseteq sca_c$.

        \item Let $Y\in L^0_{c+}$. Then the set
        \begin{equation*}
            \mathcal C := \{X \in L^0_{c+} \colon X \preccurlyeq Y\} = \{X \in L^0_{c+} \colon \forall P \in \mathcal P \  P[X \leq Y] = 1\}
        \end{equation*}
        is convex, solid, and sequentially order closed. $\mathcal C$ is also $\mathcal P$-sensitive according Proposition~\ref{prop:sensitivity:local}.  Indeed, set $\mathcal H:=\{Y\}$ and $A^{Y}_P(X):=\{\omega \in \Omega \colon f(\omega)\leq g(\omega)\}$, $P \in \mathcal P =\mathcal Q$, $X \in L^0_c$, where $f \in X$ and $g \in Y$.
    \end{enumerate}
\end{exmp}

The following lemma is easily verified. We will apply it in our discussion of robust acceptability criteria in Section~\ref{sec:appli:acc}.

\begin{lem}\label{lem:Psensi:acc}
    Let $\emptyset\neq \mathcal{Q} \subseteq \mathfrak{P}_c(\Omega)$, and for each $Q\in \mathcal{Q}$ fix a set $\mathcal{C}_Q\in L^0_{Q}$. Then $$\mathcal{C}=\bigcap_{Q\in \mathcal{Q}}j_Q^{-1}(\mathcal{C}_Q)$$ is $\mathcal{P}$-sensitive with reduction set $\mathcal{Q}$.
\end{lem}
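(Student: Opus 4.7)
\medskip

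\textbf{Proof plan.} The claim is essentially tautological once the definitions are unwound, so the plan is very short. I would show that $\mathcal{Q}$ satisfies the defining condition of a reduction set, i.e.,
\[
    \mathcal{C} = \bigcap_{Q \in \mathcal{Q}} j_{Q}^{-1} \circ j_{Q}(\mathcal{C}),
\]
from which $\mathcal{P}$-sensitivity follows by Lemma~\ref{lem:pSensi:redSet}(2).

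The inclusion $\mathcal{C} \subset \bigcap_{Q \in \mathcal{Q}} j_{Q}^{-1} \circ j_{Q}(\mathcal{C})$ is immediate (and already noted after Definition~\ref{def:sensitivity}). For the converse, I would fix $X \in \bigcap_{Q \in \mathcal{Q}} j_{Q}^{-1} \circ j_{Q}(\mathcal{C})$ and, for each $Q \in \mathcal{Q}$, choose $Y_{Q} \in \mathcal{C}$ with $j_{Q}(X) = j_{Q}(Y_{Q})$. Since $Y_{Q} \in \mathcal{C} \subset j_{Q}^{-1}(\mathcal{C}_{Q})$, we have $j_{Q}(X) = j_{Q}(Y_{Q}) \in \mathcal{C}_{Q}$, hence $X \in j_{Q}^{-1}(\mathcal{C}_{Q})$. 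Since $Q \in \mathcal{Q}$ was arbitrary, $X \in \bigcap_{Q \in \mathcal{Q}} j_{Q}^{-1}(\mathcal{C}_{Q}) = \mathcal{C}$.

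There is no genuine obstacle in this argument; the only point worth emphasizing is the conceptual one, namely that whenever $\mathcal{C}$ is defined by conditions of the form ``$j_{Q}(X) \in \mathcal{C}_{Q}$ for all $Q \in \mathcal{Q}$'' (i.e., by separate $Q$-local membership tests ranging over $\mathcal{Q}$), its $\mathcal{P}$-sensitivity with reduction set $\mathcal{Q}$ comes for free. This is precisely the structural reason the preceding examples (and, later, the acceptability sets in Section~\ref{sec:appli:acc}) are $\mathcal{P}$-sensitive.
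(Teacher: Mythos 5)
Your proof is correct and follows essentially the same route as the paper: fix $X$ with $j_Q(X)\in j_Q(\mathcal{C})$ for all $Q\in\mathcal{Q}$, pick $Y_Q\in\mathcal{C}$ with $j_Q(Y_Q)=j_Q(X)$, observe that $j_Q(Y_Q)\in\mathcal{C}_Q$ and hence $X\in j_Q^{-1}(\mathcal{C}_Q)$ for every $Q$, so $X\in\mathcal{C}$. The only cosmetic difference is that you route the final step through Lemma~\ref{lem:pSensi:redSet}(2) while the paper states the conclusion directly; the substance is identical.
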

\begin{proof}
    Let $X \in L^0_c$ satisfy $j_Q(X)\in j_Q(\mathcal{C})$ for all $Q \in \mathcal{Q}$. Then, for each $Q \in \mathcal{Q}$, there is $Y^Q \in \mathcal{C}$ such that $Q[Y^Q = X] = 1$. As $Y^Q \in \mathcal{C}$ we have that $j_Q(Y^Q) \in \mathcal{C}_Q$. $Q[Y^Q = X] = 1$ implies that $j_Q(Y^Q) = j_Q(X)$, and therefore $j_Q(X) \in \mathcal{C}_Q$. As $Q \in \mathcal{Q}$ was arbitrary, $\mathcal{P}$-sensitivity of $\mathcal{C}$ with reduction set $\mathcal{Q}$ follows. 
\end{proof}

The next result is a slight modification of the observation already made in Corollary~\ref{cor:l0:bipolar}

\begin{lem}\label{lem:Psensi:dual}
    Let $\mathcal{M}\subseteq ca_{c}\setminus \{0\}$ be non-empty. Further let $$\mathcal{C}:=\bigg\{X\in L^0_c \colon \forall \mu\in \mathcal{M} \ \text{$X$ is $\mu$-integrable and $\int Xd\mu \leq a_\mu$}\bigg\},$$ where $a_\mu\in \R$, $\mu\in  \mathcal{M}$. Then $\mathcal{C}$ is $\mathcal{P}$-sensitive with reduction set $\mathcal{Q}:=\{|\mu|/|\mu|(\Omega)\mid \mu \in \mathcal{M}\}$. 
\end{lem}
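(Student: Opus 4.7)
The plan is to apply Lemma~\ref{lem:pSensi:redSet}(2): it suffices to verify that the set $\mathcal{C}$ satisfies the reduction identity
\[
\mathcal{C}=\bigcap_{Q\in\mathcal{Q}} j_Q^{-1}\circ j_Q(\mathcal{C})
\]
with the proposed $\mathcal{Q}=\{|\mu|/|\mu|(\Omega)\mid \mu\in\mathcal M\}$. First I would check that $\mathcal{Q}\subset\mathfrak{P}_c(\Omega)$ is well-defined: since each $\mu\in\mathcal{M}\subset ca_c$ satisfies $\mu\ll\mathcal P$, so does $|\mu|$, and hence $Q_\mu:=|\mu|/|\mu|(\Omega)\in\mathfrak{P}_c(\Omega)$. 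The inclusion ``$\subset$'' in the reduction identity is trivial.

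For the nontrivial inclusion, I would fix $X\in L^0_c$ such that $j_{Q_\mu}(X)\in j_{Q_\mu}(\mathcal C)$ for every $\mu\in\mathcal{M}$, and show $X\in\mathcal C$. To this end, fix an arbitrary $\mu\in\mathcal M$, pick $Y\in\mathcal C$ with $j_{Q_\mu}(Y)=j_{Q_\mu}(X)$, i.e.\ $Q_\mu(X=Y)=1$. The key observation is that the defining condition ``$X$ is $\mu$-integrable and $\int X\,d\mu\leq a_\mu$'' depends only on the $Q_\mu$-equivalence class: from $Q_\mu(X=Y)=1$ we get $|\mu|(X\neq Y)=0$, and consequently $|X|=|Y|$ $|\mu|$-a.s. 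Since $Y\in\mathcal C$ ensures $\int |Y|\,d|\mu|<\infty$, it follows that $\int |X|\,d|\mu|<\infty$, so $X$ is $\mu$-integrable, and by $\mu\ll|\mu|$ we conclude $\int X\,d\mu=\int Y\,d\mu\leq a_\mu$.

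As $\mu\in\mathcal M$ was arbitrary, $X\in\mathcal C$, which closes the argument. There is essentially no real obstacle in this proof; the only subtle point worth emphasizing is that the normalizing choice $Q_\mu=|\mu|/|\mu|(\Omega)$ (rather than, say, a positive part of $\mu$) is what guarantees that the equivalence $|\mu|\approx Q_\mu$ makes $Q_\mu(X=Y)=1$ propagate to $|\mu|$-almost sure equality, which is in turn what makes the $\mu$-integral well-defined and invariant under the equivalence. The result is thus essentially a direct instance of the reasoning already used in Proposition~\ref{prop:pSensi}(1) and Corollary~\ref{cor:l0:bipolar}, tailored to signed measures via their total variation.
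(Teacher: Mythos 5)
Your proof is correct and follows essentially the same route as the paper's: fix $X$ with $j_{Q_\mu}(X)\in j_{Q_\mu}(\mathcal C)$, pick $Y\in\mathcal C$ with $Q_\mu(X=Y)=1$, and use that the $\mu$-integrability and the value of $\int X\,d\mu$ depend only on the $|\mu|$-a.s.\ class of $X$. The only difference is that you spell out the transfer of integrability via $|X|=|Y|$ $|\mu|$-a.s., which the paper leaves implicit.
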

\begin{proof}
    Let $X\in L^0_c$ such that $j_Q(X) \in j_{Q}(\mathcal{C})$ for all $Q \in \mathcal Q$, and let $\mu \in \mathcal{M}$. For $Q := \frac{\lvert \mu \rvert}{\lvert \mu \rvert(\Omega)} \in \mathcal{Q}$ pick $Y \in \mathcal{C}$ such that $j_{Q}(X) = j_{Q}(Y)$. As $Q[X = Y] = 1$, $X$ is $\mu$-integrable, and
    \begin{equation*}
        \int X d\mu  = \int Y d\mu \leq a_\mu.
    \end{equation*}
    Since $\mu \in \mathcal{M}$ was arbitrary, we infer that $X \in \mathcal{C}$.
\end{proof}

\subsection{$\mathcal P$-Sensitivity and Aggregation}
In the following we relate $\mathcal{P}$-sensitivity to the concept of aggregation (cf. \cite[Section 1.5]{HP1974, T1991}).
\begin{defi}
    Let $\mathcal{Q} \subseteq \mathfrak{P}_{c}(\Omega)$.
    \begin{enumerate}[itemindent=25pt, leftmargin=0pt, nosep]
        \item A family $(X^{Q})_{Q \in \mathcal{Q}} \subseteq L^{0}_{c}$ is $\mathcal{Q}$-coherent if there is $X^{\mathcal{Q}} \in L^{0}_{c}$ such that
        \begin{equation*}
            \forall Q \in \mathcal{Q} \quad Q[X^{Q} = X^{\mathcal{Q}}] = 1.
        \end{equation*}
        The equivalence class $X^{\mathcal{Q}}$ is called a $\mathcal{Q}$-aggregator of $(X^{Q})_{Q \in \mathcal{Q}}$.
        
        \item  A set $\mathcal C\subseteq L^0_c$ is called $\mathcal Q$-stable if for any $\mathcal{Q}$-coherent family $(X^{Q})_{Q \in \mathcal{Q}} \subseteq \mathcal C$ the set $\mathcal C$ contains all $\mathcal Q$-aggregators of $(X^{Q})_{Q \in \mathcal{Q}}$. 
    \end{enumerate}
\end{defi}

\begin{exmp}\label{exmp:stability}
Recall Example~\ref{exmp:classS}. For $Q=\delta_\omega$ let $X^Q={\bf 1}_{\{\omega\}}$, $\omega\in \Omega$. The family $({\bf 1}_{\{\omega\}})_{\omega\in \Omega}$ is $\mathcal{Q}$-coherent with $\mathcal{Q}$-aggregator $1$. The set $\mathcal{C} = \{{\bf 1}_{\{\omega\}} \colon \omega \in \Omega\}$ is not $\mathcal{Q}$-stable since $1 \not \in \mathcal{C}$. However, the set $\mathcal{D} = \{X\in L^0_c \colon X \preccurlyeq 1\}$ is $\mathcal{Q}$-stable. Indeed, consider any $\mathcal{Q}$-coherent family $(X^{Q})_{Q \in \mathcal{Q}} \subseteq \mathcal{D}$ and let $X^{\mathcal{Q}} \in L^{0}_{c}$ be a $\mathcal{Q}$-aggregator of $(X^{Q})_{Q \in \mathcal{Q}}$ (for instance $({\bf 1}_{\{\omega\}})_{\omega\in \Omega}$ and $1$). Then, for each $Q \in \mathcal{Q}$, $Q[X^{Q} = X^{\mathcal{Q}}] = 1$. It follows that $Q[X^{\mathcal{Q}} \leq 1] = Q[X^{Q} \leq 1] = 1$ for every $Q \in \mathcal{Q}$, and thus $X^{\mathcal{Q}}\preccurlyeq 1$ (recall $\mathcal{Q}\approx \mathcal{P}$). Therefore, $X^{\mathcal{Q}} \in \mathcal{D}$, and $\mathcal{D}$ is $\mathcal{Q}$-stable.
\end{exmp}

\begin{exmp}\label{exmp:stability:2} 
Similarly to the $\mathcal{Q}$-stability of $\mathcal{D}$ in Example~\ref{exmp:stability}, one verifies that the sets $\mathcal{C}_-^a$ and $\mathcal{C}_+^a$ introduced in Example~\ref{exmp:pSensi} are  $\mathcal{P}$-stable. 
\end{exmp}

 The $\mathcal{P}$-stable sets $\mathcal{C}_-^a$ and $\mathcal{C}_+^a$ from Example~\ref{exmp:stability:2} are also $\mathcal{P}$-sensitive according to Example~\ref{exmp:pSensi}. This is no surprise as the following proposition shows.

\begin{prop} \label{prop:pSensi:aggre}
    Let $\mathcal{Q} \subseteq \mathfrak{P}_{c}(\Omega)$. Then, a non-empty set $\mathcal{C} \subseteq L^{0}_{c}$ is $\mathcal{P}$-sensitive with reduction set $\mathcal{Q}$ if and only if $\mathcal C$ is $\mathcal{Q}$-stable. 
\end{prop}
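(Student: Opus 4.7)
The proof is essentially an unfolding of the two defining conditions, so I expect no substantial obstacle; the task is just to translate between the ``intersection'' formulation of sensitivity and the ``coherent family / aggregator'' formulation of stability.

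For the forward direction, I would assume $\mathcal{C}$ is $\mathcal{P}$-sensitive with reduction set $\mathcal{Q}$ and start from an arbitrary $\mathcal{Q}$-coherent family $(X^{Q})_{Q\in\mathcal{Q}}\subset \mathcal{C}$ with some $\mathcal{Q}$-aggregator $X^{\mathcal{Q}}\in L^0_c$. By definition of aggregator, $Q(X^Q=X^{\mathcal{Q}})=1$ for every $Q\in \mathcal{Q}$, which is exactly $j_Q(X^{\mathcal{Q}})=j_Q(X^Q)$. Since $X^Q\in\mathcal{C}$, this gives $j_Q(X^{\mathcal{Q}})\in j_Q(\mathcal{C})$, i.e.\ $X^{\mathcal{Q}}\in j_Q^{-1}\circ j_Q(\mathcal{C})$ for every $Q\in\mathcal{Q}$. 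Because $\mathcal{Q}$ is a reduction set, this forces $X^{\mathcal{Q}}\in\mathcal{C}$, proving $\mathcal{Q}$-stability.

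For the reverse direction, assume $\mathcal{C}$ is $\mathcal{Q}$-stable. I would show that the nontrivial inclusion $\bigcap_{Q\in\mathcal{Q}}j_Q^{-1}\circ j_Q(\mathcal{C})\subset \mathcal{C}$ holds. Pick any $X$ in this intersection; then for each $Q\in\mathcal{Q}$ there exists $X^Q\in\mathcal{C}$ with $j_Q(X)=j_Q(X^Q)$, i.e.\ $Q(X=X^Q)=1$. Thus the family $(X^Q)_{Q\in\mathcal{Q}}\subset\mathcal{C}$ is $\mathcal{Q}$-coherent and $X$ itself is a $\mathcal{Q}$-aggregator of this family. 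By $\mathcal{Q}$-stability, $X\in\mathcal{C}$, which together with the trivial inclusion $\mathcal{C}\subset \bigcap_{Q\in\mathcal{Q}}j_Q^{-1}\circ j_Q(\mathcal{C})$ yields the equality defining $\mathcal{P}$-sensitivity with reduction set $\mathcal{Q}$ (invoking Lemma~\ref{lem:pSensi:redSet} if one wishes to emphasize that $\mathcal{P}$-sensitivity is automatic once \eqref{eq:reduction} holds).

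The only small care point is the axiom of choice: in the reverse direction I pick one witness $X^Q$ per $Q\in\mathcal{Q}$, which is standard and does not warrant further comment. Neither direction uses any structural property of $L^0_c$ beyond the fact that $j_Q$ collapses $\mathcal{P}$-q.s.\ equality to $Q$-a.s.\ equality.
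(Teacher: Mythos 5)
Your proof is correct and follows essentially the same route as the paper's: the forward direction uses the aggregator property $j_Q(X^{\mathcal{Q}})=j_Q(X^Q)\in j_Q(\mathcal{C})$ together with the reduction-set identity, and the reverse direction builds a $\mathcal{Q}$-coherent family of witnesses for which $X$ is an aggregator. The remarks about choosing witnesses and about Lemma~\ref{lem:pSensi:redSet} are fine but not needed beyond what the paper itself does.
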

\begin{proof}
    Let $\mathcal{C}$ be $\mathcal{P}$-sensitive with reduction set $\mathcal{Q}$. Suppose that $(X^{Q})_{Q \in \mathcal{Q}} \subseteq \mathcal{C}$ is $\mathcal{Q}$-coherent and let $X^{\mathcal{Q}} \in L^{0}_{c}$ be a $\mathcal{Q}$-aggregator. Then $j_{Q}(X^{\mathcal{Q}}) = j_{Q}(X^{Q}) \in \mathcal{C}_{Q}$ for all $Q \in \mathcal{Q}$. Hence, as $\mathcal Q$ is a reduction set for $\mathcal{C}$, $X^{\mathcal{Q}} \in \mathcal{C}$. Thus $\mathcal{C}$ is $\mathcal{Q}$-stable. 

    Now suppose that $\mathcal{C}$ is $\mathcal{Q}$-stable. Let $X \in \bigcap_{Q \in \mathcal{Q}} j_{Q}^{-1} \circ j_{Q}(\mathcal{C})$. Then there exist $(X^{Q})_{Q \in \mathcal{Q}} \subseteq \mathcal{C}$ such that $j_Q(X^Q) = j_Q(X)$, that is $Q[X = X^{Q}] = 1$, for all $Q \in \mathcal{Q}$. Thus, $X$ is a $\mathcal{Q}$-aggregator for $(X^{Q})_{Q \in \mathcal{Q}} \subseteq \mathcal{C}$ and therefore $X \in \mathcal{C}$. Hence, $\mathcal{C}$ is $\mathcal{P}$-sensitive with reduction set $\mathcal{Q}$.
\end{proof}

\begin{exmp} \label{exmp:pSensi:aggre}
    Suppose that the (multivariate) process $S$ in continuous or discrete time describes the discounted price evolution of some financial assets. Let $\mathcal{H}$ be a set of investment strategies and denote the portfolio wealth at terminal time $T>0$ of some $H\in \mathcal{H}$ as $(H \cdot S)_{T}$, which is a random variable. The latter will typically coincide with a stochastic integral at time $T$, and $(H \cdot S)_{0}=0$. The set of superhedgeable claims at cost less than $1$ is given by 
    \begin{equation} \label{eq:superhedge:C}
        \mathcal{C} := \{X \in L^{0}_{c+} \colon \exists H \in \mathcal{H} \ X \preccurlyeq 1 + (H \cdot S)_{T}\}.
    \end{equation}
    It is well-known (see e.g.\ \cite{BKN2021,KS1999, KS2003}) that a bipolar representation of $\mathcal{C}$ is closely related to the set of martingale measures, i.e., probability measures under which the discounted price process $S$ is a martingale, see also Section~\ref{sec:superhedging}. Hence, we are interested in criteria which ensure that $\mathcal{C}$ is $\mathcal P$-sensitive. Indeed, according to Proposition~\ref{prop:pSensi:aggre}, $\mathcal{C}$ is $\mathcal P$-sensitive if and only if $\mathcal{C}$ is $\mathcal{Q}$-stable for some $\mathcal Q\subseteq \mathfrak{P}_c(\Omega)$. This however requires some aggregation property of the portfolio wealths $(H \cdot S)_{T}$. For instance, suppose that $\mathcal{P}$ is of class (S) and that $L^0_c$ is Dedekind complete. The latter assumptions are for instance satisfied in the volatility uncertainty framework discussed in \cite{C2012} and \cite{STZ2011} where Dedekind completeness of $L^0_c$ is achieved by a suitable enlargement of the filtration (\cite[Section 5]{STZ2011} and \cite[Example 5.1]{LMS2022}). Let $\mathcal Q$ be a disjoint supported alternative to $\mathcal P$, see Lemma~\ref{lem:disjoint:alt}. Then any family $(X^Q)_{Q\in \mathcal Q}\subseteq \mathcal C$ is $\mathcal{Q}$-coherent, see Lemma~\ref{lem:Qcoherent} below. Let $X$ be a $\mathcal Q$-aggregator of $(X^Q)_{Q\in \mathcal Q}$ and let $H^Q\in \mathcal H$ be such that $X^Q\preccurlyeq 1 + (H^Q \cdot S)_{T}$. Consider any $\mathcal Q$-aggregator $Y$ of the terminal wealths $((H^Q \cdot S)_{T})_{Q\in \mathcal Q}$, which exists by Lemma~\ref{lem:Qcoherent}. Then
    \begin{equation*}
        X \preccurlyeq 1 + Y.
    \end{equation*}
    A sufficient condition for $\mathcal P$-sensitivity is thus that any such $\mathcal Q$-aggregator of terminal wealths $Y$ can be replicated, that is, there is $H\in \mathcal H$ such that $Y=(H\cdot S)_T$. Indeed, in the case of volatility uncertainty the latter problem is related to finding an aggregator $H$ of the processes $H^Q$, $Q\in \mathcal{Q}$, in the sense of \cite[Definition 3.1]{STZ2011}. This problem is easily solved if the disjoint supports of all probability measures $Q\in \mathcal{Q}$ were $\mathcal{F}_0$-measurable. Then, one could simply paste the processes $H^Q$, $Q\in \mathcal{Q}$, along the supports. More generally, \cite[Theorem 5.1 and Theorem 6.5]{STZ2011} give sufficient conditions for the existence of such an aggregator.
\end{exmp}

\begin{rem}
Note that the set of superhedgeable claims $\mathcal{C}$ given in \eqref{eq:superhedge:C} cannot be handled with Proposition~\ref{prop:sensitivity:local}. The reason is that while   
    \begin{equation*}
        \mathcal{C} = \{X \in L^{0}_{c+} \colon \exists H \in \mathcal{H} \,  \forall P \in \mathcal{P} \ P[X \leq 1 + (H \cdot S)_{T}] = 1\},
    \end{equation*}
Proposition~\ref{prop:sensitivity:local} only allows to conclude $\mathcal{P}$-sensitivity of, for instance,  
\begin{equation*}
        \mathcal{D} = \{X \in L^{0}_{c+} \colon \forall P \in \mathcal{P}\, \exists H \in \mathcal{H} \ P[X \leq 1 + (H \cdot S)_{T}] = 1\}.
    \end{equation*}
    The crucial difference is the order of the quantifiers $\exists H\in \mathcal{H}\, \forall P\in \mathcal{P}$ versus $\forall P\in \mathcal{P}\, \exists H\in \mathcal{H}$ in the conditions defining $\mathcal{C}$ and $\mathcal{D}$, respectively. Indeed, a statement like $\forall P\in \mathcal{P}\, \exists H\in \mathcal{H} \ldots$ enables a local argument under each $P\in \mathcal{P}$, that is the principle of $\mathcal{P}$-sensitivity, whereas $\exists H\in \mathcal{H}\, \forall P\in \mathcal{P}\ldots $ imposes a uniform condition over all probability measures $P\in \mathcal{P}$.
\end{rem}

\begin{lem} \label{lem:Qcoherent}
    Suppose that $\mathcal{P}$ is of class (S) and $L^{0}_{c}$ is Dedekind complete. Let $\mathcal Q $ denote a disjoint supported alternative to $\mathcal P$ (Lemma~\ref{lem:disjoint:alt}). Then any choice $(X^{Q})_{Q \in \mathcal{Q}} \subseteq L^{0}_{c+}$ is $\mathcal{Q}$-coherent. Moreover, any $\mathcal Q$-aggregator $X$ of $(X^{Q})_{Q \in \mathcal{Q}}$ satisfies $X{\bf 1}_{S(Q)}=X^Q{\bf 1}_{S(Q)}$ for all $Q\in \mathcal Q$.
\end{lem}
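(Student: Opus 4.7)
The plan is to construct a $\mathcal{Q}$-aggregator by pasting the restrictions $X^Q\mathbf{1}_{S(Q)}$ along the disjoint supports via Dedekind completeness. Because the family $(X^Q)_{Q\in\mathcal Q}$ need not be bounded in $L^0_c$, I first replace it by a bounded one via the homeomorphism $\phi\colon[0,\infty)\to[0,1)$, $\phi(t)=t/(1+t)$, with inverse $\phi^{-1}(s)=s/(1-s)$. Setting $Y^Q:=\phi(X^Q)\mathbf{1}_{S(Q)}\in L^\infty_{c+}$, one has $0\preccurlyeq Y^Q\preccurlyeq 1$ uniformly in $Q$, so Dedekind completeness of $L^0_c$ yields $Y:=\sup_{Q\in\mathcal Q}Y^Q$ in $L^0_c$ with $0\preccurlyeq Y\preccurlyeq 1$.

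The central computation is to identify $j_Q(Y)$ for fixed $Q\in\mathcal Q$. Since $Q$ is supported, Lemma~\ref{lem:qsOasO}(1) lets us interchange $j_Q$ with the supremum:
\begin{equation*}
    j_Q(Y)=\sup_{Q'\in\mathcal Q}j_Q\bigl(\phi(X^{Q'})\mathbf{1}_{S(Q')}\bigr)\quad\text{in }L^0_Q.
\end{equation*}
For $Q'\neq Q$, disjointness of supports gives $Q(S(Q'))=Q(S(Q')\cap S(Q))=0$, because $S(Q)\cap S(Q')$ is $\mathcal P$-polar and $Q(S(Q))=1$; hence $j_Q(Y^{Q'})=0$. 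For $Q'=Q$, $Q(S(Q))=1$ yields $j_Q(Y^Q)=j_Q(\phi(X^Q))=\phi(j_Q(X^Q))$. Thus $j_Q(Y)=\phi(j_Q(X^Q))$, and since $X^Q\in L^0_{Q+}$ is $Q$-a.s.\ finite, $j_Q(Y)$ is $Q$-a.s.\ strictly less than $1$. As this holds for every $Q\in\mathcal Q$ and $\mathcal Q\approx\mathcal P$, the event $\{Y=1\}$ is $\mathcal Q$-polar and hence $\mathcal P$-polar, so $Y\preccurlyeq 1$ with $\{Y=1\}$ negligible.

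Accordingly $X:=\phi^{-1}(Y)=Y/(1-Y)$ defines an element of $L^0_{c+}$, and since $\phi^{-1}$ acts pointwise on representatives, $j_Q(X)=\phi^{-1}(j_Q(Y))=j_Q(X^Q)$, that is $Q(X=X^Q)=1$ for every $Q\in\mathcal Q$. This establishes $\mathcal Q$-coherence and produces an explicit aggregator. For the \emph{moreover} part, let $X$ be any $\mathcal Q$-aggregator of $(X^Q)_{Q\in\mathcal Q}$. Then $Q(X=X^Q)=1$, so the event $N:=\{X\neq X^Q\}\cap S(Q)$ satisfies $Q(N)=0$ and $N\subset S(Q)$; Definition~\ref{defi:supp}(1)(b) forces $N$ to be $\mathcal P$-polar, giving $X\mathbf{1}_{S(Q)}=X^Q\mathbf{1}_{S(Q)}$ in $L^0_c$.

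The main obstacle is the possible unboundedness of $(X^Q)_{Q\in\mathcal Q}$, which prevents a direct application of Dedekind completeness to $(X^Q\mathbf{1}_{S(Q)})_{Q\in\mathcal Q}$. The bounded transform $\phi$ sidesteps this by relocating the supremum into the order interval $[0,1]$ of $L^\infty_{c+}$, where it exists by hypothesis, and pulling back via $\phi^{-1}$ is legitimate precisely because $\{Y=1\}$ turns out to be $\mathcal P$-polar.
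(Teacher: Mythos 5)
Your proof is correct and follows essentially the same route as the paper's: both paste the $X^Q$ along the disjoint supports using Dedekind completeness, compute $j_Q$ of the resulting supremum via Lemma~\ref{lem:qsOasO}(1), and use disjointness of the supports to annihilate the contributions from $Q'\neq Q$. The only difference is how unboundedness is handled---the paper truncates at level $n$, takes the supremum of the bounded family $(X^Q\wedge n)\mathbf{1}_{S(Q)}$ for each $n$, and passes to the $\mathcal P$-quasi-sure limit of the resulting increasing sequence, whereas you conjugate by the bounded homeomorphism $\phi(t)=t/(1+t)$ and take a single supremum in the order interval $[0,1]$, with the observation that $\{Y=1\}$ is $\mathcal P$-polar playing the role of the paper's finiteness of the limit; the two devices encode the same fact.
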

\begin{proof}
    The last assertion follows from $X{\bf 1}_{S(Q)}=X^Q{\bf 1}_{S(Q)}$ if and only if $Q[X = X^Q] = 1$. For the first assertion let $(X^{Q})_{Q \in \mathcal{Q}} \subseteq L^{0}_{c}$. For $n\in \N$ let $X^n\in L^0_{c+}$ denote the least upper bound of the bounded family $(X^{Q}\wedge n){\bf 1}_{S(Q)}$, $Q\in \mathcal Q$. It then follows that $Q[X^n = X^Q \wedge n] = 1$ for all $Q\in \mathcal{Q}$ (see Lemma~\ref{lem:qsOasO}) and thus \[X^n{\bf 1}_{S(Q)}=(X^Q\wedge n){\bf 1}_{S(Q)}\preccurlyeq X^{Q}{\bf 1}_{S(Q)}.\] Therefore, $X^n\preccurlyeq X^{n+1}$ for all $n\in \N$, and the $\mathcal P$-quasi sure limit $X:=\lim_{n\to \infty} X^n\in L^0_c$ exists, where $(X_n)\subseteq L^0_c$ is said to converge to $X\in L^0_c$ $\mathcal P$-quasi surely if $P[X_n \to X] = 1$ for all $P\in \mathcal P$, and satisfies
    \[X \mathbf{1}_{S(Q)}=X^Q \mathbf{1}_{S(Q)}.\]
    Hence, $X$ is a $\mathcal Q$-aggregator of $(X^{Q})_{Q \in \mathcal{Q}}$. 
\end{proof}

\subsection{$\mathcal P$-sensitivity as a Consequence of Weak Closedness}

Recall the following classical bipolar theorem (see, e.g., \cite[Theorem~5.103]{AB2006}) for locally convex topologies.

\begin{thm}
    Let $\langle \mathcal{X}, \mathcal{Y} \rangle$ be a dual pair, see \cite[Definition~5.90]{AB2006}, and let $\emptyset \neq \mathcal{C} \subseteq \mathcal{X}$. Define $\mathcal{C}^\circ := \{Y \in \mathcal{Y} \colon \forall X \in \mathcal C \ \langle X, Y \rangle \leq 1\}$ and $\mathcal{C}^{\circ\circ}:=\{X \in \mathcal{X} \colon \forall Y \in \mathcal C^\circ \ \langle X, Y \rangle \leq 1\}$. $\mathcal{C} = \mathcal{C}^{\circ\circ}$ if and only if $\mathcal C$ is convex, $\sigma(\mathcal{X}, \mathcal{Y})$-closed, and $0\in \mathcal{C}$.
\end{thm}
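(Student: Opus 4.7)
The plan is to prove both implications separately, with the nontrivial direction relying on the Hahn--Banach separation theorem in the locally convex space $(\mathcal{X}, \sigma(\mathcal{X}, \mathcal{Y}))$.

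For the \emph{only if} direction, I would verify directly from the definitions that $\mathcal{C}^{\circ\circ}$ is automatically convex, contains $0$ (since $\langle 0, Y\rangle = 0 \leq 1$ for every $Y \in \mathcal{Y}$), and is $\sigma(\mathcal{X},\mathcal{Y})$-closed, because it is the intersection $\bigcap_{Y \in \mathcal{C}^\circ} \{X \in \mathcal{X} \mid \langle X, Y\rangle \leq 1\}$ of weakly closed half-spaces. Hence if $\mathcal{C} = \mathcal{C}^{\circ\circ}$, then $\mathcal{C}$ inherits all three properties.

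For the \emph{if} direction, the inclusion $\mathcal{C} \subset \mathcal{C}^{\circ\circ}$ is immediate from unfolding the definitions. For the reverse inclusion, I would argue by contraposition: given $X_{0} \in \mathcal{X} \setminus \mathcal{C}$, I want to produce $Y \in \mathcal{C}^{\circ}$ witnessing $\langle X_{0}, Y \rangle > 1$. Since $\mathcal{C}$ is non-empty, convex, and $\sigma(\mathcal{X},\mathcal{Y})$-closed while $\{X_{0}\}$ is compact and disjoint from $\mathcal{C}$, the Hahn--Banach separation theorem for locally convex spaces yields $Y \in \mathcal{Y}$ (the weak dual is exactly $\mathcal{Y}$) and real numbers $\alpha < \beta$ with $\langle X, Y\rangle \leq \alpha$ for all $X \in \mathcal{C}$ and $\langle X_{0}, Y\rangle \geq \beta$.

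The key step is the normalization that converts this separating functional into an element of the polar. Because $0 \in \mathcal{C}$, plugging $X = 0$ gives $0 \leq \alpha$, so we have $\alpha \geq 0$. If $\alpha > 0$, then $Y/\alpha \in \mathcal{C}^{\circ}$ and $\langle X_{0}, Y/\alpha\rangle \geq \beta/\alpha > 1$. If $\alpha = 0$, then $\langle X, Y\rangle \leq 0$ for all $X \in \mathcal{C}$, so every positive multiple $\lambda Y$ lies in $\mathcal{C}^{\circ}$; choosing $\lambda > 1/\langle X_{0}, Y\rangle$ (possible since $\langle X_{0}, Y\rangle > 0$) then yields $\langle X_{0}, \lambda Y\rangle > 1$. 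Either way, $X_{0} \notin \mathcal{C}^{\circ\circ}$, which completes the proof. The only real subtlety I anticipate is remembering to split into the two cases $\alpha > 0$ and $\alpha = 0$; the latter is precisely where the hypothesis $0 \in \mathcal{C}$ is used (without it, one could only separate a convex set by an affine hyperplane and no normalization to the level $1$ would exist).
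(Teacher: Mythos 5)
Your proof is correct. Note that the paper does not prove this statement at all---it is quoted verbatim as a known result from \citet[Theorem 5.103]{AB2005}---so there is no in-paper argument to compare against; what you have written is the standard textbook proof via strong Hahn--Banach separation of the $\sigma(\mathcal{X},\mathcal{Y})$-closed convex set $\mathcal{C}$ from the compact singleton $\{X_0\}$, using that the topological dual of $(\mathcal{X},\sigma(\mathcal{X},\mathcal{Y}))$ is $\mathcal{Y}$ and that $0\in\mathcal{C}$ forces $\alpha\geq 0$. As a minor streamlining, you can avoid the case split on $\alpha$ by picking any $\gamma$ with $\max\{\alpha,0\}<\gamma<\beta$ and normalizing by $\gamma$, since then $Y/\gamma\in\mathcal{C}^\circ$ and $\langle X_0,Y/\gamma\rangle\geq\beta/\gamma>1$ in one stroke.
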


The following result shows that if $\mathcal{C}\subseteq \mathcal{X}$ is convex and $\sigma(\mathcal{X}, \mathcal{Y})$-closed with respect to some dual pair $\langle \mathcal{X},\mathcal{Y}\rangle$, where $\mathcal{X}\subseteq L^0_c$ and $\mathcal{Y} \subseteq \mathrm{ca}_{c}$, then $\mathcal{C}$ is essentially $\mathcal{P}$-sensitive.

\begin{thm}\label{thm:weak:pSensi} 
    Let  $\mathcal{X}\subseteq L^0_c$ and $\mathcal{Y} \subseteq \mathrm{ca}_{c}$ be subspaces such that $\langle \mathcal{X},\mathcal{Y}\rangle$ is a dual pair, and $\mathcal{Y}$ satisfies $\mu \in \mathcal{Y} \, \Rightarrow |\mu|\in \mathcal{Y}$.
    Suppose that $\mathcal{C} \subseteq \mathcal{X}$ is non-empty, convex, and $\sigma(\mathcal{X}, \mathcal{Y})$-closed. Then there is $\mathcal{Q}\subseteq \mathfrak{P}_{c}(\Omega)\cap  \mathcal Y$ such that
    \begin{equation} \label{eq:Psensitive:X}
        \mathcal{C} = \bigcap_{Q\in \mathcal{Q}}j^{-1}_Q\circ j_Q(\mathcal{C}) \cap \mathcal{X}.
    \end{equation} 
\end{thm}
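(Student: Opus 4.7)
The strategy is to combine the classical Hahn–Banach separation theorem with the reasoning behind Lemma~\ref{lem:Psensi:dual}. Since $\mathcal{C}$ is non-empty, convex, and $\sigma(\mathcal{X},\mathcal{Y})$-closed, Hahn–Banach separation applied to any $X_{0} \in \mathcal{X} \setminus \mathcal{C}$ yields a family $\mathcal{M} \subset \mathcal{Y} \times \mathbb{R}$ such that
\begin{equation*}
    \mathcal{C} = \bigcap_{(\mu, r) \in \mathcal{M}} \left\{ X \in \mathcal{X} \;\middle|\; \textstyle\int X \, d\mu \leq r \right\}.
\end{equation*}
Trivial half-spaces corresponding to $\mu = 0$ can be discarded, and if $\mathcal{C} = \mathcal{X}$ then $\mathcal{M}$ may be taken empty and there is nothing to show.

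For every remaining $(\mu, r) \in \mathcal{M}$ the hypothesis on $\mathcal{Y}$ ensures $\lvert \mu \rvert \in \mathcal{Y}$, and, $\mathcal{Y}$ being a subspace, we obtain
\begin{equation*}
    Q_{\mu} := \frac{\lvert \mu \rvert}{\lvert \mu \rvert(\Omega)} \in \mathfrak{P}_{c}(\Omega) \cap \mathcal{Y}.
\end{equation*}
Set $\mathcal{Q} := \{ Q_{\mu} \mid (\mu, r) \in \mathcal{M}\} \subset \mathfrak{P}_{c}(\Omega) \cap \mathcal{Y}$. The inclusion $\mathcal{C} \subset \bigcap_{Q \in \mathcal{Q}} j_{Q}^{-1} \circ j_{Q}(\mathcal{C}) \cap \mathcal{X}$ is automatic, so it remains to prove the reverse inclusion.

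Let $X \in \mathcal{X}$ satisfy $j_{Q}(X) \in j_{Q}(\mathcal{C})$ for every $Q \in \mathcal{Q}$. Fix $(\mu, r) \in \mathcal{M}$ and pick $Y \in \mathcal{C}$ with $j_{Q_{\mu}}(X) = j_{Q_{\mu}}(Y)$, i.e.\ $Q_{\mu}(X = Y) = 1$. This forces $\lvert \mu \rvert(\{X \neq Y\}) = 0$ and hence $\mu(\{X \neq Y\}) = 0$, so $X$ is $\mu$-integrable with $\int X \, d\mu = \int Y \, d\mu \leq r$ because $Y \in \mathcal{C}$. Since $(\mu, r) \in \mathcal{M}$ was arbitrary, $X \in \mathcal{C}$, which proves \eqref{eq:Psensitive:X}. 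The only real obstacle is invoking Hahn–Banach in the precise form needed to cover convex closed sets that do not contain $0$ (which forces the indexing by pairs $(\mu,r)$ rather than simply by $\mathcal{Y}$); once this is in place the argument reduces to a direct transcription of the idea used in Lemma~\ref{lem:Psensi:dual}.
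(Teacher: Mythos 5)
Your proposal is correct and follows essentially the same route as the paper: the paper applies the Fenchel--Moreau theorem to the convex indicator function $\delta(\cdot\mid\mathcal{C})$, which is just a repackaging of your Hahn--Banach half-space representation (with $f^{*}(\mu)$ playing the role of your constants $r$), and then likewise normalizes $\lvert\mu\rvert/\lvert\mu\rvert(\Omega)$ and concludes via the argument of Lemma~\ref{lem:Psensi:dual}. No gaps.
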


The property \eqref{eq:Psensitive:X} may be viewed as $\mathcal{C}$ being \emph{$\mathcal{P}$-sensitive in $\mathcal{X}$ with reduction set $\mathcal{Q}$}.

\begin{proof}
    The convex indicator function $f \colon \mathcal{X} \rightarrow [0, \infty]$ defined as
    \begin{equation*}
        f(X) := \delta(X \mid \mathcal{C}) =
        \begin{cases}
            0, &X \in \mathcal{C}, \\
            \infty, &X \not\in \mathcal{C},
        \end{cases}
    \end{equation*}
    is convex and $\sigma(\mathcal{X}, \mathcal{Y})$-lower semi-continuous. By the Fenchel-Moreau theorem, 
    \begin{equation*}
        f(X) = f^{**}(X) = \sup_{\mu \in \mathcal{Y}} \int X d\mu - f^{*}(\mu)
    \end{equation*} where $f^\ast :\mathcal Y\to (-\infty, \infty]$ is given by \[f^\ast (\mu)=\sup_{X\in \mathcal X}  \int X d\mu - f(X).\]
    We may thus represent $\mathcal C$ as
    \begin{align*}
        \mathcal{C} &= \{X \in \mathcal{X} \colon f(X) = 0\} = \bigcap_{\mu \in \mathrm{dom} f^{*} \setminus \{0\}} \{X \in \mathcal{X} \colon \int X d\mu - f^{*}(\mu) \leq 0\},
    \end{align*}
    where $\mathrm{dom} f^{*}:=\{\mu \in \mathcal Y \colon f^\ast (\mu)<\infty\}$, and the last equality uses the fact that for $\mu = 0$
    \begin{equation*}
        f^{*}(\mu)  = - \inf_{Y \in \mathcal{X}} f(Y) = 0 = \int X d\mu
    \end{equation*}
    for all $X \in \mathcal{X}$. Let $\mathcal{Q} := \{\frac{\lvert \mu \rvert}{\lvert \mu \rvert(\Omega)} \colon \mu \in \mathrm{dom} f^{*} \setminus \{0\}\}$.  Then $\mathcal{Q}\subseteq \mathcal{Y}\cap \mathfrak{P}_c(\Omega)$.     
    Now \eqref{eq:Psensitive:X} follows similar to the proof of Lemma~\ref{lem:Psensi:dual}.
\end{proof}

\begin{cor}
    In the situation of Theorem~\ref{thm:weak:pSensi} suppose that $\mathcal{X}$ is $\mathcal{P}$-sensitive with reduction set $\mathcal{Y}\cap \mathfrak{P}_c(\Omega)$, then $\mathcal{C}$ is $\mathcal{P}$-sensitive with reduction set $\mathcal{Y}\cap \mathfrak{P}_c(\Omega)$. 
\end{cor}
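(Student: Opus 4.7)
The plan is to combine the representation of $\mathcal{C}$ furnished by Theorem~\ref{thm:weak:pSensi} with the hypothesized $\mathcal{P}$-sensitivity of the ambient space $\mathcal{X}$. Once I have both ingredients, the proof reduces to an inclusion-chasing argument reminiscent of Lemma~\ref{lem:pSensi:redSet}.

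Specifically, by Theorem~\ref{thm:weak:pSensi} there exists $\mathcal{Q}\subset\mathcal{Y}\cap\mathfrak{P}_c(\Omega)$ such that
\begin{equation*}
    \mathcal{C}=\bigcap_{Q\in\mathcal{Q}}j_Q^{-1}\circ j_Q(\mathcal{C})\cap\mathcal{X}.
\end{equation*}
Setting $\mathcal{R}:=\mathcal{Y}\cap\mathfrak{P}_c(\Omega)$, I claim that $\mathcal{R}$ is a reduction set for $\mathcal{C}$; in view of Lemma~\ref{lem:pSensi:redSet}(2) this will immediately give $\mathcal{P}$-sensitivity of $\mathcal{C}$.

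To prove the claim, the trivial inclusion $\mathcal{C}\subset\bigcap_{Q\in\mathcal{R}}j_Q^{-1}\circ j_Q(\mathcal{C})$ holds by definition. For the reverse, fix $X\in L^0_c$ with $j_Q(X)\in j_Q(\mathcal{C})$ for every $Q\in\mathcal{R}$. Since $\mathcal{C}\subset\mathcal{X}$, we have $j_Q(\mathcal{C})\subset j_Q(\mathcal{X})$, so $j_Q(X)\in j_Q(\mathcal{X})$ for every $Q\in\mathcal{R}$. The assumed $\mathcal{P}$-sensitivity of $\mathcal{X}$ with reduction set $\mathcal{R}$ then yields $X\in\mathcal{X}$. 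On the other hand, $\mathcal{Q}\subset\mathcal{R}$, so in particular $j_Q(X)\in j_Q(\mathcal{C})$ for every $Q\in\mathcal{Q}$, and the representation of $\mathcal{C}$ from Theorem~\ref{thm:weak:pSensi} forces $X\in\mathcal{C}$.

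There is no real obstacle: the only subtlety is that the representation in Theorem~\ref{thm:weak:pSensi} is only a $\mathcal{P}$-sensitivity statement \emph{relative to} $\mathcal{X}$, and the additional intersection with $\mathcal{X}$ is precisely what the hypothesis on $\mathcal{X}$ absorbs. Hence the corollary is essentially a transitivity statement for reduction sets along the inclusion $\mathcal{C}\subset\mathcal{X}\subset L^0_c$.
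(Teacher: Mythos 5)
Your argument is correct and follows essentially the same route as the paper, which deduces the corollary from the representation \eqref{eq:Psensitive:X} together with Lemma~\ref{lem:intersect:Psensi}; you have merely inlined the element-chasing that the intersection lemma encapsulates (the set $\bigcap_{Q\in\mathcal{Q}}j_Q^{-1}\circ j_Q(\mathcal{C})$ is automatically $\mathcal{P}$-sensitive with reduction set $\mathcal{Q}$, hence also with the larger reduction set $\mathcal{Y}\cap\mathfrak{P}_c(\Omega)$, and intersecting with $\mathcal{X}$ preserves this). No gap.
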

\begin{proof}
    This follows from \eqref{eq:Psensitive:X} and Lemma~\ref{lem:intersect:Psensi}.
\end{proof}

The next simple example shows that even in a dominated framework the $\mathcal P$-sensitive sets in $L^0_c$ do not all coincide with weakly closed sets in some locally convex subspace $\mathcal X$ of $L^0_c$.

\begin{exmp}
    Let $\mathcal{P} = \{P\}$ for a non-atomic probability measure $P \in \mathfrak{P}(\Omega)$. In this case, it is well-known that there is no subspace $\mathcal Y\subseteq ca_P \simeq L^1_{P}$ such that  $\langle L^{0}_{P}, \mathcal{Y} \rangle$ is a dual pair. Indeed, for any $\mu\in ca_P \setminus \{0\}$ there is $X\in L^0_{P+} $ such that $ \int X d \mu$ is not well-defined or infinite. However, $\mathcal{C} := L^{0}_{P+}$ is convex, solid, and trivially $P$-sensitive with reduction set $\{P\}$. Also, $\mathcal{C}$ admits a bipolar representation with polar set $\mathcal{C}^{\circ} = \{\mu \in ca_{P+} \colon \forall X \in \mathcal C \ \int X d\mu \leq 1 \} = \{0\}$ and $\mathcal{C}^{\circ\circ} = \{X \in L^{0}_{c+} \colon 0 \leq 1\} = L^{0}_{c+} = \mathcal{C}$, see Section~\ref{sec:bip:thm}. 
\end{exmp}

\subsection{$\mathcal P$-Sensitivity as a Consequence of Class (S) and Order Closedness}

As mentioned previously, a widely used closedness requirement in robust frameworks is order closedness, see \cite{GM2020} and \cite{LMS2022}. Supposing that $\mathcal P$ is of class (S), we will in the following show that order closedness  combined with convexity and solidness already implies $\mathcal{P}$-sensitivity.

\begin{lem}
    Suppose that $\mathcal{P}$ is of class (S) and let $\mathcal Y\subseteq sca_{c}$ be any linear space separating the points of $L^\infty_c$, i.e., for any $X,Y\in L^\infty_c$ such that $X\neq Y$ there is $\mu \in \mathcal{Y}$ such that $\int Xd\mu \neq \int Yd\mu$. Moreover, let $\mathcal{C}\subseteq L^0_{c}$ be convex, solid, and order closed. Then $\mathcal{C} \cap L^{\infty}_{c}$ is $\sigma(L^{\infty}_{c}, \mathcal{Y})$-closed.
\end{lem}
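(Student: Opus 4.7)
The plan is a Hahn--Banach separation argument: I would show that for every $X_0 \in L^\infty_c \setminus \mathcal{D}$, where $\mathcal{D} := \mathcal{C} \cap L^\infty_c$, there exists $\mu \in \mathcal{Y}$ with $\int X_0 \, d\mu > \sup_{X \in \mathcal{D}} \int X \, d\mu$. As a preliminary, $\mathcal{D}$ inherits convexity, solidness in $L^\infty_c$, and order closedness from $\mathcal{C}$, since $L^\infty_c$ is an order ideal in $L^0_c$. In particular $\mathcal{D}$ is norm closed, because $\|X_n - X\|_\infty \to 0$ forces $|X_n - X| \preccurlyeq \|X_n - X\|_\infty \cdot \mathbf{1}_\Omega$, hence $X_n \order{c} X$, and order closedness then delivers $X \in \mathcal{D}$.

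Next, I would exploit class (S) by fixing a disjoint supported alternative $\mathcal{Q} \approx \mathcal{P}$ via Lemma~\ref{lem:disjoint:alt}. By Lemma~\ref{lem:solid}, each $j_Q(\mathcal{D}) \subset L^\infty_Q$ is solid; the $L^\infty_Q$-version of Lemma~\ref{lem:OqsCl:OasCl} combined with Lemma~\ref{lem:Qclosed:dom} and the classical duality $L^\infty_Q = (L^1_Q)^*$ yields $\sigma(L^\infty_Q, L^1_Q)$-closedness of $j_Q(\mathcal{D})$. Suppose $X_0 \notin \mathcal{D}$ and that some $Q \in \mathcal{Q}$ witnesses this in the sense $j_Q(X_0) \notin j_Q(\mathcal{D})$. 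Then classical Hahn--Banach in $L^\infty_Q$ gives $\nu \in L^1_Q$ with $E_Q[\nu \, j_Q(X_0)] > \sup_{Y \in j_Q(\mathcal{D})} E_Q[\nu \, Y]$, and defining $\mu_Q \in sca_c$ by $d\mu_Q = \nu \, dQ$ (with support inside $S(Q)$, so that $\int X \, d\mu_Q = E_Q[\nu \, j_Q(X)]$) produces a strict separator in $sca_c$.

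Two obstacles remain. The first is showing that $X_0 \notin \mathcal{D}$ is actually detected by some $Q \in \mathcal{Q}$, i.e., $\mathcal{D}$ is $\mathcal{P}$-sensitive with reduction set $\mathcal{Q}$. I would derive this from the disjoint supports of $\mathcal{Q}$: given $j_Q(X_0) \in j_Q(\mathcal{D})$ for all $Q$, there exist $Y_Q \in \mathcal{D}$ with $X_0 = Y_Q$ on $S(Q)$ up to $\mathcal{P}$-polar sets, and one pastes the $Y_Q$ along the disjoint supports, using solidness and order closedness to identify $X_0$ as an element of $\mathcal{D}$ (in spirit of Lemma~\ref{lem:Qcoherent}). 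The second, and main, obstacle is promoting the $sca_c$-separator $\mu_Q$ to an element of $\mathcal{Y}$. Since $\mathcal{Y}$ separates $L^\infty_c$, its annihilator in $L^\infty_c$ is trivial, so bipolar duality in $\langle L^\infty_c, sca_c \rangle$ gives $\mathcal{Y}$ as a $\sigma(sca_c, L^\infty_c)$-dense subspace of $sca_c$. I would then apply a Krein--\v{S}mulian-style reduction: it suffices to prove $\mathcal{D} \cap r B_{L^\infty_c}$ is $\sigma(L^\infty_c, \mathcal{Y})$-closed for each $r > 0$. On such a norm-bounded slice the Hausdorff topologies $\sigma(L^\infty_c, \mathcal{Y})$ and $\sigma(L^\infty_c, sca_c)$ agree provided $r B_{L^\infty_c}$ is $\sigma(L^\infty_c, sca_c)$-compact. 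This compactness is the technically delicate point: in the classical case it is Banach--Alaoglu, but in the non-dominated setting it must be engineered from the $Q$-wise Banach--Alaoglu in $L^\infty_Q$, patched along the disjoint supports of $\mathcal{Q}$ using class (S). Once compactness is secured, the density of $\mathcal{Y}$ transfers the $sca_c$-separation to a $\mathcal{Y}$-separation, completing the argument.
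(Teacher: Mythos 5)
Your strategy diverges completely from the paper's, which disposes of the lemma in a few lines of abstract Riesz space theory: since $\mathcal{Y}\subset sca_c$ consists of order continuous functionals and separates points, the absolute weak topology $\lvert\sigma\rvert(L^{\infty}_{c},\mathcal{Y})$ is a Hausdorff locally convex-solid topology with the Lebesgue property; by \citet[Lemmas 4.2 and 4.20]{AB2003} every solid order closed set is closed for such a topology, and convex $\lvert\sigma\rvert$-closed sets are $\sigma(L^{\infty}_{c},\mathcal{Y})$-closed. No separation, no class-(S) localization, no compactness. Your Hahn--Banach route has two genuine gaps. First, you need $\mathcal{D}=\mathcal{C}\cap L^{\infty}_{c}$ to be $\mathcal{P}$-sensitive with reduction set $\mathcal{Q}$, and you propose to obtain this by pasting the witnesses $Y_Q$ along the disjoint supports. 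But from $X_0\mathbf{1}_{S(Q)}\in\mathcal{D}$ for each $Q$ you cannot conclude that the finite partial pastings $X_0\mathbf{1}_{\bigcup_{Q\in F}S(Q)}=\sum_{Q\in F}X_0\mathbf{1}_{S(Q)}$ lie in $\mathcal{D}$: a convex solid set is not stable under sums of disjointly supported elements (you only get membership in $\lvert F\rvert\cdot\mathcal{D}$), and there is no single element of $\mathcal{D}$ dominating the sum against which solidness could be invoked. The paper's pasting device, Lemma~\ref{lem:Qcoherent}, moreover assumes Dedekind completeness of $L^0_c$, which is not available here. In fact the sensitivity statement you want is Corollary~\ref{cor:oCLsd:sensi}, which the paper \emph{deduces from} the present lemma via Theorem~\ref{thm:weak:pSensi}, so using it as an ingredient puts the cart before the horse.

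Second, and irreparably, your Krein--\v{S}mulian reduction requires the ball $rB_{L^{\infty}_{c}}$ to be $\sigma(L^{\infty}_{c},sca_c)$-compact. This is false in general: $L^{\infty}_{c}$ is not a dual space here (the identity $ca_c^{*}=L^{\infty}_{c}$ is an extra hypothesis in Section~\ref{sec:appli:1}, not a theorem), so Banach--Alaoglu is unavailable, and no amount of $Q$-wise patching will create compactness. Concretely, in Example~\ref{exmp:classS} ($\Omega=[0,1]$ with the Borel $\sigma$-algebra and $\mathcal{P}=\mathfrak{P}(\Omega)$) every supported measure is purely atomic with countable support, while $L^{\infty}_{c}$ is the space of bounded Borel functions. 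Take a non-Borel $A\subset[0,1]$ and the net $(\chi_F)_F$ indexed by the finite subsets $F$ of $A$ ordered by inclusion; it lies in the unit ball, and testing against the Dirac measures $\delta_\omega\in sca_c$ shows that any $\sigma(L^{\infty}_{c},sca_c)$-limit of any subnet would have to equal $\chi_A$ pointwise, which is not Borel. Hence no subnet converges and the ball is not compact. Without this compactness the topologies $\sigma(L^{\infty}_{c},\mathcal{Y})$ and $\sigma(L^{\infty}_{c},sca_c)$ need not have the same closed convex bounded sets, and the passage from an $sca_c$-separator to a $\mathcal{Y}$-separator collapses. The lesson of the paper's proof is that order closedness of a \emph{solid} set is already, by itself, closedness for every locally convex-solid topology generated by order continuous functionals, which is exactly what the hypothesis $\mathcal{Y}\subset sca_c$ provides.
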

\begin{proof}
    Let $\tau := \lvert\sigma\rvert(L^{\infty}_{c}, \mathcal{Y})$ be the absolute weak topology on $L^{\infty}_{c}$ generated by $\mathcal{Y}$ (see \cite[Definition~2.32]{AB2003}). The topology $\tau$ is locally convex-solid and Hausdorff with the Lebesgue property (see \cite[Definition~3.1]{AB2003}) since $sca_c$ may be identified with the order continuous dual of $L^\infty_c$ (see, e.g., \cite{LMS2022}). Suppose $\mathcal{C}\neq \emptyset$. Consider the set $\mathcal{D} := \mathcal{C} \cap L^{\infty}_{c}$. $\mathcal{D}$ is non-empty (because for each $X \in \mathcal{C}$ and $k \in \N$, $-k\vee X \wedge k  \in \mathcal{D}$ by solidness), convex, solid, and order closed. Using \cite[Lemma~4.2 and Lemma~4.20]{AB2003}, we infer that $\mathcal{D}$ is $\lvert\sigma\rvert(L^{\infty}_{c},\mathcal{Y})$-closed. As $\lvert\sigma\rvert(L^{\infty}_{c},\mathcal{Y})$ and $\sigma(L^{\infty}_{c}, \mathcal{Y})$ share the same closed convex sets (see \cite[Theorem~8.49 and Corollary~5.83]{AB2006}), $\mathcal{D}$ is $\sigma(L^{\infty}_{c}, \mathcal{Y})$-closed.
\end{proof}

\begin{cor} \label{cor:oCLsd:sensi}
    Suppose that $\mathcal{P}$ is of class (S) and let $\mathcal Y\subseteq sca_c$ be a subspace separating the points of $L^\infty_c$ such that $\mu \in \mathcal{Y} \, \Rightarrow |\mu|\in \mathcal{Y}$. Further assume that $\mathcal{C}\subseteq L^0_{c}$ is convex, solid, and order closed. Then $\mathcal{C}$ is $\mathcal{P}$-sensitive with reduction set $ \mathfrak{P}_c(\Omega) \cap \mathcal{Y}$.
\end{cor}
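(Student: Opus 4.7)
The plan is to reduce to the dominated setting by truncating to $L^\infty_c$ and applying Theorem~\ref{thm:weak:pSensi} there, then lifting back to $L^0_c$ using order closedness of $\mathcal{C}$.

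First, the preceding lemma gives that $\mathcal{C} \cap L^\infty_c$ is $\sigma(L^\infty_c, \mathcal{Y})$-closed. Since $\mathcal{Y}$ separates the points of $L^\infty_c$, the pair $\langle L^\infty_c, \mathcal{Y}\rangle$ is a dual pair, and $\mathcal{Y}$ is closed under total variations by assumption. Assuming $\mathcal{C}\neq \emptyset$, solidness shows $\mathcal{C}\cap L^\infty_c\neq \emptyset$ (truncate any element of $\mathcal{C}$), so Theorem~\ref{thm:weak:pSensi} applied with $\mathcal{X} = L^\infty_c$ produces some $\mathcal{Q}_0 \subset \mathfrak{P}_c(\Omega) \cap \mathcal{Y}$ serving as a reduction set for $\mathcal{C} \cap L^\infty_c$ in $L^\infty_c$. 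The standard chain-of-inclusions argument from the proof of Lemma~\ref{lem:pSensi:redSet}(1) then upgrades this to the larger set $\mathcal{Q}_1 := \mathfrak{P}_c(\Omega) \cap \mathcal{Y}$, i.e.\ $\mathcal{C} \cap L^\infty_c = \bigcap_{Q \in \mathcal{Q}_1} (j_Q^{-1} \circ j_Q(\mathcal{C} \cap L^\infty_c)) \cap L^\infty_c$.

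For the lifting step, fix $X \in L^0_c$ with $j_Q(X) \in j_Q(\mathcal{C})$ for every $Q \in \mathcal{Q}_1$, and choose $Y_Q \in \mathcal{C}$ with $Q(X = Y_Q) = 1$. The truncations $X^k := (-k) \vee X \wedge k$ and $Y_Q^k := (-k) \vee Y_Q \wedge k$ lie in $L^\infty_c$ and coincide $Q$-a.s., and solidness yields $Y_Q^k \in \mathcal{C} \cap L^\infty_c$. Hence $j_Q(X^k) \in j_Q(\mathcal{C} \cap L^\infty_c)$ for every $Q \in \mathcal{Q}_1$, and the previous step gives $X^k \in \mathcal{C}$. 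Now the sequence $Y_k := (|X| - k)^+ \in L^0_{c+}$ is decreasing with $\inf_k Y_k = 0$ (verified pointwise on any real-valued representative), and $|X^k - X| \preccurlyeq Y_k$, so $X^k \order{c} X$. Order closedness of $\mathcal{C}$ finally gives $X \in \mathcal{C}$.

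In the variant where $\mathcal{C}$ is solid in $L^0_{c+}$ rather than in $L^0_c$, one first observes that $\mathcal{Q}_1$ dominates $\mathcal{P}$: if $A$ is not $\mathcal{P}$-polar, then $\mathbf{1}_A \neq 0$ in $L^\infty_c$, so some $\mu \in \mathcal{Y}$ satisfies $\mu(A)\neq 0$, whence $|\mu|/|\mu|(\Omega) \in \mathcal{Q}_1$ charges $A$. From $Q(X = Y_Q) = 1$ with $Y_Q \geq 0$ for all $Q \in \mathcal{Q}_1$ this forces $X \in L^0_{c+}$, and the argument above then carries over with the one-sided truncation $X^k := X \wedge k$ and dominating sequence $(X - k)^+$. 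The main technical point is the enlargement of the reduction set inside $L^\infty_c$ together with bookkeeping for the two versions of the solidness convention; the order-convergence argument itself is a routine application of the definitions.
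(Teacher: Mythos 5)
Your proposal is correct and follows essentially the same route as the paper's own proof: apply the preceding lemma to get $\sigma(L^\infty_c,\mathcal{Y})$-closedness of $\mathcal{C}\cap L^\infty_c$, invoke Theorem~\ref{thm:weak:pSensi} on $\mathcal{X}=L^\infty_c$ to obtain a reduction set inside $\mathfrak{P}_c(\Omega)\cap\mathcal{Y}$, transfer membership via truncation and solidness, and finish with order closedness. The only differences are cosmetic elaborations (explicit verification of $X^k \order{c} X$, the separate bookkeeping for solidness in $L^0_{c+}$, and enlarging the reduction set before rather than after the lifting step), all of which are sound.
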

Note that, when $\mathcal{P}$ is of class (S), $sca_c$ always separates the points of $L^\infty_c$, see \cite[Proposition B.5]{LMS2022}.

\begin{proof} $\langle L^\infty_c , \mathcal{Y}\rangle$ is a dual pair.
    The previous lemma shows that $\mathcal{C} \cap L^{\infty}_{c}$ is $\sigma(L^{\infty}_c, \mathcal{Y})$-closed. According to Theorem \ref{thm:weak:pSensi}, there is $\mathcal{Q}\subseteq \mathcal{Y}\cap \mathfrak{P}_c(\Omega)$ such that $\mathcal{C}\cap L^\infty_c$ satisfies \eqref{eq:Psensitive:X} where $\mathcal{X}= L^\infty_c$. Let 
    \begin{equation*}
        X \in \bigcap_{Q \in \mathcal{Q}} j_{Q}^{-1} \circ j_{Q}(\mathcal{C}).
    \end{equation*}

For all $Q\in \mathcal{Q}$ there is $Y\in \mathcal{C}$ such that $j_Q(Y)=j_Q(X)$. 
As $\mathcal{C}$ is solid, for $n\in \N$, we have $-n\vee Y\wedge n\in \mathcal{C}\cap L^\infty_c$, which in particular implies $j_Q(-n\vee X\wedge n)=j_Q(-n\vee Y\wedge n) \in j_{Q}(\mathcal{C})$. As $Q\in \mathcal{Q}$ was arbitrary, and by \eqref{eq:Psensitive:X}, we have that $-n\vee X\wedge n\in \mathcal{C}$ for all $n\in \N$. Finally, order closedness of $\mathcal{C}$ implies $X\in \mathcal{C}$.   
\end{proof}

The next example, which can originally be found in \cite[Example 3.7]{MMS2018}, gives us an example of a convex, solid, and sequentially order closed set which is not $\mathcal{P}$-sensitive (and thus not order closed). Moreover, $\mathcal P$ will be of class (S) and $L^0_c$ will be Dedekind complete. However, the example is based on assuming the continuum hypothesis, i.e., there is no set $\mathfrak{X}$ whose cardinality satisfies $\lvert \N \rvert  = \aleph_{0} < \lvert \mathfrak{X} \rvert < 2^{\aleph_{0}} = \lvert \R \rvert$.

\begin{exmp} \label{exmp:pSensi:oClsd}
    Let $(\Omega, \mathcal{F}) = ([0, 1], \mathbb{P}([0, 1]))$, where $\mathbb{P}([0, 1])$ denotes the power set of $[0, 1]$. Further, let $\mathcal{P} := \{\delta_{\omega} \colon \omega \in [0, 1]\}$ be the set of all Dirac measures. Every probability measure in $\mathcal P$ is supported (see Example~\ref{exmp:classS}), and $L^0_c$ is easily seen to be Dedekind complete. \\
    Consider the set
    \begin{equation*}
        \mathcal{D} := \{\mathbf{1}_{A} \colon \emptyset \neq A \subseteq [0, 1] \text{ is countable}\}
    \end{equation*}
    and let $\mathcal{C}$ be the solid hull of $\mathcal{D}$ in $L^0_{c+}$. $\mathcal{C}$ can be written as
    \begin{equation*}
        \mathcal{C} = \{X \in L^{0}_{c+} \colon \exists Y \in \mathcal{D} \  0 \leq X \leq Y\},
    \end{equation*}
    which also shows convexity of $\mathcal{C}$. Note that every $X\in \mathcal C$ is countably supported. Now let $(X_{n})_{n \in \N} \subseteq \mathcal{C}$ such that $X_{n} \overset{o}{\underset{c}{\longrightarrow}} X \in L^{0}_{c+}$. For each $X_{n} \in \mathcal{C}$ there exists a countable set $A_{n} \subseteq [0, 1]$ such that $0 \leq X_{n} \leq \mathbf{1}_{A_{n}}$. Set $A := \bigcup_{n \in \N} A_{n}$. $A$ is still countable and  $0 \leq X_{n} \leq \mathbf{1}_{A}$ for all $n \in \N$. Hence, $0 \leq X \leq \mathbf{1}_{A}$ and therefore $X \in \mathcal{C}$. Thus, $\mathcal{C}$ is sequentially order closed. \\
    Next we show that $\mathcal C$ is not order closed. To this end,  set $I := \{A \subseteq [0, 1] \text{ finite}\}$. For $\alpha, \beta \in I$ we let $\alpha \leq \beta$ if $\alpha \subseteq \beta$ and set $X_\alpha=\mathbf{1}_\alpha$. Then $(X_{\alpha})_{\alpha \in I}\subseteq \mathcal{C}$ converges in order to $1$, but $ 1  \not\in \mathcal{C}$. Hence, $\mathcal{C}$ is not order closed. Indeed, so far we have reproduced a well-known text book example of a set which is sequentially order closed, but not order closed, and we have not yet assumed the continuum hypothesis. \\
    In order to give an answer to the question whether $\mathcal{C}$ is $\mathcal{P}$-sensitive, we need more structure. From now on assume the continuum hypothesis. Banach and Kuratowski have shown that for any set $\Lambda$ with the same cardinality as $\R$ there is no measure $\mu$ on $(\Lambda, \mathbb{P}(\Lambda))$, where $\mathbb{P}(\Lambda)$ denotes the power set of $\Lambda$,  such that $\mu(\Lambda) = 1$ and $\mu(\{\omega\}) = 0$ for all $\omega \in \Lambda$, see for instance \cite[Theorem C.1]{D2002}. It follows that any probability measure $\mu$ on $(\Omega, \mathcal{F})$ must be a countable sum of weighted Dirac-measures, i.e., $\mu = \sum_{i = 1}^{\infty} a_{i} \delta_{\omega_{i}}$, where $\sum_{i = 1}^{\infty} a_{i} = 1$, $a_{i} \geq 0$, and $\omega_{i} \in \Omega$ for all $i \in \N$. Thus every probability measure has a countable support, and in particular $ca = ca_c = sca = sca_c$. \\
    Let $Q \in \mathfrak{P}_{c}(\Omega) = \mathfrak{P}(\Omega)$. $Q$ has a countable support $S(Q)$, and therefore $\mathbf{1}_{S(Q)} \in \mathcal{C}$. Then $j_{Q}(1) = j_{Q}(\mathbf{1}_{S(Q)}) \in j_Q(\mathcal{C})$. As $Q \in \mathfrak{P}_{c}(\Omega)$ was arbitrary, we have
    \begin{equation*}
        1 \in \bigcap_{Q \in \mathfrak{P}_{c}(\Omega)} j_{Q}^{-1} \circ j_{Q}(\mathcal{C}).
    \end{equation*}
    However, as before, $1 \not\in \mathcal{C}$. Hence, $\mathcal{C}$ is not $\mathcal{P}$-sensitive.
\end{exmp}

Example \ref{exmp:pSensi:oClsd} also implies that there is no proof of the statement that convexity, solidness, and sequential order closedness imply $\mathcal{P}$-sensitivity:

\begin{cor}
    Let $\mathcal{C} \subseteq L^{0}_{c+}$ be convex, solid, and sequentially order closed. Without further assumptions, there exists no proof that the assumed properties of $\mathcal{C}$ imply $\mathcal{P}$-sensitivity.
\end{cor}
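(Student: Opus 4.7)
The plan is to read Corollary~\ref{cor:pSensi:nImpl} as a meta-mathematical claim about (un)provability in ZFC and to establish it by appealing to the relative consistency of the continuum hypothesis, in combination with Example~\ref{exmp:pSensi:oClsd}.

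First, I would recall G\"odel's classical result that if ZFC is consistent, then ZFC $+$ CH is also consistent. Hence any theorem derivable from ZFC is in particular a theorem of ZFC $+$ CH, so an implication provable without further assumptions is provable under the additional assumption of CH.

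Next, I would invoke Example~\ref{exmp:pSensi:oClsd}. Working in ZFC $+$ CH, that example exhibits a measurable space $(\Omega,\mathcal{F})$, a set $\mathcal{P}\subset \mathfrak{P}(\Omega)$, and a subset $\mathcal{C}\subset L^0_{c+}$ which is convex, solid, and sequentially order closed, yet fails to be $\mathcal{P}$-sensitive. In particular, the implication
\begin{equation*}
\text{convex} \ \wedge\ \text{solid}\ \wedge\ \text{sequentially order closed} \ \Longrightarrow\ \mathcal{P}\text{-sensitive}
\end{equation*}
is false in some model of ZFC $+$ CH.

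Combining these two observations yields the corollary: suppose, for contradiction, that there were a ZFC-proof that convexity, solidness, and sequential order closedness of $\mathcal{C}\subset L^0_{c+}$ imply $\mathcal{P}$-sensitivity. Then this implication would also hold in every model of ZFC $+$ CH, contradicting the counterexample provided by Example~\ref{exmp:pSensi:oClsd}. Hence no such ZFC-proof can exist, which is precisely the statement of Corollary~\ref{cor:pSensi:nImpl}.

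The main delicate point is not any calculation but the conceptual one of making the meta-logical status of the statement precise: the corollary is not saying that the implication is false (it may or may not hold in particular models of ZFC, e.g.\ under $\neg$CH), but rather that no proof within ZFC alone can decide it affirmatively. The consistency of ZFC $+$ CH plus the explicit CH-based counterexample in Example~\ref{exmp:pSensi:oClsd} together constitute a complete justification, so no further technical work is required beyond pointing to these two ingredients.
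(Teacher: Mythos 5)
Your proposal is correct and is essentially the same argument the paper gives: the paper's proof likewise cites Example~\ref{exmp:pSensi:oClsd} together with the consistency of CH with ZFC, and your write-up merely makes the meta-logical reasoning (G\"odel's relative consistency result and the contradiction with a hypothetical ZFC-proof) explicit. No gaps.
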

\begin{proof}
    This follows from Example~\ref{exmp:pSensi:oClsd} and the fact that the continuum hypothesis is consistent with the standard mathematical axioms (ZFC).
\end{proof}

\section{Bipolar Theorems in $L^0_{c+}$} \label{sec:bip:thm}

Based on Theorem~\ref{thm:bipolar:ext} we will now lift Theorem~\ref{thm:KS} to $L^0_{c+}$.

\begin{thm} \label{thm:l0:bipolar:ext:ks}
    Suppose that $\mathcal{C} \subseteq L^0_{c+}$ and $\mathcal{Q}\subseteq \mathfrak{P}_c(\Omega)$ are non-empty . Let 
    \begin{equation*}
        \mathcal{C}^{\circ}_\mathcal{Q} := \{(Q, Z) \in \mathcal{Q} \times L^{\infty}_{c+} \colon \forall X \in \mathcal{C} \ E_{Q}[Z X] \leq 1\}
    \end{equation*}
    and 
    \begin{equation*}
        \mathcal{C}^{\circ \circ}_\mathcal{Q} := \{X \in L^{0}_{c+} \colon \forall (Q, Z) \in \mathcal{C}^{\circ}_\mathcal{Q} \ E_{Q}[Z X] \leq 1\}.
    \end{equation*}
    Then $\mathcal{C}^{\circ \circ}_\mathcal{Q}$ is convex, solid, sequentially order closed, and $\mathcal P$-sensitive with reduction set $\mathcal{Q}$, and $\mathcal{C}\subseteq \mathcal{C}^{\circ \circ}_\mathcal{Q}$. Moreover, $\mathcal{C}^{\circ \circ}_\mathcal{Q}$ is the smallest such set in the sense that any set $\mathcal{D}\subseteq L^0_{c+}$ containing $\mathcal{C}$, which is also convex, solid, sequentially order closed, and $\mathcal P$-sensitive with reduction set $\mathcal{Q}$, satisfies $\mathcal{C}^{\circ \circ}_\mathcal{Q}\subseteq \mathcal{D}$.  In particular, $\mathcal{C}=\mathcal{C}^{\circ \circ}_\mathcal{Q}$ if and only if $\mathcal{C}$ is convex, solid, sequentially order closed, and $\mathcal P$-sensitive with reduction set $\mathcal{Q}$.
\end{thm}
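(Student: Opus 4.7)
The plan is to verify the claimed properties of $\mathcal{C}^{\circ\circ}_\mathcal{Q}$ first, and then establish minimality by lifting Theorem~\ref{thm:KS} pointwise over $\mathcal{Q}$ via Theorem~\ref{thm:bipolar:ext}. Convexity and solidness of $\mathcal{C}^{\circ\circ}_\mathcal{Q}$ are immediate from linearity of $E_Q[Z \cdot]$ and non-negativity of $Z$. $\mathcal{P}$-sensitivity with reduction set $\mathcal{Q}$ is a direct instance of Corollary~\ref{cor:l0:bipolar}, applied to the family of functionals $(Q,Z)\mapsto E_Q[Z\cdot]$ indexed by $\mathcal{C}^\circ_\mathcal{Q}$. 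For sequential order closedness, let $(X_n)_{n\in\N}\subset \mathcal{C}^{\circ\circ}_\mathcal{Q}$ with $X_n\order{c}X$ in $L^0_{c+}$. By Lemma~\ref{lem:help1}(2), $j_Q(X_n)\order{Q}j_Q(X)$ in $L^0_Q$, and since $L^0_Q$ is super Dedekind complete this is equivalent to $Q$-a.s.\ convergence. For any $(Q,Z)\in \mathcal{C}^\circ_\mathcal{Q}$, Fatou's lemma then yields
\[
E_Q[ZX]\leq \liminf_{n\to\infty}E_Q[ZX_n]\leq 1,
\]
so $X\in \mathcal{C}^{\circ\circ}_\mathcal{Q}$. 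The inclusion $\mathcal{C}\subset \mathcal{C}^{\circ\circ}_\mathcal{Q}$ is direct from the definitions.

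For the minimality statement, let $\mathcal{D}\supset\mathcal{C}$ share the four properties. Fix $Q\in\mathcal{Q}$. By Lemma~\ref{lem:solid}, $j_Q(\mathcal{D})$ is solid in $L^0_{Q+}$, and it is convex as the image of a convex set. Moreover, Theorem~\ref{thm:clsd:seqO:allQ:locQ} (applied to $\mathcal{D}$, which is solid, sequentially order closed, and $\mathcal{P}$-sensitive with reduction set $\mathcal{Q}$) guarantees that $j_Q(\mathcal{D})$ is $Q$-closed in $L^0_Q$. Therefore, Theorem~\ref{thm:KS} applies and yields $j_Q(\mathcal{D})=j_Q(\mathcal{D})^{\circ\circ}$, where the polar is formed with respect to the dual cone $L^\infty_{Q+}$.

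Now pick $X\in \mathcal{C}^{\circ\circ}_\mathcal{Q}$ and $Q\in\mathcal{Q}$; we will show $j_Q(X)\in j_Q(\mathcal{D})$. Let $Z^Q\in j_Q(\mathcal{D})^\circ\subset L^\infty_{Q+}$ be arbitrary. Choose a representative $g$ of $Z^Q$ and, using $Q$-essential boundedness, replace it by its pointwise truncation $(g\wedge m)\vee 0$ for an appropriate $m>0$, so that this truncation is bounded in $[0,m]$ pointwise and still $Q$-a.s.\ equal to $g$. Setting $Z:=[(g\wedge m)\vee 0]_c\in L^\infty_{c+}$ gives $j_Q(Z)=Z^Q$. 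For every $Y\in\mathcal{C}\subset\mathcal{D}$, we have $j_Q(Y)\in j_Q(\mathcal{D})$ and hence $E_Q[ZY]=E_Q[Z^Q j_Q(Y)]\leq 1$, so $(Q,Z)\in \mathcal{C}^\circ_\mathcal{Q}$. From $X\in \mathcal{C}^{\circ\circ}_\mathcal{Q}$ we then obtain $E_Q[Z^Q j_Q(X)]=E_Q[ZX]\leq 1$, proving $j_Q(X)\in j_Q(\mathcal{D})^{\circ\circ}=j_Q(\mathcal{D})$. Since $Q\in\mathcal{Q}$ was arbitrary and $\mathcal{Q}$ is a reduction set for $\mathcal{D}$, we conclude $X\in\mathcal{D}$, hence $\mathcal{C}^{\circ\circ}_\mathcal{Q}\subset\mathcal{D}$.

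The final ``in particular'' assertion then follows in both directions: if $\mathcal{C}$ itself is convex, solid, sequentially order closed and $\mathcal{P}$-sensitive with reduction set $\mathcal{Q}$, applying minimality with $\mathcal{D}=\mathcal{C}$ gives $\mathcal{C}^{\circ\circ}_\mathcal{Q}\subset\mathcal{C}$, which combined with $\mathcal{C}\subset\mathcal{C}^{\circ\circ}_\mathcal{Q}$ yields equality; conversely, if $\mathcal{C}=\mathcal{C}^{\circ\circ}_\mathcal{Q}$, $\mathcal{C}$ inherits the four properties already verified for $\mathcal{C}^{\circ\circ}_\mathcal{Q}$. The main delicate point of the argument is the lifting step: one must produce a $Z\in L^\infty_{c+}$ whose image under $j_Q$ equals the prescribed $Z^Q\in L^\infty_{Q+}$, which is achieved by a pointwise truncation of any $Q$-essentially bounded representative. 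Everything else reduces to routine verifications together with the structural results of Section~\ref{sec:clsd} and Section~\ref{sec:lifting}.
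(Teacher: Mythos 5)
Your proof is correct and follows essentially the same route as the paper: both verify the four properties of $\mathcal{C}^{\circ\circ}_\mathcal{Q}$ via Corollary~\ref{cor:l0:bipolar} together with the closedness results of Section~\ref{sec:clsd}, and both obtain the reverse inclusion by applying Theorem~\ref{thm:KS} to the images $j_Q(\cdot)$ for each $Q\in\mathcal{Q}$ and lifting dual elements $Z^Q\in L^\infty_{Q+}$ back to $L^\infty_{c+}$. The only organizational difference is that you prove minimality directly for an arbitrary $\mathcal{D}$ and deduce the equivalence $\mathcal{C}=\mathcal{C}^{\circ\circ}_\mathcal{Q}$ from it, whereas the paper first proves the equivalence (via Theorem~\ref{thm:bipolar:ext}) and then gets minimality from antitonicity of polars; your explicit truncation argument producing a preimage in $j_Q^{-1}(Z^Q)\cap L^\infty_{c+}$ makes precise a step the paper leaves implicit.
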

\begin{proof}
    Clearly, $\mathcal{C}\subseteq \mathcal{C}^{\circ \circ}_\mathcal{Q}$, and $\mathcal P$-sensitivity with reduction set $\mathcal{Q}$, convexity, solidness, and sequentially order closedness of $\mathcal{C}^{\circ \circ}_\mathcal{Q}$ follow from Corollary~\ref{cor:l0:bipolar} and Theorem~\ref{thm:clsd:seqO:allQ:locQ}.
    
    Now suppose that $\mathcal{C}$ is $\mathcal P$-sensitive with reduction set $\mathcal{Q}$, convex, solid, and sequentially order closed. Consider any $Q \in \mathcal{Q}$. $j_Q(\mathcal{C})$ is clearly convex in $L^{0}_{Q+}$ and also solid by Lemma~\ref{lem:solid}.  Moreover, by  Theorem~\ref{thm:clsd:seqO:allQ:locQ}, $j_Q(\mathcal{C})$ is $Q$-closed. Hence, according to Theorem~\ref{thm:KS}, the requirement of Theorem~\ref{thm:bipolar:ext} is satisfied, and we have
    \begin{equation*}
        \mathcal{C} = \{X \in L^0_{c+} \colon \forall Q \in \mathcal{Q} \, \forall Z_Q \in j_Q(\mathcal{C})^\circ \ E_Q[Z_Q j_Q(X)] \leq 1\},
    \end{equation*}
    where $j_Q(\mathcal{C})^\circ\subseteq L^\infty_{Q+}$ is the polar of $j_Q(\mathcal C)$ given in Theorem~\ref{thm:KS}. Fix $Q\in \mathcal{Q}$ and $Z_Q\in j_Q(\mathcal{C})^\circ$. Then for any $ Z\in j_Q^{-1}(Z_Q)\cap L^\infty_{c+}$ and all $X\in L^0_{c+}$  we have $E_Q[Z_Qj_Q(X)]=E_Q[ZX]$. In particular, $E_Q[ZX]= E_Q[Z_Qj_Q(X)]\leq 1$ for all $X\in \mathcal{C}$ because $j_Q(X)\in j_Q(\mathcal{C})$ and $Z_Q\in j_Q(\mathcal{C})^\circ$. Hence, $(Q,Z)\in \mathcal{C}^\circ_\mathcal{Q}$. Therefore, if $X\in L^0_{c+}$ satisfies
    \begin{equation*}
        \forall (Q, Z) \in \mathcal{C}^{\circ}_\mathcal{Q} \quad E_{Q}[Z X] \leq 1,
    \end{equation*}
    then $X$ satisfies
    \begin{equation*}
        \forall Z_Q \in j_Q(\mathcal{C})^\circ \quad E_Q[Z_Qj_Q(X)]\leq 1,
    \end{equation*}
    and thus  $\mathcal{C}_{\mathcal{Q}}^{\circ\circ}\subseteq \mathcal{C}$. Since always $\mathcal{C}\subseteq \mathcal{C}_{\mathcal{Q}}^{\circ\circ}$ we have $\mathcal{C}= \mathcal{C}_{\mathcal{Q}}^{\circ\circ}$.

    Minimality of $\mathcal{C}^{\circ\circ}_\mathcal{Q}$ follows by standard arguments. Indeed, suppose that $\mathcal{D}$ is $\mathcal P$-sensitive with reduction set $\mathcal{Q}$, convex, solid, and sequentially order closed, and that $\mathcal{C}\subseteq \mathcal{D}$.  The latter implies $\mathcal{D}^\circ_\mathcal{Q} \subseteq \mathcal{C}^\circ_\mathcal{Q}$, and therefore $\mathcal{C}^{\circ\circ}_\mathcal{Q}\subseteq \mathcal{D}^{\circ\circ}_\mathcal{Q}$. We have shown above that $\mathcal{D}^{\circ\circ}_\mathcal{Q}=\mathcal{D}$, that is $\mathcal{C}^{\circ\circ}_\mathcal{Q}\subseteq \mathcal{D}$.     
\end{proof}

For the generic case $\mathcal{Q}=\mathfrak{P}_c(\Omega)$ we write
\begin{equation} \label{eq:c:circ}
    \mathcal{C}^{\circ} := \mathcal{C}^{\circ}_{\mathfrak{P}_c(\Omega)} \quad \text{and} \quad \mathcal{C}^{\circ\circ} := \mathcal{C}^{\circ\circ}_{\mathfrak{P}_c(\Omega)}.
\end{equation}
Note that $\mathcal{C}^{\circ}_{\mathcal{Q}} \subseteq \mathcal{C}^{\circ}$ for any non-empty $\mathcal{Q} \subseteq \mathfrak{P}_c(\Omega)$ and thus $\mathcal{C}^{\circ\circ} \subseteq \mathcal{C}^{\circ\circ}_{\mathcal{Q}}$. 

\begin{cor}\label{cor:them:KS:ext} Suppose that  $\mathcal{C}\subseteq L^0_{c+}$ is non-empty. $\mathcal{C}^{\circ\circ}$ is the smallest convex, solid, sequentially order closed, $\mathcal P$-sensitive subset of $L^0_{c+}$ containing $\mathcal C$. $\mathcal{C}=\mathcal{C}^{\circ \circ}$ if and only if $\mathcal{C}$ is convex, solid, sequentially order closed, and $\mathcal P$-sensitive. If, moreover, $\mathcal{Q}\subseteq \mathfrak{P}_c(\Omega)$ is a reduction set for $\mathcal{C}$, then $\mathcal{C}=\mathcal{C}^{\circ\circ} = \mathcal{C}^{\circ\circ}_\mathcal{Q}$.
\end{cor}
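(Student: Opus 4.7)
The plan is to deduce this corollary as an essentially immediate consequence of Theorem~\ref{thm:l0:bipolar:ext:ks} applied with the choice $\mathcal{Q}=\mathfrak{P}_c(\Omega)$, using that by definition of $\mathcal{P}$-sensitivity, a set $\mathcal{C}\subset L^0_c$ is $\mathcal{P}$-sensitive if and only if $\mathfrak{P}_c(\Omega)$ is a reduction set for $\mathcal{C}$. Together with the notational convention in \eqref{eq:c:circ}, this identifies $\mathcal{C}^\circ$ with $\mathcal{C}^\circ_{\mathfrak{P}_c(\Omega)}$ and $\mathcal{C}^{\circ\circ}$ with $\mathcal{C}^{\circ\circ}_{\mathfrak{P}_c(\Omega)}$.

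For the first assertion, I would simply instantiate Theorem~\ref{thm:l0:bipolar:ext:ks} at $\mathcal{Q}=\mathfrak{P}_c(\Omega)$. The theorem then tells us that $\mathcal{C}^{\circ\circ}$ is convex, solid, sequentially order closed, $\mathcal{P}$-sensitive with reduction set $\mathfrak{P}_c(\Omega)$ (i.e.\ $\mathcal{P}$-sensitive), contains $\mathcal{C}$, and is minimal with these properties. For the characterisation, the same theorem gives $\mathcal{C}=\mathcal{C}^{\circ\circ}$ if and only if $\mathcal{C}$ has all four properties, since $\mathcal{P}$-sensitivity is precisely the condition that $\mathfrak{P}_c(\Omega)$ be a reduction set.

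The last assertion requires one extra line of bookkeeping. Suppose $\mathcal{C}$ is convex, solid, sequentially order closed and admits some $\mathcal{Q}\subset\mathfrak{P}_c(\Omega)$ as a reduction set. By Lemma~\ref{lem:pSensi:redSet}(2), $\mathcal{C}$ is automatically $\mathcal{P}$-sensitive. Applying Theorem~\ref{thm:l0:bipolar:ext:ks} both at this $\mathcal{Q}$ and at $\mathfrak{P}_c(\Omega)$ yields
\begin{equation*}
\mathcal{C}=\mathcal{C}^{\circ\circ}_\mathcal{Q}\qquad\text{and}\qquad \mathcal{C}=\mathcal{C}^{\circ\circ}.
\end{equation*}
Alternatively, once $\mathcal{C}=\mathcal{C}^{\circ\circ}_\mathcal{Q}$ is established, the chain $\mathcal{C}\subset\mathcal{C}^{\circ\circ}\subset\mathcal{C}^{\circ\circ}_\mathcal{Q}=\mathcal{C}$, where the middle inclusion is the observation $\mathcal{C}^{\circ\circ}\subset\mathcal{C}^{\circ\circ}_\mathcal{Q}$ recorded directly after \eqref{eq:c:circ}, closes the argument.

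There is no genuine obstacle: the whole content of the corollary is hidden in Theorem~\ref{thm:l0:bipolar:ext:ks}, and the only thing to check is that $\mathcal{P}$-sensitivity coincides with the existence of $\mathfrak{P}_c(\Omega)$ as a reduction set, which is the definition. The proof therefore amounts to spelling out this identification and invoking the theorem twice.
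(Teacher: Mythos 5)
Your proposal is correct and follows essentially the same route as the paper, whose proof is the one-line observation that the corollary follows from Theorem~\ref{thm:l0:bipolar:ext:ks} because $\mathfrak{P}_c(\Omega)$ is (the maximal) reduction set for any $\mathcal{P}$-sensitive set. Your extra bookkeeping for the final assertion via Lemma~\ref{lem:pSensi:redSet}(2) and the inclusion $\mathcal{C}^{\circ\circ}\subset\mathcal{C}^{\circ\circ}_{\mathcal{Q}}$ just spells out what the paper leaves implicit.
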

\begin{proof}
This follows from Theorem~\ref{thm:l0:bipolar:ext:ks} since $\mathfrak{P}_{c}(\Omega)$ is a (in fact the maximal) reduction set for any $\mathcal P$-sensitive set.
\end{proof}

\begin{cor} Let $\mathcal{C}\subseteq L^0_{c+}$ be non-empty.
    If the non-empty set $\mathcal Q\subseteq sca_c$ has disjoint supports, then
    \begin{equation*}
        \mathcal{C}^{\circ\circ}_\mathcal{Q} = \mathcal{C}^{\ast \ast}_{\mathcal{Q}} := \{X \in L^{0}_{c+} \colon \forall Z \in \mathcal{C}^{\ast}_{\mathcal Q} \ \sup_{Q \in \mathcal{Q}} E_{Q}[Z X] \leq 1\}
    \end{equation*}
    where
    \begin{equation*}
        \mathcal{C}^{\ast}_{\mathcal Q} := \{Z \in L^\infty_{c+} \colon \forall X \in \mathcal{C} \ \sup_{Q \in \mathcal{Q}} E_Q[Z X] \leq 1\}.
    \end{equation*}
\end{cor}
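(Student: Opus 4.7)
The plan is to establish the claimed equality by a double inclusion, only one of which requires the disjoint-support hypothesis.

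For the easy inclusion $\mathcal{C}^{\circ\circ}_\mathcal{Q}\subset \mathcal{C}^{\ast\ast}_\mathcal{Q}$, I would observe that any $Z\in \mathcal{C}^{\ast}_\mathcal{Q}$ produces, for each $Q\in\mathcal{Q}$, a pair $(Q,Z)\in \mathcal{C}^{\circ}_\mathcal{Q}$: indeed, for every $X'\in\mathcal{C}$ one has $E_Q[ZX']\leq \sup_{Q'\in \mathcal{Q}}E_{Q'}[ZX']\leq 1$. Hence, for $X\in \mathcal{C}^{\circ\circ}_\mathcal{Q}$ and any such $Z$, $E_Q[ZX]\leq 1$ holds for every $Q\in \mathcal{Q}$, which on taking the supremum gives $X\in \mathcal{C}^{\ast\ast}_\mathcal{Q}$. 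No disjointness is used here.

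The substantive inclusion $\mathcal{C}^{\ast\ast}_\mathcal{Q}\subset \mathcal{C}^{\circ\circ}_\mathcal{Q}$ is proved by a localization trick: given $X\in \mathcal{C}^{\ast\ast}_\mathcal{Q}$ and $(Q_0,Z)\in \mathcal{C}^{\circ}_\mathcal{Q}$, I would consider $\tilde Z := Z\,\mathbf{1}_{S(Q_0)}\in L^\infty_{c+}$ and verify that $\tilde Z \in \mathcal{C}^{\ast}_\mathcal{Q}$. Fix $X'\in\mathcal{C}$. Since $Q_0(S(Q_0))=1$, one has $E_{Q_0}[\tilde Z X']=E_{Q_0}[ZX']\leq 1$ by virtue of $(Q_0,Z)\in \mathcal{C}^{\circ}_\mathcal{Q}$. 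For any other $Q\in \mathcal{Q}$, disjointness of supports yields $Q(S(Q_0)\cap S(Q))\leq c(S(Q_0)\cap S(Q))=0$, and combined with $Q(S(Q)^c)=0$ this forces $Q(S(Q_0))=0$, so $\tilde Z=0$ $Q$-almost surely and $E_Q[\tilde Z X']=0\leq 1$. Hence $\tilde Z\in \mathcal{C}^{\ast}_\mathcal{Q}$, and the hypothesis on $X$ gives $E_{Q_0}[ZX]=E_{Q_0}[\tilde Z X]\leq \sup_{Q\in \mathcal{Q}}E_Q[\tilde Z X]\leq 1$; since $(Q_0,Z)\in \mathcal{C}^\circ_\mathcal{Q}$ was arbitrary, $X\in \mathcal{C}^{\circ\circ}_\mathcal{Q}$.

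The only subtle step, and the crux of the argument, is the localization $Z\mapsto Z\,\mathbf{1}_{S(Q_0)}$: it converts a polarity condition enforced only under $Q_0$ into a uniform polarity condition over all of $\mathcal{Q}$, with disjointness of supports being exactly what kills the integrals under the remaining $Q$'s. Everything else amounts to unwinding definitions, so I do not anticipate additional technical obstacles.
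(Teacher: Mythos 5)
Your proof is correct and the substantive direction is exactly the paper's argument: the localization $Z\mapsto Z\,\mathbf{1}_{S(Q_0)}$, with disjointness of supports forcing $E_Q[Z\mathbf{1}_{S(Q_0)}X']=0$ for $Q\neq Q_0$, is precisely how the paper shows $\mathcal{C}^{\ast\ast}_{\mathcal{Q}}\subset\mathcal{C}^{\circ\circ}_{\mathcal{Q}}$. The only (harmless) deviation is in the easy inclusion, which you establish directly from $\mathcal{Q}\times\mathcal{C}^{\ast}_{\mathcal{Q}}\subset\mathcal{C}^{\circ}_{\mathcal{Q}}$, whereas the paper routes it through the minimality statement of Theorem~\ref{thm:l0:bipolar:ext:ks} after checking that $\mathcal{C}^{\ast\ast}_{\mathcal{Q}}$ is convex, solid, sequentially order closed, and $\mathcal{P}$-sensitive; your version is more elementary and equally valid.
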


\begin{proof}
    As $$\mathcal{C}^{\ast\ast}_{\mathcal Q} = \{X \in L^{0}_{c+} \colon \forall Z \in \mathcal{C}^{\ast}_{\mathcal Q} \, \forall Q \in \mathcal{Q} \ E_{Q}[Z X] \leq 1\},$$ Corollary~\ref{cor:l0:bipolar} and Theorem~\ref{thm:clsd:seqO:allQ:locQ} show that $\mathcal{C}^{\ast\ast}_{\mathcal Q}$ is  $\mathcal P$-sensitive with reduction set $\mathcal{Q}$, convex, solid, and sequentially order closed. Moreover, $\mathcal{C}^{\ast\ast}_{\mathcal Q}$ contains $\mathcal C$. Therefore, by Theorem~\ref{thm:l0:bipolar:ext:ks} $\mathcal{C}^{\circ\circ}_{\mathcal Q}\subseteq \mathcal{C}^{\ast\ast}_{\mathcal Q}$. It remains to show that $\mathcal{C}^{\ast\ast}_{\mathcal Q} \subseteq \mathcal{C}^{\circ \circ}_{\mathcal{Q}}$. To this end, let $X\in \mathcal{C}^{\ast\ast}_{\mathcal Q}$. For any  $(Q,Z)\in\mathcal{C}^{\circ}_{\mathcal Q}$ we have $Z\mathbf{1}_{S(Q)}\in \mathcal{C}^{\ast}_{\mathcal Q}$. Indeed, by disjointness of the supports, we obtain
    \begin{equation*}
        \sup_{\tilde Q\in \mathcal{Q}} E_{\tilde Q}[Z\mathbf{1}_{S(Q)}Y] = E_{Q}[Z\mathbf{1}_{S(Q)}Y] = E_{Q}[ZY] \leq 1
    \end{equation*}
    for all $Y\in \mathcal{C}$. Hence, $Z\mathbf{1}_{S(Q)}\in \mathcal{C}^{\ast}_{\mathcal{Q}}$ and thus
    \begin{equation*}
        E_Q[Z X] = \sup_{\tilde Q\in \mathcal{Q}} E_{\tilde Q}[Z\mathbf{1}_{S(Q)}X] \leq 1.
    \end{equation*}
    As $(Q,Z)\in \mathcal{C}^{\circ}_{\mathcal Q}$ was arbitrary, this implies $X\in  \mathcal{C}^{\circ \circ}_{\mathcal{Q}}$.
\end{proof}

Analogously to the proof of Theorem~\ref{thm:l0:bipolar:ext:ks} we can obtain a lifting of Theorem~\ref{thm:BS}, or we simply conclude it from Theorem~\ref{thm:l0:bipolar:ext:ks}: 

\begin{thm} \label{thm:l0:bipolar:ext}
    Suppose that $\mathcal{C} \subseteq L^0_{c+}$ and $\mathcal{Q}\subseteq \mathfrak{P}_{c}(\Omega) $ are non-empty. Let
    \begin{equation*}
        \mathcal{C}^{\diamond}_\mathcal{Q} := \{(Q, Z) \in \mathcal{Q} \times L^{0}_{c+} \colon \forall X \in \mathcal{C} \ E_{Q}[Z X] \leq 1\}
    \end{equation*}
    and 
    \begin{equation*}
         \mathcal{C}^{\diamond \diamond}_\mathcal{Q} := \{X \in L^{0}_{c+} \colon \forall (Q, Z) \in \mathcal{C}^{\diamond}_\mathcal{Q} \ E_{Q}[Z X] \leq 1\}.
    \end{equation*}
    Then $\mathcal{C}^{\diamond \diamond}_\mathcal{Q}=\mathcal{C}^{\circ \circ}_\mathcal{Q}$ where $\mathcal{C}^{\circ\circ}_\mathcal{Q}$ is given in Theorem~\ref{thm:l0:bipolar:ext:ks}.
\end{thm}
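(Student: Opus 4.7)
\medskip

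The plan is to prove the two inclusions separately, using truncation as the bridge between the $L^\infty_{c+}$-valued dual elements in $\mathcal{C}^\circ_\mathcal{Q}$ and the $L^0_{c+}$-valued dual elements in $\mathcal{C}^\diamond_\mathcal{Q}$.

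\smallskip

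For the inclusion $\mathcal{C}^{\diamond\diamond}_\mathcal{Q} \subset \mathcal{C}^{\circ\circ}_\mathcal{Q}$, I would simply observe that $L^\infty_{c+} \subset L^0_{c+}$, so any $(Q,Z) \in \mathcal{C}^\circ_\mathcal{Q}$ trivially lies in $\mathcal{C}^\diamond_\mathcal{Q}$. Hence the defining inequality for membership in $\mathcal{C}^{\diamond\diamond}_\mathcal{Q}$ is a priori at least as restrictive as the one for $\mathcal{C}^{\circ\circ}_\mathcal{Q}$, and the inclusion follows at once.

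\smallskip

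The reverse inclusion $\mathcal{C}^{\circ\circ}_\mathcal{Q} \subset \mathcal{C}^{\diamond\diamond}_\mathcal{Q}$ is the substantive direction. Fix any $X \in \mathcal{C}^{\circ\circ}_\mathcal{Q}$ and any $(Q,Z) \in \mathcal{C}^\diamond_\mathcal{Q}$; I need to show $E_Q[ZX] \leq 1$. For each $n \in \N$ set $Z_n := Z \wedge n \in L^\infty_{c+}$. Since $0 \preccurlyeq Z_n \preccurlyeq Z$ and every $Y \in \mathcal{C}$ is non-negative, monotonicity of the expectation yields $E_Q[Z_n Y] \leq E_Q[Z Y] \leq 1$ for all $Y \in \mathcal{C}$, so $(Q, Z_n) \in \mathcal{C}^\circ_\mathcal{Q}$. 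Because $X \in \mathcal{C}^{\circ\circ}_\mathcal{Q}$, this gives $E_Q[Z_n X] \leq 1$ for every $n$. Since $0 \leq Z_n X \nearrow ZX$ pointwise (on a representative level), monotone convergence implies
\begin{equation*}
    E_Q[ZX] = \lim_{n\to\infty} E_Q[Z_n X] \leq 1,
\end{equation*}
so $X \in \mathcal{C}^{\diamond\diamond}_\mathcal{Q}$, as desired.

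\smallskip

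No step here is a serious obstacle; the only thing to be careful about is that the truncation $Z \wedge n$ is well-defined in $L^\infty_{c+}$ and that monotone convergence under $Q$ applies to representatives chosen consistently, but both are routine once we recall that $Q \ll \mathcal{P}$ makes $Q$ consistent with the $\mathcal{P}$-quasi-sure equivalence relation, so all the quantities $E_Q[Z_n X]$ and $E_Q[ZX]$ are unambiguously defined on equivalence classes.
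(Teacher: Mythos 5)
Your proof is correct, and the easy inclusion $\mathcal{C}^{\diamond\diamond}_\mathcal{Q} \subset \mathcal{C}^{\circ\circ}_\mathcal{Q}$ via $\mathcal{C}^{\circ}_\mathcal{Q} \subset \mathcal{C}^{\diamond}_\mathcal{Q}$ is exactly what the paper does. For the substantive inclusion $\mathcal{C}^{\circ\circ}_\mathcal{Q} \subset \mathcal{C}^{\diamond\diamond}_\mathcal{Q}$, however, you take a genuinely different and more elementary route. The paper does not argue pointwise at all: it invokes Corollary~\ref{cor:l0:bipolar} and Theorem~\ref{thm:clsd:seqO:allQ:locQ} to conclude that $\mathcal{C}^{\diamond\diamond}_\mathcal{Q}$ is convex, solid, sequentially order closed, and $\mathcal{P}$-sensitive with reduction set $\mathcal{Q}$, and contains $\mathcal{C}$; the minimality clause of Theorem~\ref{thm:l0:bipolar:ext:ks} then forces $\mathcal{C}^{\circ\circ}_\mathcal{Q} \subset \mathcal{C}^{\diamond\diamond}_\mathcal{Q}$. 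Your truncation argument ($Z_n := Z \wedge n$, observe $(Q,Z_n) \in \mathcal{C}^{\circ}_\mathcal{Q}$ by monotonicity since $\mathcal{C} \subset L^0_{c+}$, then pass to the limit by monotone convergence) bypasses all of that machinery and in particular does not rely on the minimality statement of Theorem~\ref{thm:l0:bipolar:ext:ks}; it is essentially the same device the authors mention in passing after Theorem~\ref{thm:KS} to identify the sets in \eqref{eq:BSrep} and \eqref{eq:KSrep}, except that you truncate the dual element rather than the primal one, so you do not even need solidness of $\mathcal{C}$. What the paper's approach buys is uniformity of method — everything is funneled through the characterization of $\mathcal{C}^{\circ\circ}_\mathcal{Q}$ as the smallest convex, solid, sequentially order closed, $\mathcal{P}$-sensitive superset — while your approach buys self-containedness and a shorter dependency chain. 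Your closing caveats about well-definedness of $E_Q[Z_nX]$ on equivalence classes and consistency of $Q$ with $\mathcal{P}$-q.s.\ equality are exactly the right ones, and both are indeed routine here.
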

\begin{proof}
    We have $\mathcal{C}\subseteq \mathcal{C}^{\diamond \diamond}_\mathcal{Q}\subseteq \mathcal{C}^{\circ\circ}_\mathcal{Q}$ since $\mathcal{C}^{\circ}_\mathcal{Q}\subseteq \mathcal{C}^{\diamond}_\mathcal{Q}$. According to Corollary~\ref{cor:l0:bipolar} and Theorem~\ref{thm:clsd:seqO:allQ:locQ}, $\mathcal{C}^{\diamond \diamond}_\mathcal{Q}$ is $\mathcal{P}$-sensitive with reduction set $\mathcal{Q}$, convex, solid, and sequentially order closed. Therefore, $\mathcal{C}^{\diamond \diamond}_\mathcal{Q}=\mathcal{C}^{\circ \circ}_\mathcal{Q}$  by Theorem~\ref{thm:l0:bipolar:ext:ks}.
\end{proof}

Note that the polar in Theorem~\ref{thm:l0:bipolar:ext} involves unbounded random variables. The advantage of the bipolar representation based on bounded random variables in Theorem~\ref{thm:l0:bipolar:ext:ks}, compared to Theorem~\ref{thm:l0:bipolar:ext}, is that it implies a representation over finite measures: 

\begin{cor} \label{cor:l0:bipolar:meas}
    Suppose that $\mathcal{C}\subseteq L^0_{c+}$ is non-empty. Let
    \begin{equation*}
        \mathcal{C}^{\circ\circ}_{ca} := \bigg\{X \in L^{0}_{c+} \colon \forall \mu \in \mathcal{C}^\circ_{ca} \ \int X d\mu \leq 1\bigg\}
    \end{equation*}
    where
    \begin{equation*}
        \mathcal{C}^\circ_{ca} := \bigg\{\mu \in ca_{c+} \colon \forall X \in \mathcal{C} \ \int X d\mu \leq 1\bigg\}.
    \end{equation*}
    Then $\mathcal{C}^{\circ\circ}_{ca}=\mathcal{C}^{\circ\circ}$ where $\mathcal{C}^{\circ\circ}$ is defined in \eqref{eq:c:circ}. In particular, $\mathcal{C}=\mathcal{C}^{\circ\circ}_{ca}$ if and only if $\mathcal{C}$ is convex, solid, sequentially order closed, and $\mathcal P$-sensitive.

    Furthermore, if $\mathcal{C}$ is convex, solid, sequentially order closed, and $\mathcal P$-sensitive with reduction set $\mathcal{Q} \subseteq sca_{c}$, then
    \begin{equation*}
        \mathcal{C}=\mathcal{C}^{\circ\circ} = \mathcal{C}^{\circ\circ}_{sca} := \bigg\{X \in L^{0}_{c+} \colon \forall \mu \in \mathcal{C}^\circ_{sca} \ \int X d\mu \leq 1\bigg\},
    \end{equation*}
    where
    \begin{equation*}
       \mathcal{C}^\circ_{sca} := \bigg\{\mu \in sca_{c+} \colon \forall X \in \mathcal{C} \ \int X d\mu \leq 1\bigg\}.
    \end{equation*}
    Both $\mathcal{C}^\circ_{ca}$ and $\mathcal{C}^\circ_{sca}$ are convex, solid, and $\sigma(ca_c,L^\infty_c)$-closed or $\sigma(sca_c,L^\infty_c)$-closed, respectively. Here solid means that $\mu \in \mathcal{C}^\circ_{ca}$ (resp.\ $\mu \in \mathcal{C}^\circ_{sca}$) and $\nu \in ca_{c+}$ (resp.\ $\nu \in sca_{c+}$) such that $\nu[A]\leq \mu[A]$ for all $A \in \mathcal{F}$ imply $\nu \in \mathcal{C}^\circ_{ca}$ (resp.\ $\nu\in \mathcal{C}^\circ_{sca}$)
\end{cor}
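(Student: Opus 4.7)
The plan is to reduce the two assertions about $\mathcal{C}^{\circ\circ}_{ca}$ and $\mathcal{C}^{\circ\circ}_{sca}$ to Theorem~\ref{thm:l0:bipolar:ext:ks} and Corollary~\ref{cor:them:KS:ext} via the following natural correspondence between dual parameters: to each pair $(Q, Z) \in \mathfrak{P}_c(\Omega) \times L^\infty_{c+}$ associate the finite measure $\mu := Z \cdot Q \in ca_{c+}$ defined by $\mu(A) := E_Q[Z\chi_A]$; conversely, to each nonzero $\mu \in ca_{c+}$ associate the pair $(Q, Z) := (\mu/\mu(\Omega),\, \mu(\Omega))$, where the second entry is a positive constant in $L^\infty_{c+}$. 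This correspondence preserves the duality pairing, since $E_Q[ZX] = \int X\, d\mu$ for every $X \in L^0_{c+}$, and hence identifies the constraints coming from $\mathcal{C}^\circ$ with those coming from $\mathcal{C}^\circ_{ca}$ (up to the trivial constraint $\mu = 0$).

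Exploiting this, the equality $\mathcal{C}^{\circ\circ}_{ca} = \mathcal{C}^{\circ\circ}$ follows directly, so that the equivalence $\mathcal{C} = \mathcal{C}^{\circ\circ}_{ca}$ iff $\mathcal{C}$ is convex, solid, sequentially order closed, and $\mathcal{P}$-sensitive is immediate from Corollary~\ref{cor:them:KS:ext}. For the $sca$-statement, I would start from a reduction set $\mathcal{Q} \subset sca_c$ and invoke Theorem~\ref{thm:l0:bipolar:ext:ks} to write $\mathcal{C} = \mathcal{C}^{\circ\circ}_\mathcal{Q}$. Since $\mathcal{C}\subset \mathcal{C}^{\circ\circ}_{sca}$ is immediate, it suffices to show the reverse inclusion, and this is where the \emph{main technical step} enters: I need to verify that for $Q\in \mathcal{Q}\subset sca_c$ and $Z \in L^\infty_{c+}$, the measure $\mu = Z\cdot Q$ is itself supported, i.e.\ lies in $sca_{c+}$. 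With $f$ a representative of $Z$, the candidate $S(\mu) := S(Q)\cap \{f > 0\}$ satisfies $\mu(S(\mu)^c) = 0$ by a direct computation, and if $\mu(N\cap S(\mu)) = 0$ then integrating the strictly positive density $f$ on $S(\mu)$ forces $Q(N\cap S(\mu))= 0$; since $S(\mu)\subset S(Q)$, the supportedness of $Q$ then makes $N\cap S(\mu)$ a $\mathcal{P}$-polar set. Consequently every constraint from $\mathcal{C}^\circ_\mathcal{Q}$ is reflected by one from $\mathcal{C}^\circ_{sca}$, closing the argument.

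The remaining structural properties of $\mathcal{C}^\circ_{ca}$ and $\mathcal{C}^\circ_{sca}$ I expect to be essentially automatic. Convexity is linear, solidness comes from $\nu(A)\leq \mu(A)$ for all $A\in \mathcal{F}$ combined with the non-negativity of $X$ giving $\int X\, d\nu \leq \int X\, d\mu$, and weak-$\ast$-closedness follows from the identity
\[
    \{\mu \in ca_{c+}\mid \textstyle \int X\, d\mu \leq 1\} \;=\; \bigcap_{n\in \N} \{\mu \in ca_{c+}\mid \textstyle \int (X\wedge n)\, d\mu \leq 1\},
\]
valid by monotone convergence, together with the observation that $X\wedge n \in L^\infty_c$ makes each halfspace on the right $\sigma(ca_c, L^\infty_c)$-closed; weak-$\ast$-closedness of the positive cone $ca_{c+}$ completes the argument, and the same reasoning applies verbatim to the $\sigma(sca_c, L^\infty_c)$-closedness of $\mathcal{C}^\circ_{sca}$.
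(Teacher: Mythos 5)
Your proposal is correct and its skeleton coincides with the paper's: identify dual pairs $(Q,Z)$ with the measures $\mu=Z\cdot Q$ and reduce everything to Theorem~\ref{thm:l0:bipolar:ext:ks} and Corollary~\ref{cor:them:KS:ext}; the convexity, solidity and weak closedness arguments for the polars are also the ones the paper uses. There are two local deviations worth recording. First, for $\mathcal{C}^{\circ\circ}_{ca}=\mathcal{C}^{\circ\circ}$ you run the correspondence in both directions, sending a nonzero $\mu\in ca_{c+}$ to $(\mu/\mu(\Omega),\mu(\Omega))$ with a \emph{constant} test function; this gives both inclusions directly, whereas the paper only uses $(Q,Z)\mapsto Z\cdot Q$ to obtain $\mathcal{C}^{\circ\circ}_{ca}\subset\mathcal{C}^{\circ\circ}$ and derives the reverse inclusion from the minimality statement of Corollary~\ref{cor:them:KS:ext} after checking that $\mathcal{C}^{\circ\circ}_{ca}$ is convex, solid, sequentially order closed and $\mathcal{P}$-sensitive. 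Your version is a bit more economical, the paper's has the side benefit of recording those structural properties of $\mathcal{C}^{\circ\circ}_{ca}$. Second, for the $sca$ part the inclusion $\mathcal{C}^{\circ\circ}_{sca}\subset\mathcal{C}^{\circ\circ}_{\mathcal{Q}}$ genuinely needs that $Z\cdot Q$ is supported whenever $Q$ is supported and $Z\in L^\infty_{c+}$; the paper dismisses this with ``similar to the previous considerations'', while you verify it explicitly via the candidate support $S(Q)\cap\{f>0\}$, and your verification of conditions (a) and (b) of Definition~\ref{defi:supp} is correct (after replacing $f$ by $f\vee 0$, which is still a representative of $Z$). So the proposal is sound and, on this point, slightly more complete than the printed proof.
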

\begin{proof}
    Note that any $(Q, Z) \in \mathfrak{P}_c(\Omega)\times L^\infty_{c+}$  can be identified with a measure $\mu \in ca_c$ given by $\mu[A] = E_Q[Z \mathbf{1}_A]$, $A \in \mathcal{F}$. Hence, if $(Q,Z)\in \mathcal{C}^{\circ}$ it follows that  $\mu \in \mathcal{C}^\circ_{ca}$, and therefore
    $\mathcal{C}^{\circ\circ}_{ca} \subseteq \mathcal{C}^{\circ\circ}$. 
    $\mathcal{C}^{\circ\circ}_{ca}$ contains $\mathcal{C}$, and $\mathcal{C}^{\circ\circ}_{ca}$ is convex and solid, and also sequentially order closed by the monotone convergence theorem. $\mathcal P$-sensitivity of $\mathcal{C}^{\circ\circ}_{ca}$ is shown in Proposition~\ref{prop:pSensi}. Hence, $\mathcal{C}^{\circ\circ}_{ca}=\mathcal{C}^{\circ\circ}$ follows from Corollary~\ref{cor:them:KS:ext}. The latter then also implies that $\mathcal{C}=\mathcal{C}^{\circ\circ}_{ca}$ if and only if $\mathcal{C}$ is convex, solid, sequentially order closed, and $\mathcal P$-sensitive.
    
    If $\mathcal C$ is $\mathcal P$-sensitive with reduction set $\mathcal{Q}\subseteq sca_c$, then, similar to the previous considerations, we obtain $\mathcal{C} \subseteq \mathcal{C}^{\circ\circ}_{sca} \subseteq \mathcal{C}^{\circ\circ}_\mathcal{Q}$. If $\mathcal{C}$ is also convex, solid, and sequentially order closed, then $\mathcal{C} =\mathcal{C}^{\circ\circ}= \mathcal{C}^{\circ\circ}_\mathcal{Q}$ by Corollary~\ref{cor:them:KS:ext}, so we must have $\mathcal{C} =\mathcal{C}^{\circ\circ}=\mathcal{C}^{\circ\circ}_{sca}$.
     
    Convexity of $\mathcal{C}^\circ_{ca}$ and $\mathcal{C}^\circ_{sca}$ is easily verified. Regarding solidness, note that if $\nu,\mu\in ca_{c+}$ are such that $\nu[A] \leq \mu[A]$ for all $A \in \mathcal{F}$, then $\int X d\nu \leq \int X d\mu$ for all $X\in L^0_{c+}$. We proceed to prove $\sigma(ca_c,L^\infty_c)$-closedness of $\mathcal{C}^\circ_{ca}$: Consider a net $(\mu_\alpha)_{\alpha\in I}\subseteq \mathcal{C}^\circ_{ca}$ such that $\mu_\alpha\to \mu$ with respect to $\sigma(ca_c,L^\infty_c)$. For all $X\in \mathcal C$ and all $n\in \N$ we have $\int (X\wedge n) d\mu_\alpha \leq \int X d\mu_\alpha \leq 1$ by monotonicity of the integral. Moreover, $$\int (X\wedge n)d\mu = \lim_\alpha \int (X\wedge n) d\mu_\alpha\leq 1 $$ since $(X\wedge n)\in L^\infty_c$. As $\mu\in ca_{c+}$, the monotone convergence theorem now implies $\int X d\mu \leq 1$. Hence, $\mu \in \mathcal{C}^\circ_{ca}$. The same argument shows  $\sigma(sca_c,L^\infty_c)$-closedness of $\mathcal{C}^\circ_{sca}$.
\end{proof}

Finally, we give the following standard result on $\mathcal{C}_{ca}^\circ$ which will be needed in Section~\ref{sec:mass:transport}.

\begin{lem} \label{lem:polar:aux}
    Let $\mathcal{M}\subseteq ca_{c+}$ be non-empty and define
    \begin{equation*}
        \mathcal{C}:=\bigg\{X\in L^0_{c+} \colon \forall \mu \in\mathcal{M} \ \int X d\mu \leq 1\bigg\}.
    \end{equation*}
    Then $\mathcal{C}_{ca}^\circ$ is the smallest solid, convex, and $\sigma(ca_c,L^\infty_c)$-closed subset of $ca_{c+}$ containing $\mathcal{M}$.
    
    The same assertion holds if $ca$ is replaced by $sca$.
\end{lem}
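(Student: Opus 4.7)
The plan is to verify the four defining properties (containment of $\mathcal{M}$, convexity, solidness, weak-star closedness) and then establish minimality via Hahn--Banach separation.

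First, $\mathcal{M}\subset \mathcal{C}^\circ_{ca}$ follows directly from the definitions: any $\mu\in \mathcal{M}$ satisfies $\int X\, d\mu \leq 1$ for every $X\in \mathcal{C}$ by the very definition of $\mathcal{C}$. Convexity, solidness, and $\sigma(ca_c,L^\infty_c)$-closedness of $\mathcal{C}^\circ_{ca}$ are already established in Corollary~\ref{cor:l0:bipolar:meas}.

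For minimality, let $\mathcal{N}\subset ca_{c+}$ be any convex, solid, $\sigma(ca_c,L^\infty_c)$-closed set containing $\mathcal{M}$, and suppose for contradiction that $\mu_0\in \mathcal{C}^\circ_{ca}\setminus \mathcal{N}$. Since $L^\infty_c$ separates the points of $ca_c$ via indicators $\mathbf{1}_A$, the topology $\sigma(ca_c,L^\infty_c)$ is locally convex and Hausdorff, so the Hahn--Banach separation theorem yields $X\in L^\infty_c$ and reals $\beta<\alpha$ with $\int X\, d\nu \leq \beta < \alpha \leq \int X\, d\mu_0$ for all $\nu\in \mathcal{N}$. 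The crucial step is to refine $X$ to $X^+$ using solidness: for any $\nu\in \mathcal{N}$, the measure $\nu_+$ defined by $\nu_+(A):=\nu(A\cap \{X>0\})$ is pointwise dominated by $\nu$ and lies in $ca_{c+}$, so solidness of $\mathcal{N}$ gives $\nu_+\in \mathcal{N}$. Consequently $\int X^+\, d\nu = \int X\, d\nu_+ \leq \beta$ for every $\nu\in \mathcal{N}$, whereas $\int X^+\, d\mu_0 \geq \int X\, d\mu_0 \geq \alpha$ since $\mu_0\in ca_{c+}$.

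Since $\mathcal{M}\neq \emptyset$ gives $\mathcal{N}\neq \emptyset$, solidness forces $0\in \mathcal{N}$ and hence $\beta\geq 0$. If $\beta>0$, then $Y:=X^+/\beta\in L^\infty_{c+}$ satisfies $\int Y\, d\mu \leq 1$ for every $\mu\in \mathcal{M}$, so $Y\in \mathcal{C}$ and $\int Y\, d\mu_0 \leq 1$, contradicting $\int X^+\, d\mu_0\geq \alpha > \beta$. If $\beta=0$, applying the same reasoning to $nX^+$ for every $n\in \N$ shows $nX^+\in \mathcal{C}$ and forces $\int X^+\, d\mu_0\leq 1/n$, hence $\int X^+\, d\mu_0=0$, contradicting $\int X^+\, d\mu_0\geq \alpha>0$. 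Thus $\mathcal{C}^\circ_{ca}\subset \mathcal{N}$. The $sca$ version follows the same template: if $\nu\in sca_{c+}$ with order support $S(\nu)$, then $\nu_+$ is supported with $S(\nu_+)=S(\nu)\cap \{X>0\}$, so $\nu_+\in sca_{c+}$ and solidness of $\mathcal{N}$ in $sca_{c+}$ again produces $\nu_+\in \mathcal{N}$; all subsequent steps remain unchanged.

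The main obstacle is that the Hahn--Banach separator $X\in L^\infty_c$ need not be non-negative and therefore cannot be inserted into $\mathcal{C}$ directly; the passage to $X^+$ via solidness-based restriction of measures to $\{X>0\}$, together with the scaling trick needed to rule out the borderline case $\beta=0$, are the technical workarounds that carry the argument.
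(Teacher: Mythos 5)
Your proof is correct and follows essentially the same route as the paper's: Hahn--Banach separation of a missing point $\mu_0$ from the competitor set, passage from the separator $X$ to $X^+$ by exploiting solidness (restricting each measure to the set where $X$ is positive), and a scaling argument splitting into the cases $\beta>0$ and $\beta=0$ to contradict $\mu_0\in\mathcal{C}^\circ_{ca}$. The only cosmetic differences are that the paper phrases the final contradiction via $\mathcal{C}=\mathcal{C}^{\circ\circ}_{ca}$ whereas you use the (equivalent, slightly more direct) fact that $\mu_0\in\mathcal{C}^\circ_{ca}$ integrates every element of $\mathcal{C}$ to at most $1$, and you spell out the supportedness of $\nu_+$ for the $sca$ variant, which the paper leaves implicit.
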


\begin{proof}
    Clearly, $\mathcal{M}\subseteq \mathcal{C}_{ca}^\circ$, and solidness, convexity, and $\sigma(ca_c,L^\infty_c)$-closedness is shown in Corollary~\ref{cor:l0:bipolar:meas}. Suppose there is another solid, convex, and $\sigma(ca_c,L^\infty_c)$-closed subset $\mathcal{D}$ of $ca_{c+}$ such that $\mathcal{M}\subseteq \mathcal{D}\subset \mathcal{C}_{ca}^\circ$. Let $\mu\in \mathcal{C}_{ca}^\circ\setminus \mathcal{D}$. Then by the Hahn-Banach separation theorem, see \cite[Corollary 5.80]{AB2006}, there is $X\in L^\infty_c$ such that
    \[\beta :=\sup_{\nu\in \mathcal{D}}\int X d\nu < \int X d\mu.\]
    Note that
    \[\beta=\sup_{\nu\in \mathcal{D}}\int X^+ d\nu\]
    where $X^+=\max\{X,0\}$. Indeed, let $A:=\{X\geq 0\}$. By solidness of $\mathcal{D}$, for all $\nu \in \mathcal{D}$ we also have $\nu_A\in \mathcal{D}$ where $\nu_A$ is given by $\nu_A[\, \cdot \,]=\nu[\, \cdot \cap A]$ ($\nu_A = 0$ in case $\nu[A] = 0$). Clearly,
    \begin{equation*}
        \int X^+d \nu = \int X d\nu_A \geq \int X d\nu.
    \end{equation*}
    Since $\int X d\mu\leq \int X^+ d\mu$, we may from now on assume that $X\in L^0_{c+}$. If $\beta=0$, then $tX\in \mathcal{C}$ for all $t>0$. However, there is $t>0$ such that $\int tXd\mu>1$, so $tX \not \in \mathcal{C}_{ca}^{\circ\circ}$. But this contradicts $\mathcal{C}=\mathcal{C}_{ca}^{\circ\circ}$ (see Corollary~\ref{cor:l0:bipolar:meas}). Similarly, if $\beta>0$, then $\tfrac{X}{\beta}\in \mathcal{C}$, but $\tfrac{X}{\beta}\not \in \mathcal{C}_{ca}^{\circ\circ}$ which again contradicts $\mathcal{C}=\mathcal{C}_{ca}^{\circ\circ}$. Hence, $\mu$ cannot exist.
\end{proof}

\section{Applications} \label{sec:appl}

In Sections~\ref{sec:appli:1}--\ref{sec:appli:3} we show how the bipolar theorems of \cite{GM2020}, \cite{LMS2022}, and \cite{BK2019} are special cases of our results in Section~\ref{sec:bip:thm}.

\subsection{A Robust Bipolar Theorem given in \cite{GM2020}}\label{sec:appli:1}

Our results imply the following bipolar theorem given in \cite[Theorem 14]{GM2020}:

\begin{cor}
    Assume that $ca_{c}^{*} = L^{\infty}_{c}$, i.e., the norm dual space of $ca_{c}$ can be identified with $L^\infty_c$. Let $\mathcal{C} \subseteq L^{0}_{c+}$ be non-empty, convex, order closed, and solid in $L^{0}_{c+}$. Set
    \begin{equation*}
        ca_{c}^{\infty} := \mathrm{span}\{\mu_{P, Z} \colon P \in \mathcal{P}, \ Z \in L^{\infty}_{c}\},
    \end{equation*}
    the linear space spanned by signed measures of type $\mu_{P, Z}[A] := E_{P}[Z \mathbf{1}_{A}]$, $A \in \mathcal{F}$. Then we have
    \begin{equation*}
        \mathcal{C} = \mathcal{C}^{**} := \bigg\{X \in L^{0}_{c+} \colon \forall \mu \in \mathcal{C}^{*} \ \int Xd\mu \leq 1\bigg\},
    \end{equation*}
    where
    \begin{equation*}
        \mathcal{C}^{*} := \bigg\{\mu \in ca_{c+}^{\infty} \colon \forall X \in \mathcal{C} \ \int Xd\mu \leq 1\bigg\}.
    \end{equation*}
\end{cor}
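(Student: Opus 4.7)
The plan is to reduce to Corollary~\ref{cor:l0:bipolar:meas}, which already produces a bipolar representation of $\mathcal{C}$ using the full cone $ca_{c+}$, and then to shrink the dual cone down to $ca_{c+}^\infty$ via a density argument.

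First, I would check that the standing hypothesis $ca_c^\ast = L^\infty_c$ forces $\mathcal{P}$ to be of class~(S); granted this, $sca_c$ separates the points of $L^\infty_c$ by \citet[Proposition B.5]{LMS2022} and is closed under $\mu\mapsto|\mu|$. Corollary~\ref{cor:oCLsd:sensi} then applies to the convex, solid, order closed set $\mathcal{C}$ and yields $\mathcal{P}$-sensitivity of $\mathcal{C}$ with a reduction set contained in $\mathfrak{P}_c(\Omega)\cap sca_c$. Since $\mathcal{C}$ is in particular sequentially order closed, Corollary~\ref{cor:l0:bipolar:meas} delivers
\begin{equation*}
\mathcal{C} \;=\; \mathcal{C}_{ca}^{\circ\circ} \;=\; \{X\in L^0_{c+}\mid \forall \mu\in \mathcal{C}_{ca}^{\circ}\colon \int X\,d\mu \le 1\}.
\end{equation*}

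Because $ca_c^\infty\subset ca_c$ we have $\mathcal{C}^\ast=\mathcal{C}_{ca}^\circ\cap ca_{c+}^\infty$, so the inclusion $\mathcal{C}=\mathcal{C}_{ca}^{\circ\circ}\subset \mathcal{C}^{\ast\ast}$ is trivial. For the reverse inclusion, I would invoke Lemma~\ref{lem:polar:aux} with $\mathcal{M}:=\mathcal{C}^\ast$: the polar $(\mathcal{C}^{\ast\ast})_{ca}^\circ$ coincides with the smallest solid, convex, $\sigma(ca_c,L^\infty_c)$-closed subset of $ca_{c+}$ containing $\mathcal{C}^\ast$. If one can establish $\mathcal{C}_{ca}^\circ\subset (\mathcal{C}^{\ast\ast})_{ca}^\circ$, then taking polars once more via Corollary~\ref{cor:l0:bipolar:meas} yields $\mathcal{C}^{\ast\ast}=((\mathcal{C}^{\ast\ast})_{ca}^\circ)_{ca}^\circ\subset (\mathcal{C}_{ca}^\circ)_{ca}^\circ=\mathcal{C}$, as required.

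The heart of the argument, and the main obstacle, is therefore the density claim that every $\mu\in ca_{c+}$ is monotonically approximable from below by elements of $ca_{c+}^\infty$ in the $\sigma(ca_c,L^\infty_c)$-topology. The intended mechanism is, for $\mu\in ca_{c+}$, to locate a convex combination $\bar P=\sum_{i=1}^n\alpha_i P_i$ of finitely many $P_i\in\mathcal{P}$ with $\mu\ll\bar P$, write $\mu=f\cdot\bar P$, and set $\mu_k:=(f\wedge k)\cdot\bar P\in ca_{c+}^\infty$; monotone convergence then yields $\int X\,d\mu_k\uparrow\int X\,d\mu$ for every $X\in L^0_{c+}$. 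Solidness of $\mathcal{C}_{ca}^\circ$ places each such $\mu_k$ in $\mathcal{C}^\ast$ whenever $\mu\in\mathcal{C}_{ca}^\circ$, closing the loop. Producing the dominating combination $\bar P$ for an arbitrary $\mu\in ca_{c+}$ is the non-trivial step where the hypothesis $ca_c^\ast=L^\infty_c$ is expected to enter, converting the norm-dual structure of $ca_c$ into a $\mathcal{P}$-based domination of each of its elements.
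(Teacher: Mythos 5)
Your first half is sound and matches the paper: $ca_c^*=L^\infty_c$ gives class (S) and $sca_c=ca_c$, Corollary~\ref{cor:oCLsd:sensi} gives $\mathcal{P}$-sensitivity, Corollary~\ref{cor:l0:bipolar:meas} gives $\mathcal{C}=\mathcal{C}^{\circ\circ}_{ca}$, and the inclusion $\mathcal{C}\subset\mathcal{C}^{**}$ is immediate. The reverse inclusion is where you have a genuine gap, and the mechanism you sketch for closing it does not work. You propose to dominate an arbitrary $\mu\in ca_{c+}$ by a \emph{finite} convex combination $\bar P=\sum_{i=1}^n\alpha_iP_i$ of elements of $\mathcal{P}$ and then truncate the density. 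This fails already in the simplest non-dominated example satisfying the standing hypothesis: take $\Omega=\N$, $\mathcal{F}=2^{\N}$, $\mathcal{P}=\{\delta_n\mid n\in\N\}$, so that $ca_c\cong\ell^1$ and $ca_c^*=\ell^\infty=L^\infty_c$. A fully supported probability measure $\mu$ on $\N$ is not absolutely continuous with respect to any finite convex combination of Dirac measures, so no $\bar P$ of the required form exists. (In this example the needed weak approximation of $\mu$ by elements of $ca_{c+}^\infty$ does hold, via restriction to finite sets rather than via a dominating $\bar P$, but that is a different argument; and whether a restrict-and-truncate repair works for arbitrary $\mu\in ca_{c+}$ under $ca_c^*=L^\infty_c$ is precisely the substantial claim you would still have to prove. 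You correctly identify this as ``the heart of the argument,'' but you neither prove it nor propose a mechanism that survives the example above.)

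The paper's proof avoids the density question entirely by a better choice of reduction set. Since $sca_c=ca_c$, the subspace $ca_c^\infty$ itself sits inside $sca_c$, separates the points of $L^\infty_c$, and is stable under $\mu\mapsto|\mu|$; applying Corollary~\ref{cor:oCLsd:sensi} with $\mathcal{Y}=ca_c^\infty$ (rather than with all of $sca_c$, as you do) yields $\mathcal{P}$-sensitivity of $\mathcal{C}$ with the \emph{specific} reduction set $\mathcal{Q}=ca_{c+}^\infty\cap\mathfrak{P}_c(\Omega)$. Theorem~\ref{thm:l0:bipolar:ext:ks} applied with this $\mathcal{Q}$ then gives $\mathcal{C}=\mathcal{C}^{\circ\circ}_{\mathcal{Q}}$, and every polar element $(Q,Z)\in\mathcal{C}^{\circ}_{\mathcal{Q}}$ induces the measure $\mu(A)=E_Q[Z\mathbf{1}_A]$ which automatically lies in $ca_{c+}^\infty$ (a bounded $Z$ times a finite combination of $\mu_{P_i,Z_i}$'s stays in the span), hence in $\mathcal{C}^*$. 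This gives $\mathcal{C}^{**}\subset\mathcal{C}^{\circ\circ}_{\mathcal{Q}}=\mathcal{C}$ with no approximation of dual elements needed. If you want to salvage your route, you should either prove the weak density of $\mathcal{C}^*$ in $\mathcal{C}^\circ_{ca}$ under the hypothesis $ca_c^*=L^\infty_c$ (nontrivial), or switch to the reduction-set argument.
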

\begin{proof}
    The condition $ca_{c}^{*} = L^{\infty}_{c}$ implies that $\mathcal P$ is of class (S)  (see \cite[Lemma 5.15]{LMS2022}) and that $sca_c=ca_c$ (see \cite[Theorem 4.60]{AB2006}). Therefore, in particular, $ca_{c}^{\infty}\subseteq sca_c$. As $ca_{c}^{\infty}$ is separating the points of $L^\infty_c$, Corollary~\ref{cor:oCLsd:sensi} implies that $\mathcal C$ is $\mathcal P$-sensitive with reduction set $ \mathcal{Q}:=ca_{c+}^{\infty}\cap \mathfrak{P}_c(\Omega)$.
    Let $(Q,Z)$ be an element of the polar $\mathcal{C}^{\circ}_{\mathcal{Q}}$ given in Theorem~\ref{thm:l0:bipolar:ext:ks}. The condition 
    \begin{equation*}
        \forall X\in \mathcal{C} \quad E_Q[XZ] \leq 1
    \end{equation*}
    is equivalent to
    \begin{equation*}
        \forall X\in \mathcal{C} \quad \int Xd\mu \leq 1,
    \end{equation*}
    where $\mu\in ca_{c}^{\infty}$ is given by $\mu[A]:=E_Q[\mathbf{1}_AZ]$, $A\in \mathcal{F}$, and therefore $\mu\in \mathcal{C}^\ast$.   Consequently, $\mathcal{C}^{**}\subseteq \mathcal{C}^{\circ\circ}_{\mathcal{Q}}$, and as $\mathcal{C}\subseteq \mathcal{C}^{**}$ and by Theorem~\ref{thm:l0:bipolar:ext:ks},
    $$\mathcal{C}\subseteq \mathcal{C}^{**}\subseteq \mathcal{C}^{\circ\circ}_{\mathcal{Q}}=\mathcal{C}.$$
\end{proof}

\subsection{Another Robust Bipolar Theorem provided in \cite{LMS2022}}\label{sec:appli:2}

Our results also imply the following robust bipolar theorem which can be found in \cite[Theorem 4.2]{LMS2022}:

\begin{thm}
    Suppose that $\mathcal{P}$ is of class (S). Then for all convex and solid sets $\emptyset \neq \mathcal{C} \subseteq L^{0}_{c+}$, order closedness of $\mathcal{C}$ is equivalent to $\mathcal{C} = \mathcal{C}^{\circ\circ}_{sca}$ where $\mathcal{C}^{\circ\circ}_{sca}$ is given in Corollary~\ref{cor:l0:bipolar:meas}.
\end{thm}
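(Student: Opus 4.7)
The plan is to split the equivalence into its two implications, dispatching ``$\Leftarrow$'' (order closedness of $\mathcal{C}^{\circ\circ}_{sca}$) by a direct truncation argument using order continuity of $sca_c$-integrals, and ``$\Rightarrow$'' (existence of the bipolar representation) by combining Corollary~\ref{cor:oCLsd:sensi} with Corollary~\ref{cor:l0:bipolar:meas}. Both directions should be short, as the heavy lifting has already been done in Sections~\ref{sec:clsd}--\ref{sec:bip:thm}.

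For ``$\Leftarrow$'': $\mathcal{C}^{\circ\circ}_{sca}$ is convex and solid in $L^0_{c+}$ by construction. To verify order closedness, I would take a net $(X_\alpha)_{\alpha\in I} \subset \mathcal{C}^{\circ\circ}_{sca}$ with $X_\alpha \order{c} X \in L^0_{c+}$ and fix $\mu \in \mathcal{C}^\circ_{sca}$. For each $k\in\N$ the truncations satisfy $|X_\alpha\wedge k - X\wedge k|\preccurlyeq |X_\alpha-X|$, so $X_\alpha\wedge k \order{c} X\wedge k$ in $L^\infty_c$. Since $\mu\in sca_c$, the functional $Y\mapsto \int Y d\mu$ is order continuous on $L^\infty_c$, so passing to the net limit gives $\int X\wedge k\,d\mu = \lim_\alpha \int X_\alpha\wedge k\,d\mu \leq \sup_\alpha \int X_\alpha d\mu \leq 1$. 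Letting $k\uparrow \infty$ and applying monotone convergence yields $\int X d\mu \leq 1$, so $X \in \mathcal{C}^{\circ\circ}_{sca}$.

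For ``$\Rightarrow$'': assume $\mathcal{C}$ is convex, solid, and order closed. Since $\mathcal{P}$ is of class (S), the subspace $\mathcal{Y}:=sca_c$ separates the points of $L^\infty_c$ by \citet[Proposition B.5]{LMS2022}, and trivially satisfies $\mu\in sca_c\Rightarrow|\mu|\in sca_c$. Corollary~\ref{cor:oCLsd:sensi} then applies and delivers that $\mathcal{C}$ is $\mathcal{P}$-sensitive with reduction set $\mathcal{Q}:=\mathfrak{P}_c(\Omega)\cap sca_c$, which is non-empty thanks to class (S). Order closedness implies sequential order closedness, so $\mathcal{C}$ meets the hypotheses of the second part of Corollary~\ref{cor:l0:bipolar:meas} with this $\mathcal{Q}\subset sca_c$; we conclude $\mathcal{C}=\mathcal{C}^{\circ\circ}_{sca}$.

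The main obstacle is conceptual rather than computational: one must see that class (S) is precisely the input that upgrades order closedness of $\mathcal{C}$ to $\mathcal{P}$-sensitivity with a reduction set lying inside $sca_c$, which then unlocks the $sca$-version of the robust bipolar theorem; without class (S) the same argument would only give a reduction set in $\mathfrak{P}_c(\Omega)$, and the representation would have to be taken over $ca_c$ instead of $sca_c$.
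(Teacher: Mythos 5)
Your proposal is correct and follows essentially the same route as the paper: the forward implication is obtained exactly as in the paper's proof by combining Corollary~\ref{cor:oCLsd:sensi} (with $\mathcal{Y}=sca_c$, which separates points of $L^\infty_c$ under class (S)) with the $sca$-part of Corollary~\ref{cor:l0:bipolar:meas}. The only difference is that the paper dismisses the converse direction with ``clearly, $\mathcal{C}^{\circ\circ}_{sca}$ is order closed,'' whereas you supply the underlying truncation/order-continuity/monotone-convergence argument explicitly, which is a correct and welcome elaboration rather than a different approach.
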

\begin{proof} Clearly, $\mathcal{C}^{\circ\circ}_{sca}$ is order closed (and convex and solid).
    If the convex and solid set $\mathcal{C}$ is also order closed, then, according to Corollary~\ref{cor:oCLsd:sensi}, $\mathcal{C}$ is $\mathcal{P}$-sensitive with reduction set $\mathfrak{P}_{c}(\Omega) \cap sca_{c}$. Thus, we can apply Corollary~\ref{cor:l0:bipolar:meas} to obtain the result.
\end{proof}

\subsection{Yet another Robust Bipolar Theorem given in \cite{BK2019}}\label{sec:appli:3}

Consider the case $\mathcal{P} = (\delta_{\omega})_{\omega \in \Omega}$, so that $\preceq$ coincides with the pointwise order and $L^{0}_{c}=\mathcal{L}^{0}$ and $ca_{c} = ca$. 
In \cite[Theorem 1]{BK2019} the following pointwise bipolar theorem is proved: 
\begin{thm} \label{thm:kupper:bartl}
Let $\mathcal C$ be a non-empty solid regular subset of $\mathcal{L}^{0}_+$. Then $\mathcal{C}=\mathcal{C}^{\circ\circ}_{ca}$ (where $\mathcal{C}^{\circ\circ}_{ca}$ is given in Corollary~\ref{cor:l0:bipolar:meas}) if and only if $\mathcal C$ is convex and closed under $\operatorname{lim \,  inf}$.
\end{thm}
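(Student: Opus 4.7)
The plan is to obtain Theorem~\ref{thm:kupper:bartl} as a specialization of Corollary~\ref{cor:l0:bipolar:meas} in the pointwise framework $\mathcal{P}=(\delta_\omega)_{\omega\in\Omega}$. In this case the only $\mathcal{P}$-polar set is $\emptyset$, the quasi-sure order $\preccurlyeq$ coincides with the pointwise order on $\mathcal{L}^0$, $L^0_c=\mathcal{L}^0$, $ca_c=ca$, and $\mathfrak{P}_c(\Omega)=\mathfrak{P}(\Omega)$. Corollary~\ref{cor:l0:bipolar:meas} tells us that $\mathcal{C}=\mathcal{C}^{\circ\circ}_{ca}$ holds precisely when $\mathcal{C}$ is convex, solid, sequentially order closed, and $\mathcal{P}$-sensitive. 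Thus, under the standing hypothesis that $\mathcal{C}$ is non-empty and solid, the proof reduces to matching the Bartl--Kupper conditions \emph{regular} and \emph{closed under $\liminf$} with our conditions \emph{$\mathcal{P}$-sensitive} and \emph{sequentially order closed}.

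For the forward direction (assuming $\mathcal{C}$ is convex and closed under $\liminf$), I would first observe that the combination of solidness and closedness under $\liminf$ implies sequential order closedness: if $(X_n)_{n\in\N}\subset\mathcal{C}$ and $X_n\order{c}X$ in $\mathcal{L}^0$, a witnessing dominator $Y_n\downarrow 0$ satisfies $\inf_n Y_n=0$ in the pointwise order, so $Y_n(\omega)\to 0$ for every $\omega$; this forces $X_n\to X$ pointwise, hence $X=\liminf_n X_n\in\mathcal{C}$. Second, the Bartl--Kupper notion of regularity is, in the pointwise framework, an aggregation/pasting condition along countable partitions of $\Omega$, which by Proposition~\ref{prop:pSensi:aggre} amounts to $\mathcal{Q}$-stability (that is, $\mathcal{P}$-sensitivity with reduction set $\mathcal{Q}$) for a suitable class $\mathcal{Q}\subset\mathfrak{P}_c(\Omega)$ of purely atomic probability measures. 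Together with convexity and solidness, Corollary~\ref{cor:l0:bipolar:meas} then yields $\mathcal{C}=\mathcal{C}^{\circ\circ}_{ca}$.

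For the converse, convexity of $\mathcal{C}^{\circ\circ}_{ca}$ is immediate from its definition. For closedness under $\liminf$, let $(X_n)_{n\in\N}\subset\mathcal{C}^{\circ\circ}_{ca}$ with $X:=\liminf_n X_n\in\mathcal{L}^0_+$. For every $\mu\in\mathcal{C}^\circ_{ca}$, Fatou's lemma gives
\[
\int X\,d\mu\;\leq\;\liminf_n\int X_n\,d\mu\;\leq\;1,
\]
so $X\in\mathcal{C}^{\circ\circ}_{ca}$. The main obstacle I anticipate is pinning down the exact dictionary between the Bartl--Kupper property \emph{regular} and our notion of $\mathcal{P}$-sensitivity (and identifying which reduction set $\mathcal{Q}$ is natural in the pointwise framework); once this bookkeeping is in place, the theorem follows as a routine consequence of Corollary~\ref{cor:l0:bipolar:meas} and Fatou's lemma.
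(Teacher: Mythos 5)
There is a genuine gap, and it sits exactly at the step you yourself flag as ``bookkeeping.'' First, note that the paper does not prove this theorem at all: it is quoted verbatim from \citet{BK2019}, and the surrounding discussion in Section~\ref{sec:appli:3} runs the logic in the \emph{opposite} direction from yours --- it takes Theorem~\ref{thm:kupper:bartl} as an external input and combines it with Corollary~\ref{cor:l0:bipolar:meas} to \emph{conclude} that regularity implies $\mathcal{P}$-sensitivity for convex, solid, $\liminf$-closed sets. Your plan is to prove that implication directly and then invoke Corollary~\ref{cor:l0:bipolar:meas}; that would be a legitimate alternative route, but you never actually supply the implication.

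The specific problem is your description of regularity as ``an aggregation/pasting condition along countable partitions of $\Omega$.'' It is not. Regularity is the condition \eqref{eq:regular}: for every $\mu\in ca_+$ the supremum of $\int h\,d\mu$ over $\mathcal{C}\cap U_b$ equals the supremum over $\mathcal{C}\cap C_b$, i.e.\ an approximation property of bounded upper semi-continuous elements of $\mathcal{C}$ by continuous ones, which only makes sense because $\Omega$ is assumed to be a $\sigma$-compact metric space. Nothing in Proposition~\ref{prop:pSensi:aggre} (which merely translates $\mathcal{P}$-sensitivity into $\mathcal{Q}$-stability) converts \eqref{eq:regular} into a pasting property along atomic measures; establishing that regularity forces $\mathcal{Q}$-stability (equivalently, $\mathcal{P}$-sensitivity) for convex, solid, $\liminf$-closed sets is essentially the analytic core of \citet[Theorem 1]{BK2019} and would require the capacitability/inner-regularity arguments carried out there. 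Note also that the example closing Section~\ref{sec:appli:3} exhibits a $\mathcal{P}$-sensitive, convex, solid, sequentially order closed set that is \emph{not} regular, so the two notions are genuinely different and the dictionary you hope for cannot be a simple equivalence. The parts of your argument that do work --- the equivalence of $\liminf$-closedness with sequential order closedness for solid sets in the pointwise order, and the converse direction via Fatou's lemma --- are correct but are the easy half; as written, the forward direction rests on an unproved and miscast claim.
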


In \cite{BK2019}, $\mathcal C$ is called regular if
\begin{equation} \label{eq:regular}
    \forall \mu\in ca_+ \quad \sup_{h\in \mathcal{C}\cap U_b}\int h d\mu = \sup_{h\in \mathcal{C}\cap C_b}\int h d\mu
\end{equation}
where $C_b$ and $U_b$ denote the spaces of bounded functions $f\in \mathcal{L}^{0}$ which are in addition continuous or upper semi-continuous, respectively. Involving continuity properties of course requires that $\Omega$ carries a topology, and in fact, \cite{BK2019} assume $\Omega$ to be a $\sigma$-compact metric space and $\mathcal{F}$ to be the corresponding Borel $\sigma$-algebra.

$\mathcal C$ is said to be closed under $\operatorname{lim \, inf}$ whenever $\operatorname{lim \,  inf}_{n\to \infty} h_n\in \mathcal{C}$ for any sequence $(h_n)_{n\in \N}\subseteq \mathcal{C}$. One verifies that, for solid sets, being closed under $\operatorname{lim \,  inf}$ is equivalent to sequential order closedness. In view of Theorem~\ref{thm:l0:bipolar:ext:ks}, we observe that the rather technical assumption of regularity \eqref{eq:regular} simply implies $\mathcal{P}$-sensitivity of $\mathcal{C}$. Indeed, recall that by Corollary~\ref{cor:l0:bipolar:meas}, $\mathcal{C}=\mathcal{C}^{\circ\circ}_{ca}$ if and only if $\mathcal{C}$ is $\mathcal{P}$-sensitive, convex, solid, and closed under $\operatorname{lim \,  inf}$ (sequentially order closed). Since, given a set $\mathcal{C}$ which is convex, solid, and closed under $\operatorname{lim \, inf}$, regularity of $\mathcal{C}$ implies $\mathcal{C}=\mathcal{C}^{\circ\circ}_{ca}$  according to Theorem~\ref{thm:kupper:bartl}, regularity must in this case imply  $\mathcal{P}$-sensitivity.

Hence, it follows that Theorem~\ref{thm:kupper:bartl} is a special case of Theorem~\ref{thm:l0:bipolar:ext:ks}. Moreover, in the following we illustrate that the necessary and sufficient requirements for $\mathcal{C}=\mathcal{C}^{\circ\circ}_{ca}$ we provide in Theorem~\ref{thm:l0:bipolar:ext:ks}, namely that $\mathcal{C}$ be $\mathcal{P}$-sensitive, convex, solid, and sequentially order closed, are weaker than replacing $\mathcal{P}$-sensitivity by regularity in the latter list of properties. To this end, we give an example of a set $\mathcal{C}$ which is $\mathcal{P}$-sensitive, convex, solid, and sequentially order closed, but not regular. 

\begin{exmp}
    Suppose that $\Omega = [0, 1]$. Let
    \begin{equation*}
        \mathcal{C} := \{X \in \mathcal{L}^{0}_{+} \colon X \preccurlyeq \mathbf{1}_{[\frac{1}{2}, 1]}\}.
    \end{equation*}
    Note that $\mathcal{C}$ is $\mathcal{P}$-sensitive (see Example~\ref{ex:sensitive:sets}~(5.)), convex, solid, and sequentially order closed. However, $\mathcal{C}$ is not regular, because $x\mapsto  \mathbf{1}_{[\frac{1}{2}, 1]}(x)$ is upper semi-continuous  and for $\mu = \delta_{\frac{1}{2}}$ we have
    \begin{equation*}
        \sup_{X \in \mathcal{C} \cap U_b} \int X d\mu = 1 > 0 = \sup_{X \in \mathcal{C} \cap C_b} \int X d\mu.
    \end{equation*}
\end{exmp}

\subsection{Superhedging and Martingale Measures} \label{sec:superhedging}

Recall Example~\ref{exmp:pSensi:aggre} and the set of superhedgeable claims at cost less than $1$
\begin{equation*}
    \mathcal{C} = \{X \in L^{0}_{c+} \colon \exists H \in \mathcal{H} \ X \preccurlyeq 1 + (H \cdot S)_{T}\}.
\end{equation*}
Clearly, $\mathcal{C}$ is non-empty, convex, and solid. Suppose that $\mathcal{C}$ is also $\mathcal P$-sensitive, see Example~\ref{exmp:pSensi:aggre}, and sequentially order closed. Then, according to Corollary~\ref{cor:l0:bipolar:meas}, $\mathcal{C} = \mathcal{C}^{\circ\circ}_{ca}$. We recall that under some conditions on $S$ and $\mathcal H$ the set $\mathcal{C}^{\circ}_{ca}\cap \mathfrak{P}_c(\Omega)$ is well-known to be closely related to the set of (local) martingale measures for $S$ (see e.g.\ \cite{KS1999} for the dominated case): For illustration, suppose that $S$ is a one-dimensional continuous process adapted to a filtration $(\mathcal{F}_t)_{t\geq 0}$, and let $\mathcal{P}_{b,sem}\subseteq \mathfrak{P}_c(\Omega)$ be the set of probability measures such that $S$ is a bounded semi-martingale under each $Q\in \mathcal{P}_{b,sem}$. Let $\mathcal{H}$ be (a subset of) the set of all processes such that the stochastic integral $(H \cdot S)={}^{Q}(H\cdot S)$ is defined for all $Q\in \mathcal{P}_{b,sem}$ in the usual semi-martingale sense. Further, let  $\mathcal{P}\subseteq \mathcal{P}_{b,sem}$.  The condition $X \preccurlyeq 1 + (H \cdot S)_{T}$ can then be interpreted as $P[X\leq 1 + {}^{P}(H\cdot S)_T] = 1$ for all $P\in \mathcal{P}$. One can go further and require that there is a universal process $(H \cdot S)$ which coincides $Q$-a.s.\ with the stochastic integrals ${}^{Q}(H\cdot S)$ for all $Q\in \mathcal{P}_{b,sem}$. This can be achieved for cadlag integrands $H\in \mathcal{H}$ according to \cite{K1995}. The probability model typically chosen here is the Wiener space with $S$ being the canonical process, and $(\mathcal{F}_t)_{t\geq 0}$ being the canonical filtration (or completions of that, appropriate for various purposes), see, for instance, \cite{STZ2011} or \cite{BKN2021}.

Let us verify that any $Q\in \mathcal{C}^{\circ}_{ca}\cap \mathfrak{P}_c(\Omega)$ is a martingale measure for $S$: Note that for simple processes of type $H^{A,t}_a(s,\omega):=a1_A(\omega)1_{(t,T]}(s)$, where  $A\in \mathcal{F}_t$, $a>0$, and $t\in [0,T]$, the (universal) process
\begin{equation*}
    (H^{A,t}_a\cdot S)_u= a 1_A(S_{T\wedge u}-S_{t\wedge u})
\end{equation*}
satisfies $(H^{A,t}_a\cdot S)={}^{P}(H^{A,t}_a\cdot S)$ $P$-a.s.\ for all $P\in \mathcal{P}_{b,sem}$. By boundedness of $S$, we find $a>0$ such that
\begin{equation*}
    -1\preccurlyeq (H_a^{A,t} \cdot S)_{T}= a1_A(S_T-S_t)\preccurlyeq 1.
\end{equation*}
Hence, $1+(H_a^{A,t} \cdot S)_{T}\in \mathcal{C}$ and $1-(H_a^{A,t} \cdot S)_{T}\in \mathcal{C}$. It follows that for all $Q\in \mathcal{C}^{\circ}_{ca}\cap \mathfrak{P}_c(\Omega)$ 
\begin{equation*}
    E_Q[(H_a^{A,t} \cdot S)_{T}]=0. 
\end{equation*}
That implies $E_Q[1_A(S_T-S_t)]=0$ and hence the martingale property of $S$ under $Q$.

Conversely, let us show that under mild conditions $\mathcal{C}^{\circ}_{ca}\cap \mathfrak{P}_c(\Omega)$ includes all martingale measures which are dominated by some probability measure in $\mathcal{P}$. To this end, assume that the stochastic integral $(H\cdot S)$ is universally defined in the above sense, and that $\mathcal{H}$ is further restricted to those processes such that $(H\cdot S)={}^P(H\cdot S)$ is $P$-a.s.\ bounded from below for any $P\in \mathcal{P}$, where the bound may depend on $H$ and $P$.  Consider a martingale measure $Q\in \mathfrak{P}_c(\Omega)$ for $S$ such that $Q\ll P$ for some $P\in \mathcal{P}$. The stochastic integrals $(H\cdot S)={}^{Q}(H\cdot S)$ are local martingales under $Q$ and the required $P$-a.s.\ lower bound for $(H\cdot S)$ is also a $Q$-a.s.\ lower bound for $(H\cdot S)$. Thus, the processes $(H\cdot S)$, $H\in \mathcal{H}$, are in fact $Q$-supermartingales. Hence, for all $H\in \mathcal{H}$,
\[E_{Q}[(H \cdot S)_{T}]\leq (H \cdot S)_{0}=0.\]
Consequently, for any $X\in \mathcal C$, it follows that
$E_Q[X]\leq  1$,
so $Q\in \mathcal{C}^{\circ}_{ca}\cap \mathfrak{P}_c(\Omega)$.  

A unifying study of robust fundamental theorems of asset pricing such as discussed in \cite{ABP2013}, \cite{BC2020}, \cite{BN2015}, \cite{BFH2019}, \cite{BM2020}, \cite{BRS2021}, \cite{C2024}, \cite{OW2021}, and \cite{R2015} as well as  a dual theory for robust utility maximization, adapting the considerations made in \cite{KS1999}, is work in progress.

\subsection{$\mathcal{P}$-sensitivity and  Acceptability Criteria for Random Costs/Losses} \label{sec:appli:acc}

In this example we show that $\mathcal{P}$-sensitivity is a natural property of acceptance sets in risk assessment. To this end, identify $L^0_{c+}$ with random costs/losses. Consider a non-empty set  $\mathcal A\subseteq L^0_{c+}$ of acceptable random costs. Assuming that $\mathcal A$ is solid means that if some costs are acceptable, then less costs are too. Convexity means that cost diversification is not penalized, and sequential order closedness implies that for an order convergent increasing sequence of acceptable losses, the limit remains acceptable. Finally, consider $\mathcal P$-sensitivity: Suppose that $\mathcal{A}$ is also $\mathcal P$-sensitive with reduction set $\mathcal{Q}$.
Then Theorem~\ref{thm:l0:bipolar:ext:ks} provides a dual characterisation of acceptability
\begin{equation*}
    X \in \mathcal A \quad \Longleftrightarrow \quad X \in L^0_{c+} \wedge \sup_{(Q,Z) \in \mathcal{A}_{\mathcal Q}^\circ} E_Q[Z X] \leq 1.
\end{equation*}
The interpretation of $\mathcal{A}_{\mathcal Q}^\circ$ is clear: Whether a loss $X$ is acceptable depends on a number of probability models $\mathcal{Q}$ under which $X$ is (stress) tested. Under each $Q\in \mathcal{Q}$, $X$ has to meet the requirement that $E_Q[ZX]\leq 1$ for the model specific test functions $T_Q := \{Z \colon (Z, Q) \in \mathcal{A}_{\mathcal Q}^\circ\}$. Defining the local acceptance sets $\mathcal{A}_Q := \{X \in L^0_{Q+} \colon \forall Z \in T_Q \ E_Q[j_Q(Z) X] \leq 1\}$, $Q \in \mathcal{Q}$, we have that
\begin{equation} \label{eq:acc}
    \mathcal{A} = L^0_{c+}\cap\bigcap_{Q\in \mathcal{Q}} j_Q^{-1}(\mathcal{A}_Q).
\end{equation} 

Conversely, a natural approach to robust risk assessment is to fix a set of probability measures $\mathcal{Q}\subseteq \mathfrak{P}_c(\Omega)$ and to evaluate the risk of a loss $X$ under each model $Q\in \mathcal{Q}$, for instance, by verifying whether $j_Q(X)\in \mathcal{A}_Q$ for a set of $Q$-acceptable losses $\mathcal{A}_Q\subseteq L^0_{Q}$. In that case the acceptable losses, these are those $X\in L^0_{c+}$ which are acceptable under each $Q\in \mathcal{Q}$, satisfy $$X\in \mathcal{A}=L^0_{c+} \cap \bigcap_{Q\in \mathcal{Q}} j_Q^{-1}(\mathcal{A}_Q).$$ Hence, the acceptance set $\mathcal{A}$ is by construction of type \eqref{eq:acc}, and therefore $\mathcal{P}$-sensitive according to Lemmas~\ref{lem:Psensi:acc} and \ref{lem:intersect:Psensi}. 

This illustrates that $\mathcal{P}$-sensitivity is indeed a quite natural requirement for robust risk assessment, since it corresponds to acceptability criteria of type \eqref{eq:acc}, which are based on evaluating the risk under different possible probability models and then taking a worst-case approach. 

\subsection{A Mass Transport Type Duality}\label{sec:mass:transport}

This application is inspired by \cite{BK2019} and a straightforward generalization of \cite[Section 4]{BK2019}. Consider two measurable spaces $(\Omega_{1},\mathcal{F}_1)$ and $(\Omega_{2},\mathcal{F}_2)$. Let $\Omega:=\Omega_1\times \Omega_2$ and $\mathcal{F}:=\mathcal{F}_1\otimes \mathcal{F}_2$ denote the product space. Consider probability measures $P_1$ on $(\Omega_{1},\mathcal{F}_1)$  and  $P_2$ on $(\Omega_{2},\mathcal{F}_2)$ and the set of probability measures $\mathcal{P}$ on $(\Omega,\mathcal{F})$ consisting of all $P\in \mathfrak{P}(\Omega)$ with marginals $P[\, \cdot \times \Omega_2] = P_1$ and  $P[\Omega_1 \times \cdot \,] = P_2$. Any $f \in \mathcal{L}^0_+(\Omega)$, which serves as a goal function, gives rise to the optimal mass transport (or Monge-Kantorovich) problem
\begin{equation*}
    \int f d P  \to \max \quad \text{subject to}\quad P\in \mathcal{P}.
\end{equation*} 
In fact, as we have been practising so far, we may identify $f$ with the equivalence class $X=[f]_c$ generated by $f$ in $L^0_c(\Omega)$ and write
\begin{equation*}
    \int X d P \to \max \quad \text{subject to}\quad P\in \mathcal{P}
\end{equation*}
where $c[A]=\sup_{P \in \mathcal P} P[A]$, $A \in \mathcal{F}$, is the upper probability corresponding to $\mathcal P$ on the product space $(\Omega, \mathcal{F})$, and  $L^0_c(\Omega)$ is the space of equivalence classes of $\mathcal P$-q.s.\ equal random variables on $(\Omega,\mathcal{F})$. \\
A robustification  of this problem is obtained by replacing the marginals $P_1$ and $P_2$ with  sets of marginals $\mathcal{P}_1\subseteq \mathfrak{P}(\Omega_1)$ and $\mathcal{P}_2\subseteq \mathfrak{P}(\Omega_2)$. Now $\mathcal{P}$ is the set of all probability measures $\mathcal{P}$ on $(\Omega,\mathcal{F})$ such that $P[ \, \cdot \times \Omega_2] \in \mathcal{P}_1$ and  $P[\Omega_1 \times \cdot \, ] \in \mathcal{P}_2$. We thus obtain the upper probabilities 
\begin{equation*}
    c_1[A] = \sup_{P \in \mathcal{P}_1} P[A], \, A \in \mathcal{F}_1, \quad  c_2[A] = \sup_{P \in \mathcal{P}_2} P[A], \, A \in \mathcal{F}_2, \quad \text{and} \quad c[A] = \sup_{P \in \mathcal{P}} P[A], \, A \in \mathcal{F},
\end{equation*}
and the corresponding spaces $L^0_{c_1}(\Omega_1)$, $L^0_{c_2}(\Omega_2)$, and $L^0_{c}(\Omega)$.
For $X_1\in L^0_{c_1}(\Omega_{1})$ and  $X_2\in L^0_{c_2}(\Omega_{2})$ we write $X_1\oplus X_2$ for the $\mathcal P$-q.s.\ equivalence class in $L^0_c(\Omega)$ given by $f_{1} \oplus f_{2}(\omega) := f_{1}(\omega_{1}) + f_{2}(\omega_{2})$, $\omega=(\omega_1,\omega_2)\in \Omega_1\times \Omega_2$, where $f_{1} \in X_{1}$ and $f_{2} \in X_{2}$.

Unfortunately, before we can state our duality result, we have to relax the mass transport problem as follows: Let $\mathcal{M}_i\subseteq ca_{c_i+}(\Omega_i)$ be a set such that  $\mathcal{M}_i=\mathcal{C}_{i,ca}^\circ$ for some non-empty, convex, solid, $\mathcal{P}_i$-sensitive, and sequentially order closed sets $\mathcal{C}_i\subseteq L^0_{c_i+}(\Omega_i)$, i=1,2. This assumption is needed to apply Theorem~\ref{thm:l0:bipolar:ext:ks} in the version of Corollary~\ref{cor:l0:bipolar:meas} in the proof of Theorem~\ref{thm:optimal:transport} below. Then, for $X\in L^0_{c+}$, we consider the problem
\begin{equation} \label{eq:optimal:whatever}
    \int X d \mu  \to \max \quad \text{subject to}\quad \mu\in \mathcal{M}
\end{equation}
where $\mathcal{M}\subseteq ca_{c+}(\Omega) $ is the set of finite measures $\mu$ on $(\Omega, \mathcal F)$ such that  the marginals satisfy $\mu[ \, \cdot \times \Omega_2] \in \mathcal{M}_1$ and  $\mu[\Omega_1 \times \cdot \, ] \in \mathcal{M}_2$. The dual problem to \eqref{eq:optimal:whatever} is given by 
\begin{equation} \label{eq:optimal:transport:dual}
    \sup_{\mu_1 \in \mathcal{M}_1} \int X_1 d\mu_1 + \sup_{\mu_2 \in \mathcal{M}_2} \int X_2 d\mu_2 \to \min \quad \text{subject to} \quad (X_1, X_2) \in \Psi_X
\end{equation}
where
\begin{equation*}
    \Psi_X := \{(X_1, X_2) \in L^0_{c_1+}(\Omega_1) \times L^0_{c_2+}(\Omega_2) \colon X \preccurlyeq_{\mathcal{P}} X_1 \oplus X_2\}.
\end{equation*}
Suppose that the problem \eqref{eq:optimal:whatever} is non-trivial in the sense that $\sup_{\mu\in \mathcal{M}}\int X d\mu>0$. Further, suppose that \eqref{eq:optimal:whatever} is well-posed in the sense that $\sup_{\mu\in \mathcal{M}}\int X d\mu<\infty$. Then, after a suitable normalization, we may assume that $\sup_{\mu\in \mathcal{M}}\int X d\mu=1$. Hence, $X$ is an element of the following set
\begin{equation*}
    \mathcal{D} := \bigg\{Y\in L^0_{c+}(\Omega) \colon \sup_{\mu\in \mathcal{M}} \int Y d\mu\leq 1\bigg\}.
\end{equation*}
Consider the set
\begin{equation} \label{eq:C}
    \mathcal{C} := \bigg\{Y \in L^0_{c+}(\Omega) \colon \exists (Y_1, Y_2) \in \Psi_Y \ \sup_{\mu_1 \in \mathcal{M}_1} \int Y_1 d\mu_1 + \sup_{\mu_2 \in \mathcal{M}_2} \int Y_2 d\mu_2 \leq 1\bigg\}.
\end{equation}
By monotonicity of the integral, we have that $\mathcal{C} \subseteq \mathcal{D}$. If we are able to show that $\mathcal{C}=\mathcal{D}$, then there is $(X_1,X_2)\in \Psi_X$ such that
\begin{equation*}
    1 \geq \sup_{\mu_1 \in \mathcal{M}_1} \int X_1 d\mu_1 + \sup_{\mu_2 \in \mathcal{M}_2} \int X_2 d\mu_2 \geq \sup_{\mu \in \mathcal{M}} \int X d\mu = 1.
\end{equation*}
In other words, the dual problem \eqref{eq:optimal:transport:dual} admits a solution $(X_1,X_2)$ and there is no duality gap, i.e., $$\min_{(X_1,X_2)\in \Psi_X}\sup_{\mu_1\in \mathcal{M}_1} \int X_1 d\mu_1 + \sup_{\mu_2\in \mathcal{M}_2}\int X_2 d\mu_2  = \sup_{\mu\in \mathcal{M}}\int X d\mu. $$

\begin{thm} \label{thm:optimal:transport}
    $\mathcal{C}^{\circ \circ}_{ca} = \mathcal{D}^{\circ \circ}_{ca} = \mathcal{D}$. $\mathcal{C}=\mathcal{D}$ if and only if $\mathcal{C}$ is $\mathcal{P}$-sensitive and sequentially order closed.    
\end{thm}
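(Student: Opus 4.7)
The plan is to deduce everything from Corollary~\ref{cor:l0:bipolar:meas} (applied to $\mathcal{D}$ and to $\mathcal{C}$) combined with Lemma~\ref{lem:polar:aux} (applied to $\mathcal{D}^\circ_{ca}$ and $\mathcal{C}^\circ_{ca}$). First I would check that $\mathcal{D}$ satisfies the hypotheses of Corollary~\ref{cor:l0:bipolar:meas}: convexity and solidness are immediate from the defining inequalities, sequential order closedness follows from monotone convergence applied to each $\mu \in \mathcal{M}\subset ca_{c+}(\Omega)$, and $\mathcal{P}$-sensitivity is exactly Lemma~\ref{lem:Psensi:dual} with $\mathcal{M}$ as the defining family of measures. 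Corollary~\ref{cor:l0:bipolar:meas} then yields $\mathcal{D}=\mathcal{D}^{\circ\circ}_{ca}$.

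Next I would establish $\mathcal{C}^{\circ\circ}_{ca}=\mathcal{D}^{\circ\circ}_{ca}$. The inclusion $\mathcal{C}^{\circ\circ}_{ca}\subset \mathcal{D}^{\circ\circ}_{ca}$ follows from $\mathcal{C}\subset \mathcal{D}$ by taking polars twice. For the converse it suffices to prove $\mathcal{D}^{\circ}_{ca}\subset \mathcal{C}^{\circ}_{ca}$, and by Lemma~\ref{lem:polar:aux} applied to $\mathcal{D}$ it is enough to show $\mathcal{M}\subset \mathcal{C}^{\circ}_{ca}$ (since $\mathcal{C}^{\circ}_{ca}$ is automatically convex, solid, and $\sigma(ca_c,L^\infty_c)$-closed by Corollary~\ref{cor:l0:bipolar:meas}). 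Given $\mu\in\mathcal{M}$ with marginals $\mu_i\in \mathcal{M}_i$, and $Y\in\mathcal{C}$ with witness $(Y_1,Y_2)\in\Psi_Y$, the fact that $\mu\ll\mathcal{P}$ turns the $\mathcal{P}$-q.s.\ inequality $Y\preccurlyeq Y_1\oplus Y_2$ into a $\mu$-a.e.\ inequality, and classical Fubini (valid for nonnegative integrands and finite marginals) gives
\begin{equation*}
\int Y\,d\mu \;\leq\; \int (Y_1\oplus Y_2)\,d\mu \;=\; \int Y_1\,d\mu_1+\int Y_2\,d\mu_2 \;\leq\; \sup_{\nu_1\in\mathcal{M}_1}\!\!\int Y_1\,d\nu_1+\sup_{\nu_2\in\mathcal{M}_2}\!\!\int Y_2\,d\nu_2\;\leq\;1,
\end{equation*}
so $\mu\in\mathcal{C}^{\circ}_{ca}$, completing the chain $\mathcal{C}^{\circ\circ}_{ca}=\mathcal{D}^{\circ\circ}_{ca}=\mathcal{D}$.

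For the equivalence $\mathcal{C}=\mathcal{D}\Longleftrightarrow \mathcal{C}$ is $\mathcal{P}$-sensitive and sequentially order closed, the ``only if'' direction is immediate because $\mathcal{D}$ has both properties by Step~1. For the ``if'' direction I would first verify that $\mathcal{C}$ is automatically convex and solid: solidness is transparent since any witness $(Y_1,Y_2)\in\Psi_Y$ also lies in $\Psi_Z$ whenever $0\preccurlyeq Z\preccurlyeq Y$, while convexity follows from positive homogeneity and subadditivity of $Y_i\mapsto \sup_{\mu_i\in\mathcal{M}_i}\int Y_i\,d\mu_i$ applied to convex combinations of witnesses. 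With $\mathcal{C}$ thus convex, solid, sequentially order closed and $\mathcal{P}$-sensitive, Corollary~\ref{cor:l0:bipolar:meas} yields $\mathcal{C}=\mathcal{C}^{\circ\circ}_{ca}$, which together with the equality of bipolars proved above gives $\mathcal{C}=\mathcal{D}$.

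The only nontrivial point I expect is the inclusion $\mathcal{M}\subset\mathcal{C}^{\circ}_{ca}$: one must be careful that the $\mathcal{P}$-q.s.\ inequality defining $\Psi_Y$ really transfers to $\mu$-a.e., which relies on $\mathcal{M}\subset ca_c(\Omega)$, and that Fubini is legitimate for $\mu$, which is guaranteed by the finiteness of the marginals $\mu_i\in\mathcal{M}_i\subset ca_{c_i+}(\Omega_i)$ and the nonnegativity of $Y_1,Y_2$. Everything else is a bookkeeping application of the already-established bipolar machinery.
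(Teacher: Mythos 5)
Your argument for the first equality has the polarity direction reversed at the decisive step, and as a result the substantive half of the statement is never proved. From $\mathcal{C}\subset\mathcal{D}$ one gets, for free, both $\mathcal{D}^{\circ}_{ca}\subset\mathcal{C}^{\circ}_{ca}$ and $\mathcal{C}^{\circ\circ}_{ca}\subset\mathcal{D}^{\circ\circ}_{ca}$. To obtain the missing inclusion $\mathcal{D}=\mathcal{D}^{\circ\circ}_{ca}\subset\mathcal{C}^{\circ\circ}_{ca}$ you need the \emph{opposite} containment of the polars, namely $\mathcal{C}^{\circ}_{ca}\subset\mathcal{D}^{\circ}_{ca}$: polarity is order-reversing, so your claimed sufficient condition $\mathcal{D}^{\circ}_{ca}\subset\mathcal{C}^{\circ}_{ca}$ only reproduces $\mathcal{C}^{\circ\circ}_{ca}\subset\mathcal{D}^{\circ\circ}_{ca}$, which you already had. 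Your computation that $\mathcal{M}\subset\mathcal{C}^{\circ}_{ca}$ is correct (it is essentially the easy estimate in the first half of the paper's Lemma~\ref{lem:aux:mass}), but feeding it into the minimality statement of Lemma~\ref{lem:polar:aux} again yields only $\mathcal{D}^{\circ}_{ca}\subset\mathcal{C}^{\circ}_{ca}$; the whole second step of your plan therefore establishes nothing beyond the trivial inclusion.

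What is actually needed---and what the paper supplies in Lemma~\ref{lem:aux:mass} and Corollary~\ref{cor:D:C:polar}---is the reverse inclusion $\mathcal{C}^{\circ}_{ca}\subset\mathcal{M}$, which together with $\mathcal{M}\subset\mathcal{D}^{\circ}_{ca}\subset\mathcal{C}^{\circ}_{ca}$ forces $\mathcal{C}^{\circ}_{ca}=\mathcal{M}=\mathcal{D}^{\circ}_{ca}$ and hence equality of the bipolars. Concretely, for $\mu\in\mathcal{C}^{\circ}_{ca}$ one must show that the marginals satisfy $\mu_i\in\mathcal{M}_i$. This is exactly where the standing hypothesis $\mathcal{M}_i=\mathcal{C}_{i,ca}^{\circ}$ with $\mathcal{C}_i=\mathcal{C}_{i,ca}^{\circ\circ}$ enters: for $X_1\in\mathcal{C}_1$ the element $X_1\oplus 0$ belongs to $\mathcal{C}$ (witness $(X_1,0)\in\Psi_{X_1\oplus 0}$ with $\sup_{\nu_1\in\mathcal{M}_1}\int X_1\,d\nu_1\leq 1$ precisely because $X_1\in\mathcal{C}_{1,ca}^{\circ\circ}$), so $\int X_1\,d\mu_1=\int (X_1\oplus 0)\,d\mu\leq 1$ for all $X_1\in\mathcal{C}_1$, i.e.\ $\mu_1\in\mathcal{C}_{1,ca}^{\circ}=\mathcal{M}_1$, and likewise for $\mu_2$. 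Without this step the equality $\mathcal{C}^{\circ\circ}_{ca}=\mathcal{D}$ is not established. The remainder of your proposal---the bipolar representation $\mathcal{D}=\mathcal{D}^{\circ\circ}_{ca}$, the verification that $\mathcal{C}$ is convex and solid, and the deduction of the equivalence in the second assertion from Corollary~\ref{cor:l0:bipolar:meas}---is correct and coincides with the paper's argument.
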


Before we prove Theorem~\ref{thm:optimal:transport}, consider the following auxiliary lemma.

\begin{lem} \label{lem:aux:mass}
    For $\mu\in ca_{c+}(\Omega)$ we denote by $\mu_1[\, \cdot \,] := \mu[\, \cdot \times \Omega_2] \in ca_{c_1+}(\Omega_1)$ and $\mu_2[\, \cdot \,] =\mu[\Omega_1 \times \cdot \,] \in ca_{c_2+}(\Omega_2)$ the corresponding marginal distributions. Then, 
    \begin{equation} \label{eq:mass:1}
        \sup_{X \in \mathcal{C}} \int X d\mu = \max_{i \in \{1, 2\}} \sup_{X_{i} \in \mathcal{C}_{i}} \int X_{i} d\mu_{i}.
    \end{equation}
    Consequently,
    \begin{equation*}
        \mathcal{C}_{ca}^{\circ} = \big\{\mu \in ca_{c+}(\Omega) \colon \mu_{i} \in \mathcal{M}_{i}, \, i \in \{1, 2\}\big\}=\mathcal{M}.
    \end{equation*}
\end{lem}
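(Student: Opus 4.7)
My plan is to establish \eqref{eq:mass:1} first; the identity $\mathcal{C}_{ca}^\circ=\mathcal{M}$ will then follow as a direct corollary. The two key inputs are: (i) by Corollary~\ref{cor:l0:bipolar:meas}, the standing assumptions on $\mathcal{C}_i$ yield $\mathcal{C}_i=\mathcal{C}_{i,ca}^{\circ\circ}=\{Y_i\in L^0_{c_i+}(\Omega_i)\mid \sup_{\nu_i\in\mathcal{M}_i}\int Y_id\nu_i\leq 1\}$; and (ii) the elementary marginal identity $\int(X_1\oplus X_2)d\mu=\int X_1d\mu_1+\int X_2d\mu_2$ for any $X_i\in L^0_{c_i+}(\Omega_i)$, combined with monotonicity of the integral with respect to the $\mathcal{P}$-q.s.\ order.

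For ``$\geq$'' in \eqref{eq:mass:1}, I would fix $X_1\in\mathcal{C}_1$ and lift it to $Y:=X_1\oplus 0\in L^0_{c+}(\Omega)$; the decomposition $(X_1,0)$ lies in $\Psi_Y$, and the defining sum condition of $\mathcal{C}$ is satisfied since $\sup_{\nu_1\in\mathcal{M}_1}\int X_1d\nu_1\leq 1$ by (i) and the second summand vanishes. Thus $Y\in\mathcal{C}$ with $\int Yd\mu=\int X_1d\mu_1$; taking the supremum over $X_1\in\mathcal{C}_1$ and then symmetrically over $X_2\in\mathcal{C}_2$ via $0\oplus X_2$ gives the lower bound. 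For ``$\leq$'', fix $X\in\mathcal{C}$ with a decomposition $(X_1,X_2)\in\Psi_X$, and set $a_i:=\sup_{\nu_i\in\mathcal{M}_i}\int X_id\nu_i$ (so $a_1+a_2\leq 1$) and $A_i:=\sup_{Y_i\in\mathcal{C}_i}\int Y_id\mu_i$. If $\max(A_1,A_2)=\infty$ there is nothing to prove; otherwise the core step is the scaling estimate $\int X_id\mu_i\leq a_iA_i$. When $a_i>0$, this follows from $X_i/a_i\in\mathcal{C}_i$ (by (i)); when $a_i=0$, the cone property $tX_i\in\mathcal{C}_i$ for all $t>0$ gives $\int X_id\mu_i\leq A_i/t$, so letting $t\to\infty$ yields $\int X_id\mu_i=0$. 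Combining monotonicity with the marginal identity and the elementary inequality $a_1A_1+a_2A_2\leq(a_1+a_2)\max(A_1,A_2)\leq\max(A_1,A_2)$ then gives $\int Xd\mu\leq\max(A_1,A_2)$, completing \eqref{eq:mass:1}.

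The identification $\mathcal{C}_{ca}^\circ=\mathcal{M}$ is then immediate: $\mu\in\mathcal{C}_{ca}^\circ$ iff $\sup_{X\in\mathcal{C}}\int Xd\mu\leq 1$, which by \eqref{eq:mass:1} is equivalent to $\sup_{X_i\in\mathcal{C}_i}\int X_id\mu_i\leq 1$ for both $i\in\{1,2\}$, i.e.\ $\mu_i\in\mathcal{C}_{i,ca}^\circ=\mathcal{M}_i$, which is precisely the definition of $\mu\in\mathcal{M}$. The main obstacle I anticipate is the degenerate case $a_i=0$ in the scaling estimate, where one cannot normalise $X_i$ to land in $\mathcal{C}_i$ by a single scalar multiple; this is resolved cleanly by exploiting that $\mathcal{C}_i$ is stable under arbitrary positive rescalings of any element whose polar value vanishes, which is exactly the content of the characterisation in (i).
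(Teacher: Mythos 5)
Your proposal is correct and follows essentially the same route as the paper's proof: the lower bound via the lifts $X_1\oplus 0$ and $0\oplus X_2$, the upper bound via the marginal identity and the normalisation $X_i/a_i\in\mathcal{C}_i$ justified by $\mathcal{C}_i=\mathcal{C}_{i,ca}^{\circ\circ}$, and the polar identification as an immediate consequence of \eqref{eq:mass:1}. The only (immaterial) difference is the degenerate case $a_i=0$, which you resolve by sending $t\to\infty$ along the ray $tX_i\in\mathcal{C}_i$ to conclude $\int X_i\,d\mu_i=0$, whereas the paper sends $t\to 0$ with $X_i/t\in\mathcal{C}_i$ to obtain the bound $(1+t)\max_i A_i$; both exploit the same scaling stability of $\mathcal{C}_i$.
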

\begin{proof}
    Consider $X \in \mathcal{C}$ and let $(X_{1}, X_{2}) \in \Psi_X$ such that
    \begin{equation*}
        \sup_{\nu_1 \in \mathcal{M}_1} \int X_1 d\nu_1 + \sup_{\nu_2 \in \mathcal{M}_2} \int X_2 d\nu_2 \leq 1.
    \end{equation*}
    Suppose that $\sup_{\nu_i\in \mathcal{M}_i}\int X_id\nu_i>0$, $i=1,2$, then 
    \begin{align*}
        \int X d\mu &\leq \int X_{1} \oplus X_{2} d\mu \\ &= \int X_{1} d\mu_{1} + \int X_{2} d\mu_{2} \\
        &= \sup_{\nu_1\in \mathcal{M}_1}\int X_1d\nu_1 \int \frac{X_{1}}{\sup\limits_{\nu_1\in \mathcal{M}_1}\int X_1d\nu_1} d\mu_{1} + \sup_{\nu_2\in \mathcal{M}_2}\int X_2d\nu_2\int \frac{X_{2}}{\sup\limits_{\nu_2\in \mathcal{M}_2}\int X_2d\nu_2} d\mu_{2} \\
        &\leq  \sup_{\nu_1\in \mathcal{M}_1}\int X_1d\nu_1  \sup_{Y_{1} \in \mathcal{C}_{1}} \int Y_{1} d\mu_{1} + \sup_{\nu_2\in \mathcal{M}_2}\int X_2d\nu_2  \sup_{Y_{2} \in \mathcal{C}_{2}} \int Y_{2} d\mu_{2} \\
        &\leq \max_{i \in \{1, 2\}} \sup_{Y_{i} \in \mathcal{C}_{i}} \int Y_{i} d\mu_{i}
    \end{align*}
    where, for the second inequality, we used Corollary~\ref{cor:l0:bipolar:meas} to infer that
    \begin{equation*}
        \frac{X_{i}}{\sup\limits_{\nu_i \in \mathcal{M}_i} \int X_{i} d\nu_i} \in \mathcal{C}_{i, ca}^{\circ\circ} = \mathcal{C}_i, \quad i=1, 2.
    \end{equation*}
    If $\sup_{\nu_i\in \mathcal{M}_i}\int X_id\nu_i=0$, then $X_{i}/t \in \mathcal{C}_{i}$ for all $t > 0$ by Corollary~\ref{cor:l0:bipolar:meas}. Without loss of generality assume now that $\sup_{\nu_1\in \mathcal{M}_1}\int X_1 d\nu_1 = 0$. We then have that $\sup_{\nu_2\in \mathcal{M}_2}\int X_2 d\nu_2 \leq 1$ and therefore $X_{2} \in \mathcal{C}^{\circ\circ}_{2, ca}=\mathcal{C}_{2}$  (see Corollary~\ref{cor:l0:bipolar:meas}). Thus, for all $t > 0$,
    \begin{align*}
        \int X d\mu &\leq  \int X_{1} d\mu_{1} + \int X_{2} d\mu_{2}\\ & = t \int \frac{1}{t} X_{1} d\mu_{1} + \int X_{2} d\mu_{2} \\
        &\leq t \sup_{Y_{1} \in \mathcal{C}_{1}} \int Y_{1} d\mu_{1} + \sup_{Y_{2} \in \mathcal{C}_{2}} \int Y_{2} d\mu_{2} \\
        &\leq (1 + t) \max_{i \in \{1, 2\}} \sup_{Y_{i} \in \mathcal{C}_{i}} \int Y_{i} d\mu_{i}.
    \end{align*}
    Letting $t\to 0$ shows that indeed $\int X d\mu\leq \max_{i \in \{1, 2\}} \sup_{Y_{i} \in \mathcal{C}_{i}} \int Y_{i} d\mu_{i}$. Hence, we obtain
    \begin{equation*}
        \sup_{X \in \mathcal{C}} \int X d\mu \leq  \max_{i \in \{1, 2\}} \sup_{X_{i} \in \mathcal{C}_{i}} \int X_{i} d\mu_{i}.
    \end{equation*} 
    In order to show the reverse inequality, for $X_1\in \mathcal{C}_1$ let $X:= X_{1} \oplus 0$ and for $X_2\in \mathcal{C}_2$ let $\tilde{X}=0\oplus X_2$. Then, using the fact that $C_i=C_{i,ca}^{\circ\circ}$ by Corollaries~\ref{cor:them:KS:ext} and \ref{cor:l0:bipolar:meas} another time, we infer that $X,\tilde X \in \mathcal{C}$. Moreover,
    \begin{equation*}
        \int X_{1} d\mu_{1} = \int X d\mu \leq \sup_{Y \in \mathcal{C}} \int Y d\mu \quad \text{and} \quad \int X_{2} d\mu_{2} = \int \tilde{X} d\mu \leq \sup_{Y \in \mathcal{C}} \int Y d\mu.
    \end{equation*}
    It follows that
    \begin{equation*}
        \max_{i \in \{1, 2\}}   \sup_{X_{i} \in \mathcal{C}_{i}} \int X_{i} d\mu_{i} \leq \sup_{X \in \mathcal{C}} \int X d\mu,
    \end{equation*}
    and thus \eqref{eq:mass:1} is proved. Finally,
    \begin{align*}
        \mathcal{C}_{ca}^{\circ} &= \bigg\{\mu \in ca_{c+}(\Omega) \colon \forall X \in \mathcal{C} \ \int X d\mu \leq 1\bigg\} \\
        &= \bigg\{\mu \in ca_{c+}(\Omega) \colon \sup_{X \in \mathcal{C}} \int X d\mu \leq 1\bigg\} \\
        &= \bigg\{\mu \in ca_{c+}(\Omega) \colon \max_{i \in \{1, 2\}} \sup_{X_{i} \in \mathcal{C}_{i}} \int X_{i} d\mu_{i} \leq 1\bigg\} \\
        &= \big\{\mu \in ca_{c+}(\Omega) \colon \mu_{i} \in \mathcal{C}_{i, ca}^{\circ}, \, i \in \{1, 2\}\big\}=\mathcal{M}.
    \end{align*}    
\end{proof}  

\begin{cor} \label{cor:D:C:polar}
    $\mathcal{C}_{ca}^{\circ} =\mathcal{M}=\mathcal{D}^\circ_{ca}$. 
\end{cor}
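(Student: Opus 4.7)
The first equality $\mathcal{C}^\circ_{ca}=\mathcal{M}$ is already established in Lemma~\ref{lem:aux:mass}, so my plan focuses on the second equality $\mathcal{D}^\circ_{ca}=\mathcal{M}$. By construction $\mathcal{D}=\{Y\in L^0_{c+}(\Omega)\mid \forall \mu\in \mathcal M\colon \int Y\,d\mu\leq 1\}$, which is exactly the setting of Lemma~\ref{lem:polar:aux} applied to the set $\mathcal{M}\subset ca_{c+}(\Omega)$. That lemma then identifies $\mathcal{D}^\circ_{ca}$ as the smallest convex, solid, $\sigma(ca_c(\Omega),L^\infty_c(\Omega))$-closed subset of $ca_{c+}(\Omega)$ containing $\mathcal{M}$. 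So the task reduces to showing that $\mathcal{M}$ itself already enjoys these three properties, in which case $\mathcal{D}^\circ_{ca}=\mathcal{M}$.

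To verify these properties I would lift them from the marginals. By hypothesis $\mathcal{M}_i=\mathcal{C}_{i,ca}^\circ$ for suitable $\mathcal{C}_i$, so Corollary~\ref{cor:l0:bipolar:meas} guarantees that each $\mathcal{M}_i$ is convex, solid, and $\sigma(ca_{c_i}(\Omega_i),L^\infty_{c_i}(\Omega_i))$-closed. Convexity of $\mathcal{M}$ is immediate from the linearity of the marginal maps $\mu\mapsto\mu_i$. Solidness follows similarly: if $\nu,\mu\in ca_{c+}(\Omega)$ satisfy $\nu\leq \mu$ pointwise on $\mathcal F$ and $\mu\in\mathcal M$, then $\nu_i(A)=\nu(A\times\Omega_2)\leq\mu_i(A)$ (and analogously for $i=2$), so solidness of $\mathcal{M}_i$ forces $\nu_i\in\mathcal{M}_i$, hence $\nu\in\mathcal{M}$.

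The main obstacle is the closedness assertion, which I would handle as follows. For $X_1\in L^\infty_{c_1}(\Omega_1)$ with representative $f_1$, define $\widehat{f_1}(\omega_1,\omega_2):=f_1(\omega_1)$. This is bounded and $\mathcal F$-measurable, and the equivalence class $\widehat{X_1}:=[\widehat{f_1}]_c\in L^\infty_c(\Omega)$ is well-defined: if $N\in\mathcal F_1$ is $\mathcal{P}_1$-polar, then for every $P\in\mathcal P$ the marginal $P(\cdot\times\Omega_2)$ lies in $\mathcal{P}_1$, so $P(N\times\Omega_2)=0$, meaning $N\times\Omega_2$ is $\mathcal P$-polar. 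Moreover, by the definition of marginals, $\int \widehat{X_1}\,d\mu=\int X_1\,d\mu_1$ for every $\mu\in ca_{c+}(\Omega)$, and analogously in the second coordinate. Hence if a net $(\mu^\alpha)_{\alpha\in I}\subset\mathcal{M}$ converges to some $\mu\in ca_c(\Omega)$ in $\sigma(ca_c(\Omega),L^\infty_c(\Omega))$, then $\mu_i^\alpha\to\mu_i$ in $\sigma(ca_{c_i}(\Omega_i),L^\infty_{c_i}(\Omega_i))$ for $i=1,2$, and the closedness of each $\mathcal{M}_i$ yields $\mu_i\in\mathcal{M}_i$. Since $\mu\in ca_{c+}(\Omega)$ (the positive cone is $\sigma$-closed, as can be tested against indicator elements of $L^\infty_c$), we conclude $\mu\in\mathcal{M}$.

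Combining these observations, $\mathcal{M}$ is convex, solid, and $\sigma(ca_c,L^\infty_c)$-closed, hence Lemma~\ref{lem:polar:aux} gives $\mathcal{D}^\circ_{ca}=\mathcal{M}$. Together with Lemma~\ref{lem:aux:mass}, this proves $\mathcal{C}^\circ_{ca}=\mathcal{M}=\mathcal{D}^\circ_{ca}$.
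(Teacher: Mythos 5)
Your proof is correct, but for the second equality it takes a genuinely different route from the paper. The paper's argument is a short sandwich: from the definition of $\mathcal{D}$ every $\mu\in\mathcal{M}$ lies in $\mathcal{D}^{\circ}_{ca}$, the inclusion $\mathcal{C}\subset\mathcal{D}$ and antitonicity of the polar give $\mathcal{D}^{\circ}_{ca}\subset\mathcal{C}^{\circ}_{ca}$, and Lemma~\ref{lem:aux:mass} closes the loop with $\mathcal{C}^{\circ}_{ca}=\mathcal{M}$; no structural properties of $\mathcal{M}$ beyond its membership in the polar are needed. You instead invoke the minimality statement of Lemma~\ref{lem:polar:aux} and verify directly that $\mathcal{M}$ is convex, solid, and $\sigma(ca_c,L^\infty_c)$-closed, the last point via the lift $X_1\mapsto\widehat{X_1}$ of bounded test functions from the factor spaces (which is legitimate: $\mathcal{P}_i$-polar sets lift to $\mathcal{P}$-polar cylinders because the marginals of every $P\in\mathcal{P}$ lie in $\mathcal{P}_i$, and $\int\widehat{X_1}\,d\mu=\int X_1\,d\mu_1$ makes the marginal maps weak-weak continuous). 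Your route is heavier but yields a genuine by-product --- the weak closedness, solidness, and convexity of the constraint set $\mathcal{M}$ itself --- whereas the paper's sandwich is shorter and avoids the topological argument entirely by exploiting that both polars are squeezed between $\mathcal{M}$ and $\mathcal{C}^{\circ}_{ca}=\mathcal{M}$. Note also that your appeal to Lemma~\ref{lem:polar:aux} tacitly uses $\mathcal{M}\neq\emptyset$; this holds since the zero measure belongs to each $\mathcal{M}_i=\mathcal{C}^{\circ}_{i,ca}$, but it is worth stating.
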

\begin{proof}
    $\mathcal{C} \subseteq \mathcal{D}$, Lemma~\ref{lem:polar:aux}, and the definition of $\mathcal D$ imply that $\mathcal{M}\subseteq \mathcal{D}_{ca}^{\circ}\subseteq \mathcal{C}_{ca}^{\circ}$. According to Lemma~\ref{lem:aux:mass}, $\mathcal{M}=\mathcal{C}^\circ_{ca}$. 
\end{proof}
    
\begin{proof}[Proof of Theorem~\ref{thm:optimal:transport}]
    $\mathcal D$ is non-empty, convex, solid, $\mathcal P$-sensitive, and sequentially order closed by definition (see also Proposition~\ref{prop:pSensi}). By Corollary~\ref{cor:l0:bipolar:meas}, it thus holds that $\mathcal{D} = \mathcal{D}^{\circ \circ}_{ca}$. $\mathcal{C}^{\circ \circ}_{ca}=\mathcal{D}^{\circ \circ}_{ca}=\mathcal{D}$ then follows from Corollary~\ref{cor:D:C:polar}. \\    
    Clearly, if $\mathcal{C}=\mathcal{D}$, then $\mathcal{C}$ inherits the properties $\mathcal{P}$-sensitivity and sequential order closedness from $\mathcal{D}$. Now suppose that $\mathcal{C}$ is $\mathcal P$-sensitive and sequentially order closed. It is clear that $\mathcal{C}$ is also non-empty, convex, and solid.  Hence, by Corollary~\ref{cor:l0:bipolar:meas}, we have $\mathcal{C} = \mathcal{C}_{ca}^{\circ\circ}$.
\end{proof}

The following examples illustrate Theorem~\ref{thm:optimal:transport}. By definition of the set $\mathcal{C}$ it is clear that, in general, $\mathcal{P}$-sensitivity and sequential order closedness may be challenging to verify. In Example~\ref{ex:mass:1} finiteness of one of the spaces $\Omega_i$ will make this possible. In Example~\ref{ex:mass:2}, we give an example where $\mathcal{C}=\mathcal{D}$ is easily directly verified, while showing that $\mathcal{C}$ in its representation \eqref{eq:C} is sequentially order closed and $\mathcal{P}$-sensitive seems more challenging. 

However, in any case  Theorem~\ref{thm:optimal:transport} is interesting from a structural point of view. It shows that if things behave nicely---that is, the dual problem admits solutions for all $X\in L^0_{c+}$ such that $\sup_{\mu \in \mathcal{M}}\int X d\mu\leq 1$ and there is no duality gap---then this requires $\mathcal{P}$-sensitivity of $\mathcal{C}$ (and sequential order closedness), again highlighting $\mathcal{P}$-sensitivity as a structural condition which makes robust models manageable. 

\begin{exmp} \label{ex:mass:1}
    Let
    \begin{equation*}
        \mathcal{C}_1 := \{X \in L^0_{c_1+}(\Omega_1) \colon X \mathbf{1}_{A_1} = 0\},
    \end{equation*}
    where $A_1 \in \mathcal{F}_1$ satisfies $c_1[A_1] > 0$ and $c_1[A^c_1] > 0$, and let 
    \begin{equation*}
        \mathcal{C}_2 := \{X \in L^0_{c_2+}(\Omega_2) \colon X \preccurlyeq_{\mathcal{P}_2} 1\}.
    \end{equation*}
    Clearly, another representation of $\mathcal{C}_1$ is $\mathcal{C}_1 = \{X \mathbf{1}_{A_1^c} \colon X \in L^0_{c_1+}(\Omega_1)\}$. 
    Since $\mathcal{C}_1$ is a cone, one verifies that
    \begin{equation*}
        \mathcal{M}_1 = \mathcal{C}_{1, ca}^\circ = \{\mu_1 \in ca_{c_1+}(\Omega_1) \colon \mu_1[A^c_1] = 0\}.
    \end{equation*}
    Regarding $\mathcal{M}_2$, we have
    \begin{equation*}
        \mathcal{M}_2 = \mathcal{C}_{2,ca}^\circ = \{\mu_2 \in ca_{c_2+}(\Omega_2) \colon \mu_2[\Omega_2] \leq 1\}.
    \end{equation*}
    Let us, from now on, assume that $\Omega_2$ is finite and that $\mathcal{F}_2$ is the power set of $\Omega_2$. Moreover, we assume that $\mathcal{P}$ is of class (S). We will show that $\mathcal{C}$ is order closed, and thus deduce $\mathcal{P}$-sensitivity by Corollary~\ref{cor:oCLsd:sensi}. To this end, let the net $(Y_\alpha)_{\alpha\in I}\subseteq\mathcal{C}$ satisfy $Y_\alpha \order{c} Y$ where $Y\in L^0_{c}$. 
    Obviously, $0\preccurlyeq_{\mathcal{P}} Y$. Let $(Y_{1,\alpha},Y_{2,\alpha})\in \Psi_{Y_\alpha}$ such that
    \begin{equation} \label{eq:mass:ex2}
        \sup_{\mu_1\in \mathcal{M}_1} \int Y_{1,\alpha} d\mu_1 + \sup_{\mu_2\in \mathcal{M}_2}\int Y_{2,\alpha} d\mu_2 \leq 1.
    \end{equation}
    Since $\mathcal{M}_1$ is a cone, \eqref{eq:mass:ex2} is satisfied only if $Y_{1,\alpha}\in \mathcal{C}_1$, and thus, $\sup_{\mu_1 \in \mathcal{M}_1} \int Y_{1, \alpha} d\mu_1 = 0$. 
    This implies that $Y_{2, \alpha}\in \mathcal{C}_2$. Conversely, any pair $(Y_1,Y_2)\in \mathcal{C}_1\times \mathcal{C}_2$ satisfies \eqref{eq:mass:ex2}. 
    Define (identifying equivalence classes and their representatives)
    \begin{equation*}
        Y_1(\cdot) := \max_{\omega_2 \in \Omega_2} Y(\cdot, \omega_2) \mathbf{1}_{A^c_1} \quad \text{and} \quad Y_2 := \sup_{\alpha \in I} Y_{\alpha, 2}
    \end{equation*}
    in $L^0_{c_1}(\Omega_1)$ and  $L^0_{c_2}(\Omega_2)$, respectively. Then $(Y_1,Y_2)\in \mathcal{C}_1\times \mathcal{C}_2$, and thus \eqref{eq:mass:ex2} is satisfied. Moreover,
    \begin{equation*}
        Y\mathbf{1}_{A_1^c\times \Omega_2} \preccurlyeq_{\mathcal{P}} Y_1\oplus 0 \preccurlyeq_{\mathcal{P}} (Y_1\oplus Y_2)\mathbf{1}_{A_1^c\times \Omega_2},
    \end{equation*}
    and, for $\alpha\in I$,
    \begin{equation*}
        Y_\alpha \mathbf{1}_{A_1\times \Omega_2}\preccurlyeq_{\mathcal{P}} (Y_{1,\alpha} \oplus Y_{2,\alpha})\mathbf{1}_{A_1\times \Omega_2} = (0\oplus Y_{2,\alpha}) \mathbf{1}_{A_1\times \Omega_2} \preccurlyeq_{\mathcal{P}} (0\oplus Y_2) \mathbf{1}_{A_1\times \Omega_2}=(Y_1\oplus Y_2) \mathbf{1}_{A_1\times \Omega_2},
    \end{equation*}
    where we used that $Y_1\mathbf{1}_{A_1}=0$ and $Y_{1,\alpha}\mathbf{1}_{A_1}=0$ for all $\alpha\in I$. We conclude that $Y\preccurlyeq_{\mathcal{P}} Y_1\oplus Y_2$, and therefore $Y\in \mathcal{C}$. Hence, $\mathcal{C}$ is order closed and Theorem~\ref{thm:optimal:transport} yields $\mathcal{D}=\mathcal{C}$. 
\end{exmp}

\begin{exmp} \label{ex:mass:2} 
    Let
    \begin{equation*}
        \mathcal{C}_i := \{X \in L^0_{c_i+}(\Omega_i) \colon X \preccurlyeq_{\mathcal{P}_i} 1\}, \quad i=1, 2.
    \end{equation*}
    One verifies that
    \begin{equation*}
        \mathcal{M}_i = \mathcal{C}_{i,ca}^\circ = \{\mu_i \in ca_{c_i+}(\Omega_i) \colon \mu_i[\Omega] \leq 1\}, \quad i=1, 2.
    \end{equation*}
    Hence,
    \begin{equation*}
        \mathcal{M} = \{\mu\in ca_{c+}(\Omega) \colon \mu[\Omega] \leq 1\},
    \end{equation*}
    and   
    \begin{equation*}
        \mathcal{D} = \{X \in L^0_{c+}(\Omega) \colon X \preccurlyeq_{\mathcal{P}} 1\}.
    \end{equation*}
    Now consider $\mathcal{C}$. Let $Y\in L^0_{c+}$ such that $Y\preccurlyeq_{\mathcal{P}} 1$. Then $Y_i\in L^0_{c_i+}$ given by $Y_i\equiv 1/2$ trivially satisfy $Y\preccurlyeq_{\mathcal{P}} Y_1\otimes Y_2$, that is $(Y_1,Y_2)\in \Psi_Y$, and 
    \begin{equation*}
        \sup_{\mu_1\in \mathcal{M}_1} \int Y_1 d\mu_1 + \sup_{\mu_2\in \mathcal{M}_2}\int Y_2 d\mu_2 = 1.
    \end{equation*}
    Hence, $\mathcal{D}\subseteq \mathcal{C}$, and recalling that $\mathcal{C}\subseteq \mathcal{D}$, we have $\mathcal{D}= \mathcal{C}$. However, concluding this from Theorem~\ref{thm:optimal:transport}, that is  verifying sequential order closedness and $\mathcal{P}$-sensitivity of $\mathcal{C}$ in the representation \eqref{eq:C} does not seem trivial.
\end{exmp}

\printbibliography

\end{document}